\documentclass[11pt,  reqno]{amsart}

\usepackage{amsmath,amssymb,amscd,amsthm,amsxtra, esint}
\usepackage[all]{xy}

\headheight=8pt
\topmargin=0pt
\textheight=624pt
\textwidth=432pt
\oddsidemargin=18pt
\evensidemargin=18pt

\allowdisplaybreaks[2]

\sloppy

\hfuzz  = 0.5cm 


\newtheorem{theorem}{Theorem} [section]

\newtheorem{lemma}[theorem]{Lemma}
\newtheorem{proposition}[theorem]{Proposition}
\newtheorem{remark}[theorem]{Remark}

\newtheorem{definition}[theorem]{Definition}
\newtheorem{corollary}[theorem]{Corollary}


\DeclareMathOperator*{\supp}{supp}




\newcommand{\noi}{\noindent}
\newcommand{\Z}{\mathbb{Z}}
\newcommand{\R}{\mathbb{R}}

\newcommand{\T}{\mathbb{T}}

\newcommand{\N}{\mathcal{N}}
\newcommand{\RR}{\mathcal{R}}

\newcommand{\TT}{\mathcal{T}}

\newcommand{\Nf}{\mathfrak{N}}
\newcommand{\Rf}{\mathfrak{R}}

\newcommand{\eps}{\varepsilon}

\newcommand{\G}{\Gamma}

\newcommand{\ft}{\widehat}
\newcommand{\wt}{\widetilde}
\newcommand{\cj}{\overline}
\newcommand{\dx}{\partial_x}
\newcommand{\dt}{\partial_t}

\newcommand{\jb}[1]
{\langle #1 \rangle}

\numberwithin{equation}{section}
\numberwithin{theorem}{section}

\begin{document}



\title
[Poincar\'e-Dulac normal form for cubic NLS]
{Poincar\'e-Dulac normal form reduction for unconditional  well-posedness  of  
the periodic cubic NLS}

\author{Zihua Guo, Soonsik Kwon, and Tadahiro Oh}

\address{
Zihua Guo\\
School of Mathematics\\
Institute for Advanced Study\\
Einstein Drive\\ Princeton\\ NJ 08540\\ USA
} 

\email{guo@math.ias.edu}
%

\address{
Soonsik Kwon\\
Department of Mathematical Sciences\\
Korea Advanced Institute of Science and Technology\\
335 Gwahangno \\ 
Yuseong-gu, Daejeon 305-701, Republic of Korea}

\email{soonsikk@kaist.edu}

\address{
Tadahiro Oh\\
Department of Mathematics\\
Princeton University\\
Fine Hall\\
Washington Rd.\\
Princeton, NJ 08544-1000, USA}

\email{hirooh@math.princeton.edu}

\thanks{Z.G. is supported by the National Science Foundation under agreement No. DMS-0635607 and The S. S. Chern Fund. Any opinions, findings and conclusions or recommendations expressed in this material are those of the authors and do not necessarily reflect the views of the National Science Foundation or The S. S. Chern Fund.}

\thanks{S.K. is supported in part by NRF grant 2010-0024017.}

\subjclass[2000]{35Q55}

\keywords{NLS; normal form; well-posedness; uniqueness}

\begin{abstract}

We implement an infinite iteration scheme of Poincar\'e-Dulac normal form reductions
to establish an energy estimate on 
the one-dimensional cubic
nonlinear Schr\"odinger equation (NLS) in $C_tL^2(\T)$,
without using any auxiliary function space.
This allows us to construct weak solutions of NLS in $C_tL^2(\T)$ with initial data in $L^2(\T)$
as limits of classical solutions.
As a consequence of our construction, we  
also prove unconditional well-posedness of NLS in $H^s(\T)$
for $s \geq \frac{1}{6}$.

\end{abstract}

\maketitle

\tableofcontents

\section{Introduction}
\label{SEC:1}

\subsection{Nonlinear Schr\"odinger equation}

We consider the Cauchy problem for the cubic nonlinear Schr\"odinger equation
(NLS) on the one-dimensional torus $\T = \R/\Z$:
\begin{equation}
	\label{NLS1} 
	\begin{cases}
		i u_t - u_{xx} = \pm u |u|^2  \\
		u|_{t= 0} = u_0, 
	\end{cases}~(x, t)  \in \T \times \R,
\end{equation}

\noi
where $u$ is a complex-valued function.
In this paper, we study unconditional well-posedness of \eqref{NLS1}. 
One-dimensional cubic NLS is known to be completely integrable.
However, our argument does not use such integrability structure of \eqref{NLS1}
in an explicit manner.
Moreover, due to the local-in-time nature of our argument, 
it does not matter whether the equation is defocusing (with $-$ sign)
or focusing (with $+$ sign.)
Hence, we simply assume that it is defocusing in the following.

In \cite{BO1}, Bourgain introduced a new weighted space time Sobolev space $X^{s, b}$,
whose norm is given by 
\begin{equation} \label{XSB}
\|u\|_{X^{s, b}(\T\times \R)} 
= \| \jb{\dx}^s \jb{i \dt - \dx^2}^b  (u) \|_{L^2 (\T\times \R)},
\end{equation}

\noi
where $\jb{\,\cdot\,} = 1 +|\cdot|$.
After establishing the periodic $L^4$-Strichartz estimate
\begin{equation} \label{L4}
\|u\|_{L^4_{x, t}} \lesssim \|u\|_{X^{0,\frac{3}{8}}}, 
\end{equation}  
Bourgain proved that \eqref{NLS1} is locally well-posed in $L^2(\T)$.
Thanks to the $L^2$-conservation law, 
this immediately implied global well-posedness of \eqref{NLS1} in $L^2(\T)$.
This well-posedness result is known to be sharp
in view of the ill-posedness results
of \eqref{NLS1} in $H^s(\T)$, $s<0$, by
Burq-G\'erard-Tzvetkov \cite{BGT},
Christ-Colliander-Tao \cite{CCT1, CCT2},
and Molinet \cite{MOLI}.
Recently, there have been several studies 
on constructing solutions of (the renormalized version of) \eqref{NLS1} in larger spaces than $L^2(\T)$.
See Christ \cite{CH1} and Colliander-Oh \cite{CO}.

\medskip

Our main goal in this paper is 
twofold.
\begin{itemize}
\item[(a)]
We construct weak solutions of \eqref{NLS1} with $u_0 \in L^2(\T)$,
by directly establishing an energy estimate in $C([0, T]; L^2)$
{\it without} using any auxiliary function space.

\item[(b)]
We establish a uniqueness statement of  solutions of \eqref{NLS1}.
For this part, we assume sufficient regularity on solutions.
In particular, we assume that a solution $u$ is in $C([0, T]; H^s)$ for some $s\geq \frac{1}{6}$.
\end{itemize}

\noi
First, let us discuss what we mean by solutions in $C([0, T]; L^2)$.
For this purpose, we use the following notions from Christ \cite{CH1, CH2}.

\begin{definition} \label{DEF:1} \rm
A sequence of Fourier cutoff operators is a sequence of Fourier multiplier operators $\{T_N\}_{N\in \mathbb{N}}$
on $\mathcal{D'}(\T)$ with multipliers $m_N:\mathbb{Z} \to \mathbb{C}$ such that (i) $m_N$ has a compact support on $\mathbb{Z}$
for each $N \in \mathbb{N}$, (ii) $m_N$ is uniformly bounded, and 
(iii) $m_N$ converges pointwise to $1$, i.e. $\lim_{N\to \infty} m_N(n) = 1$ for any $n \in \mathbb{Z}$.
\end{definition}

\noi
The following definition is in particular important in making sense of the nonlinearity
$\mathcal{N}(u) := u |u|^2$, 
when a function $u$ is merely in $C([0, T]: L^2(\T))$. 

\begin{definition} \label{DEF:2} \rm
Let $u \in C([0, T]; L^2(\T))$. 
We say that $\mathcal{N}(u)$ exists and is equal to a distribution $w \in \mathcal{D}'(\T\times (0, T))$
if for every sequence $\{T_N\}_{N\in \mathbb{N}}$ of (spatial) Fourier cutoff operators, we have
\begin{equation} \label{NON1}
\lim_{N\to \infty} \mathcal{N}(T_N u) = w
\end{equation}

\noi
in the sense of distributions on $\T\times (0, T)$.
\end{definition}

\begin{definition} \label{DEF:3} \rm
We say that $u  \in C([0, T]; H^s(\T))$  is a weak solution of NLS \eqref{NLS1}
in the extended sense
if (i) $u\big|_{t=0} = u_0$, (ii) the nonlinearity $\mathcal{N}(u)$ exists in the sense of Definition \ref{DEF:2},
and (iii) $u$ satisfies \eqref{NLS1}
in the sense of distributions on $\T\times (0, T)$,
where the nonlinearity $\mathcal{N}(u) = u |u|^2$ is interpreted as above. 
\end{definition}

In the following, we construct weak solutions of \eqref{NLS1} with $u_0 \in L^2(\T)$,
by directly establishing an energy estimate in $C([0, T]; L^2)$
without using any auxiliary function space.
Our first result is the following.

\begin{theorem}[Existence] \label{thm0}
Let $s \geq 0$.
Then, for  $u_0 \in H^s(\T)$, there exists a weak solution $u \in C([0, T]; H^s(\T))$ of NLS \eqref{NLS1}
with initial condition $u_0$
in the sense of Definition \ref{DEF:3},
where the time $T$ of existence depends only on $\|u_0\|_{H^s}$.
Moreover, the solution map is  Lipschitz continuous.
\end{theorem}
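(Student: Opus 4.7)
The plan is to construct $u$ as the limit of a sequence of classical (smooth) solutions $u^{(N)}$ to \eqref{NLS1} with regularized initial data $u_0^{(N)} \to u_0$ in $H^s(\T)$, whose existence on a common time interval is guaranteed by Bourgain's $L^2$ well-posedness. The crux is an \emph{a priori} energy estimate, together with a companion difference estimate for two smooth solutions, formulated in $C([0,T]; H^s(\T))$ alone and not invoking the auxiliary $X^{s,b}$ norm \eqref{XSB}. Once such estimates are available, the sequence $\{u^{(N)}\}$ is Cauchy in $C([0,T]; H^s)$ on a time $T = T(\|u_0\|_{H^s})$, its limit satisfies Definition \ref{DEF:3}, and the Lipschitz continuity of the solution map follows from the difference estimate.

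To set up the energy estimate, first pass to the interaction representation
\begin{equation*}
v(n,t) := e^{-i t n^2}\, \ft u(n,t),
\end{equation*}
which turns \eqref{NLS1} into the equation
\begin{equation*}
\dt v(n) = c \sum_{n = n_1 - n_2 + n_3} e^{i t \Phi(n,n_1,n_2,n_3)}\, v(n_1)\, \cj{v(n_2)}\, v(n_3),\qquad \Phi = 2(n-n_1)(n-n_3),
\end{equation*}
for some constant $c \in \{\pm i\}$. Decompose the sum into the resonant set $\{n_1 = n\} \cup \{n_3 = n\}$, which produces a $|v|^2 v$ contribution that can be absorbed by a further gauge change, and its complement, on which $|\Phi| \geq 2$. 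On the non-resonant piece, apply the identity $e^{it\Phi} = (i\Phi)^{-1} \dt e^{it\Phi}$ and integrate by parts in $t$ (the Poincar\'e--Dulac normal form reduction): this gains a factor $\Phi^{-1}$ at the price of a boundary term and a quintic term obtained by substituting $\dt v$ back from the equation. I would then iterate the substitution, so that at the $k$-th step one has a sum of $(2k+1)$-linear ``tree'' expressions indexed by ternary trees of depth $k$ and weighted by products $\prod_j \Phi_j^{-1}$ of inverse phases attached to the internal vertices. The target is a $(2k+1)$-linear bound of the form $C^k \|v\|_{H^s}^{2k+1}$ for each $k$; summing in $k$ should yield a closed energy inequality, and hence a local bound $\|u\|_{C([0,T]; H^s)} \leq 2\|u_0\|_{H^s}$ on a time $T = T(\|u_0\|_{H^s})$.

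The main obstacle is organizing this infinite iteration so that the combinatorial explosion in the number of trees is dominated by the analytic gain coming from the phase factors. The factorization $\Phi = 2(n-n_1)(n-n_3)$ is the essential resource: on the non-resonant regime it furnishes a divisor-type denominator whose summation in $\ell^2$, combined with counting estimates on the convolution constraint $n = n_1 - n_2 + n_3$ (and on the recursively nested constraints attached to each tree vertex), should beat the geometric growth in the number of trees. Once these energy and difference estimates are in place for smooth data, one extracts a limit $u = \lim_N u^{(N)}$ in $C([0,T]; H^s)$, and to check that it qualifies as a weak solution in the sense of Definition \ref{DEF:3} one invokes the normal form expansion once more: it realizes $\N(u)$ as an absolutely convergent series of $(2k+1)$-linear forms each continuous on $L^2$, so that for any sequence of Fourier cutoffs $\{T_M\}$ as in Definition \ref{DEF:1} the convergence $\N(T_M u) \to \N(u)$ holds in $\mathcal{D}'(\T \times (0,T))$, independently of the choice of cutoff.
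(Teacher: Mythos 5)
Your high-level blueprint---interaction representation, differentiation by parts, recursive substitution indexed by ternary trees, divisor counting, and extraction of a limit of smooth approximants---is indeed the paper's strategy, but the sketch skips over two interlocked technical points that are exactly where the argument has to work, and the form of the energy inequality you aim for would not close. First, the combinatorial factor you need to dominate is not geometric. The collection $\mathfrak{T}(J)$ of ordered trees of the $J$th generation has cardinality $c_J = 1\cdot 3\cdot 5\cdots(2J-1)$ (see \eqref{cj1}), and the choice of which of the $2J+1$ terminal leaves carries the next time derivative multiplies this again, so the coefficient in front of the $(2J+1)$-linear term grows factorially in $J$. Even if a bound $C^J\|v\|_{H^s}^{2J+1}$ were available for fixed $C$, summing the series would require $C\|v\|_{H^s}^2 < 1$, i.e.\ small data, which Theorem~\ref{thm0} does not assume. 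The paper's device for killing the factorial is the cascading family of thresholds: at each step the non-resonant piece is retained only where $|\wt{\mu}_{J+1}|\gg (2J+3)^3\max(|\wt{\mu}_J|^{1-\frac{1}{100}},|\mu_1|^{1-\frac{1}{100}})$ as in \eqref{CJ}, the complementary near-resonant piece being estimated directly by divisor counting (Lemma~\ref{LEM:N^J+1_1}). The cubes $(2J+3)^3$ in the thresholds produce, after Cauchy--Schwarz, a factor $\prod_{j\geq 2}(2j+1)^{-\frac32+}$ which beats $c_J$ --- see \eqref{N^J+1_0-3}. Without these growing thresholds your iteration diverges.

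Second, the boundary terms $\N^{(j)}_0(v)(t) - \N^{(j)}_0(v_0)$ in \eqref{42} carry no time integral, so shrinking $T$ gives nothing there; your plan to get a ``local bound on a time $T = T(\|u_0\|_{H^s})$'' conflates the two smallness mechanisms. In the paper, the first differentiation by parts is performed only on $A_N^c$ where $|\Phi|>N$ (not merely $|\Phi|\geq 2$), so the boundary terms are $O(N^{-\frac12+}\|v\|^{2j-1})$; the large parameter $N$ is then fixed as a function of the data size via \eqref{46} and \eqref{49}, and only afterwards is $T=T(N,\|u_0\|)$ taken small via \eqref{48} to control the time-integrated pieces, including the near-resonant $\N_{11}$ term which is \emph{large} of size $N^{\frac12+}$ and is only tamed by $T$. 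Omitting the large cutoff $N$ (as your $|\Phi|\geq 2$ split does) leaves no smallness at all for the boundary terms. A further, more minor, slip: the resonant piece cannot be fully removed by a gauge change; only the $2(\fint|v|^2)v_n$ part of $\RR_1,\RR_2$ is gauge-able, while the $-|v_n|^2 v_n$ part must simply be estimated (Lemma~\ref{LEM:R1}).
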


\begin{remark} \label{REM:1} \rm
In view of the embedding $H^s(\T) \subset L^3(\T)$ for $s\geq \frac{1}{6}$, 
it follows that when $s\geq \frac{1}{6}$, the solution $u$ in Theorem \ref{thm0}
indeed satisfies NLS \eqref{NLS1} in the usual distributional sense 
as a space-time distribution. Moreover, for each fixed $t \in (0, T)$,  
it satisfies the equation \eqref{NLS1} as a spatial distribution on $\T$.
\end{remark}

\noi
In Theorem \ref{thm0}, the uniqueness holds only as a limit of classical solutions.
This in particular implies that the solutions in Theorem \ref{thm0}
coincide with the solutions constructed in Bourgain's $L^2$ well-posedness result 
\cite{BO1}. 
Hence, they lie in the class 
\begin{equation} \label{Bourgain}
C([0,T];L^2(\T))\cap X_T^{0,\frac{3}{8}}
\subset C([0,T];L^2(\T))\cap L^4_{x, T},
\end{equation}

\noi  
where $X_T^{0,\frac{3}{8}}$ denotes the local-in-time version of $X^{0,\frac{3}{8}}$
and $L^4_{x, T} = L^4 (\T\times [0, T])$.

\medskip
Now, let us turn to the uniqueness statement of  solutions of \eqref{NLS1}.
Recall the following definition from Kato \cite{KATO}.
We say that a Cauchy problem is {\it unconditionally well-posed}
in $H^s$
if for every initial condition $u_0 \in H^s$,
there exist $T>0$ and a {\it unique} solution $u \in C([0, T];H^s)$
such that $u(0) = u_0$.
Also, see \cite{FPT}.
We refer to such uniqueness in $C([0, T];H^s)$
without intersecting with any auxiliary function space as {\it unconditional uniqueness}.
Unconditional uniqueness is a concept of uniqueness which does not depend
on how solutions are constructed.

As mentioned above, the $L^2$ well-posedness result 
in \cite{BO1} assumes that solutions are a priori
in $X^{0,\frac{3}{8}}$ (locally in time)
due to the use of the periodic $L^4$ Strichartz estimate \eqref{L4}.
Thus, the uniqueness in \cite{BO1} holds only  in the class \eqref{Bourgain}.
Namely,  the uniqueness of solutions in \cite{BO1} holds {\it conditionally},
since uniqueness may not hold without the restriction 
of the auxiliary function space $X_T^{0,\frac{3}{8}}$.

The proof of  Theorem \ref{thm0} only uses a direct energy estimate in $C([0, T]; L^2)$.
For a general solution $u \in C([0, T]; L^2)$, we need to perform the argument through
an approximating smooth solution $u_n$.
However, if $u \in C([0, T]; H^s)$ for some $s \geq \frac{1}{6}$, we do not need such an approximating sequence
of smooth solutions and directly work on $u$.
This yields the following uniqueness of  solutions to \eqref{NLS1} in $H^s(\T)$, $s \geq \frac{1}{6}$. 

\begin{theorem}[Unconditional uniqueness] \label{thm1}
Let $s \geq \frac{1}{6}$.
Then, for  $u_0 \in H^s(\T)$, 
the solution $u $ with initial condition $u_0$ 
constructed in Theorem \ref{thm0}
is unique in $C([0, T]; H^s(\T))$.
\end{theorem}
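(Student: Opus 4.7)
The plan is to reduce the uniqueness statement to an $L^2$ energy inequality on the difference $w = u-v$ of two candidate solutions, by running the same infinite Poincar\'e-Dulac normal form iteration that drives the existence proof. The crucial gain offered by the hypothesis $s\geq \frac 16$ (and the embedding $H^s(\T)\subset L^3(\T)$ noted in Remark \ref{REM:1}) is that $u$ and $v$ already have enough regularity for the nonlinearity $\N(u),\N(v)$ and every multilinear term produced by the iteration to converge absolutely, so one can work directly with the given rough solutions rather than passing to an approximating sequence of classical solutions as in Theorem \ref{thm0}.

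First, I would pass to the interaction representation $\wt u_n(t) = e^{-in^2 t}\hat u_n(t)$ and similarly for $v$, and Wick-order the nonlinearity so that on the Fourier side the equation for $w$ reads, schematically,
\begin{equation*}
\dt \wt w_n
= \sum_{\substack{n_1-n_2+n_3=n\\ (n_1-n)(n_3-n)\neq 0}}
e^{i\Phi(\bar n)\, t}\,\wt W(n_1,n_2,n_3)
\;+\; \text{(resonant terms)},
\end{equation*}
where $\Phi(\bar n)=-2(n_1-n)(n_3-n)$ and $\wt W$ is trilinear, linear in $\wt w$, with the other two entries drawn from $\wt u$ or $\wt v$. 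The resonant part is cubic in $\wt w$ modulo coefficients of the form $|\wt u_n|^2,|\wt v_n|^2$ and is handled by a direct $L^2$ pairing. The non-resonant part is integrated by parts in time using $e^{i\Phi t}=\frac{1}{i\Phi}\dt e^{i\Phi t}$, which produces a boundary trilinear term (gaining $1/|\Phi|$) and a quintilinear time-integral term involving $\dt\wt u$ or $\dt\wt v$ in one slot. Iterating this step infinitely many times, as in the existence argument, the generic $k$-th term becomes a $(2k+1)$-linear expression weighted by a product of $k$ phase factors $\Phi_j^{-1}$, acting on copies of $u,v$ with exactly one factor of $w$ in a distinguished slot.

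Next, I would perform a pairing with $\wt w_n$ and sum over $n$, and invoke the same multilinear estimates proved earlier in the paper for the existence theorem. The point is that every estimate in the normal form hierarchy is \emph{linear} in the factor $\wt w$ and depends only polynomially on $\|u\|_{L^\infty_T H^s}+\|v\|_{L^\infty_T H^s}=:R$. Thus, summing the series, one obtains an inequality of the form
\begin{equation*}
\tfrac{d}{dt}\|w(t)\|_{L^2}^{2}\;\le\; C(R)\,\|w(t)\|_{L^2}^{2},\qquad t\in[0,T^{*}],
\end{equation*}
for some $T^{*}=T^{*}(R)>0$. Since $w(0)=0$, Gronwall's inequality gives $w\equiv 0$ on $[0,T^{*}]$, and a straightforward continuation argument using the uniform $H^s$-bound of $u$ and $v$ on $[0,T]$ propagates this to all of $[0,T]$.

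The main obstacle is the absolute convergence of the infinite normal form expansion and the justification of the termwise integration by parts \emph{at the exact regularity} $s=\tfrac 16$. In the existence proof the iteration is closed with the help of smoothness of an approximating $u_n$; here one must verify that for $u,v\in C([0,T];H^{1/6})$ every multilinear operator in the series maps into an absolutely summable object, and that time differentiation under the sum is legitimate. The critical bound is the trilinear estimate controlling the quintilinear term produced after one integration by parts, which is precisely where the index $\tfrac16$ appears: a standard Sobolev count on $\sum 1/|(n_1-n)(n_3-n)|$ combined with the embedding $H^{1/6}\subset L^3$ yields the required $L^2\to L^2$ bound with a constant depending only on $R$. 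Once this estimate is in hand, all higher-order terms follow by the same mechanism with improving denominators, and the Gronwall argument above completes the proof.
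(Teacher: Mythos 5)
You have the right overall strategy and correctly locate the heart of the matter: since $H^{1/6}(\T)\hookrightarrow L^3(\T)$, a solution $u\in C([0,T];H^{1/6})$ has enough regularity that the Fourier coefficients $v_n(t)=e^{-in^2 t}\hat u(n,t)$ are $C^1$ in $t$ and the nonlinearity $u|u|^2$ is in $C_tL^1$, so one can run the normal form hierarchy directly on $u$ rather than through smooth approximants. That is exactly the content of Section~\ref{SEC:5}: the paper justifies the exchange of summation and time differentiation (Lemma~\ref{LEM:CONV1}, together with the absolute-convergence bounds from the multilinear lemmata) and then appeals to the Lipschitz estimate \eqref{M3}--\eqref{M4} already available from the existence proof. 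The justification step --- the one you flag as ``the main obstacle'' --- is indeed where the regularity $s\geq\frac16$ enters, and the paper handles it precisely as you anticipate.

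Where your proposal goes astray is the final closing mechanism. You claim the normal form iteration yields a pointwise differential inequality $\frac{d}{dt}\|w(t)\|_{L^2}^2\le C(R)\|w(t)\|_{L^2}^2$, but the normal form does not deliver that. Each integration by parts in time produces a \emph{boundary} term $\N^{(j)}_0(w)(t)-\N^{(j)}_0(w)(0)$ sitting \emph{outside} the time integral; when you pair $\dt\wt w_n$ with $\wt w_n$, the contribution of $\dt\N^{(j)}_0$ is not pointwise estimable by $\|w(t)\|_{L^2}^2$ --- bounding it would require either differentiating the boundary term in $t$ (which reintroduces exactly the cubic term the normal form was invoked to tame) or passing to a modified energy $\mathcal E(t)=\|w(t)\|_{L^2}^2 + \text{corrections}$, which is a genuinely different argument from what you wrote. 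What one actually obtains from the integral formulation \eqref{42} applied to the difference of two solutions with the same data is an inequality of the form
\begin{equation*}
\|w\|_{C_TL^2}\le \varepsilon(N)\,\|w\|_{C_TL^2} + C(R,N)\,T\,\|w\|_{C_TL^2},
\end{equation*}
and after choosing $N$ large and $T$ small this absorbs directly to $\|w\|_{C_TL^2}=0$; no Gronwall step is involved. So replace the claimed differential inequality by the contraction/Lipschitz estimate on the fixed-point map $\G_{v_0}$, and the rest of your outline matches the paper. Two smaller points: the uniqueness argument runs at the $L^2$ level (the multilinear lemmata are stated in $L^2$), with $H^{1/6}$ entering only through the $L^3$ embedding needed for the justification; and the absolute convergence of the infinite series is what makes the termwise differentiation legitimate, which the paper addresses via Lemma~\ref{LEM:CONV1} and the display \eqref{CONV2}--\eqref{CONV3}.
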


\noi
Theorem \ref{thm1} with the $L^2$-conservation law for \eqref{NLS1}
yields  the following corollary.

\begin{corollary} \label{cor1}
 Let $s \geq \frac{1}{6}$.  NLS \eqref{NLS1}  is unconditionally globally well-posed
in $H^s(\T)$.
\end{corollary}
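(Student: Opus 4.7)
The plan is to combine the local theory already packaged in Theorems \ref{thm0} and \ref{thm1} with Bourgain's global $L^2$ well-posedness \cite{BO1} (including persistence of regularity), using the unconditional uniqueness to identify the two families of solutions on their common time interval.

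First I would record local unconditional well-posedness for $s \geq \frac{1}{6}$. Given $u_0 \in H^s(\T)$, Theorem \ref{thm0} produces a weak solution $u \in C([0,T];H^s(\T))$ in the sense of Definition \ref{DEF:3} with $T = T(\|u_0\|_{H^s}) > 0$, together with Lipschitz dependence of the solution map. Theorem \ref{thm1}, applied at the same regularity $s \geq \frac{1}{6}$, asserts that $u$ is the unique element of $C([0,T];H^s(\T))$ satisfying NLS with data $u_0$. Thus on $[0,T]$ the Cauchy problem is unconditionally well-posed in $H^s(\T)$ (existence, unconditional uniqueness, and Lipschitz continuous dependence).

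Next I would upgrade to a global statement using the $L^2$-conservation law $\|u(t)\|_{L^2} = \|u_0\|_{L^2}$. Since $H^s(\T)\subset L^2(\T)$, Bourgain's theorem \cite{BO1} produces a global solution $\tilde u \in C([0,\infty);L^2(\T))$ with $\tilde u|_{[0,T']}\in X^{0,3/8}_{T'}$ on every finite interval $[0,T']$, satisfying $\tilde u(0) = u_0$; moreover, persistence of regularity within Bourgain's framework propagates $H^s$ regularity, so $\tilde u \in C([0,\infty);H^s(\T))$. In particular, $\tilde u$ restricted to $[0,T]$ lies in $C([0,T];H^s(\T))$ and satisfies NLS with data $u_0$ in the strong (hence distributional) sense, which is covered by Definition \ref{DEF:3} (see Remark \ref{REM:1}).

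Applying Theorem \ref{thm1} on $[0,T]$, unconditional uniqueness in $C([0,T];H^s(\T))$ forces $u \equiv \tilde u$ on $[0,T]$. Consequently, the locally defined solution of Theorem \ref{thm0} extends to all of $[0,\infty)$ as $\tilde u$, and the extension is itself the unique element of $C([0,T'];H^s(\T))$ solving NLS with data $u_0$ on every finite $[0,T']$, by reapplying Theorem \ref{thm1} on each such interval. Combined with the Lipschitz continuous dependence inherited from Theorem \ref{thm0} (iterated on successive time intervals of size controlled by the conserved $L^2$ norm and the propagated $H^s$ bound), this yields unconditional global well-posedness in $H^s(\T)$.

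The only nontrivial point is the identification step: one must ensure that Bourgain's global solution $\tilde u$ qualifies as an admissible competitor in Theorem \ref{thm1}, i.e., that it lives in $C_tH^s$ for $s\geq\frac{1}{6}$ and solves the equation in a sense at least as strong as Definition \ref{DEF:3}. Both are standard (the persistence of regularity for the $X^{s,b}$-based fixed point is a direct rerun of Bourgain's contraction at the $H^s$ level, and Remark \ref{REM:1} guarantees compatibility of the notions of solution); once this is in hand the rest is bookkeeping.
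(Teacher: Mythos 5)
Your proof is correct, but it takes a noticeably different route from what the paper intends by the phrase ``Theorem \ref{thm1} with the $L^2$-conservation law.'' The paper's argument is a direct iteration of its own local theory: the construction in Section \ref{SEC:4} yields a local existence time $T \gtrsim (1+\|u_0\|_{L^2})^{-\beta}$ depending only on the $L^2$ norm (the choice $R = \|v_0\|_{L^2}+1$ in \eqref{46}--\eqref{49} carries over to $s>0$ because, as noted at the end of Section \ref{SEC:4}, the $H^s$ weight $\jb{n}^s$ is absorbed onto a single factor, leaving the remaining factors in $L^2$); since $\|u(t)\|_{L^2}$ is conserved, one re-solves from $u(T)$ on an interval of the same length, letting the $H^s$ norm grow at most exponentially, and unconditional uniqueness on each subinterval is Theorem \ref{thm1}. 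You instead invoke Bourgain's global $L^2$ well-posedness and persistence of regularity in $X^{s,b}$, and then identify that solution with the one from Theorem \ref{thm0} via Theorem \ref{thm1}. That is logically sound — Remark \ref{REM:1} makes the two solution notions compatible for $s\geq\frac{1}{6}$, and the identification step you flag is handled correctly — but it reintroduces precisely the auxiliary $X^{s,b}$ machinery the paper is at pains to avoid, making the globalization less self-contained than the paper's direct iteration. Both proofs hinge on the same two ingredients (unconditional uniqueness plus $L^2$ conservation); the paper's version simply stays internal to the energy-method framework, while yours outsources the global existence to Bourgain and uses Theorem \ref{thm1} only for identification.
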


Note that Theorem \ref{thm1} is an 
improvement of Bourgain's result \cite{BO1} in the aspect of uniqueness, 
at least for $s \geq \frac{1}{6}$.
We also point out that, for $s < 0$, there is the non-uniqueness result 
in $C_tH^s(\T)$ by Christ \cite{CH1} 
(for solutions in extended sense as in Definition \ref{DEF:3}.) 
See \cite{CH1} for details.

Many of the unconditional uniqueness results
use some auxiliary function spaces (such as Strichartz spaces and $X^{s, b}$ spaces),
which are designed to be large enough to contain $C([0, T];H^s)$
such that desired nonlinear estimates hold.
See, for instance, Zhou \cite{Z}.
However,  we simply use
the $C([0, T];H^s)$-norm in the proof of Theorem \ref{thm1}.

For $s > \frac{1}{2}$, 
an a priori estimate in $C([0, T];H^s)$
easily follows from Sobolev embedding theorem.
The challenge is to go below this regularity.
We achieve this goal by implementing an infinite iteration scheme
for the {\it Poincar\'e-Dulac normal form reductions.}
See Subsection \ref{SUBSEC:1.2} for 
a discussion on the Poincar\'e-Dulac normal form reduction.
This method provides a new method to prove well-posedness of PDEs.

\medskip

In \cite{CH2}, Christ used the power series argument
to construct solutions to the (renormalized) cubic NLS
in $C([0, T]; \mathcal{F}L^{s,p})$, $s\geq 0$, $p \in [1, \infty)$, where 
the Fourier-Lebesgue space $\mathcal{F}L^{s,p}$ is defined by the norm
$\|f\|_{\mathcal{F}L^{s,p}} = \|\jb{n}^s \ft{f}(n)\|_{\ell^p_n}$.
This argument involves a power series expansion of solutions
in terms of initial data and uses only  the $C([0, T]; \mathcal{F}L^{s,p})$-norm.
However, this construction does  not provide uniqueness.\footnote{Gr\"unrock-Herr \cite{GH} 
proved (conditional) local well-posedness 
in $\mathcal{F}L^{s,p}$, $s\geq 0$, $p \in (2, \infty)$
via the fixed point argument.  The uniqueness in \cite{GH} holds only in
$C([0, T]; \mathcal{F}L^{s,p})$ intersected with the $X^{s, b}$ space adapted to 
$\mathcal{F}L^{s,p}$.}
When $p = 2$, a slight modification of his argument
can be applied to the original cubic NLS \eqref{NLS1} 
for construction of solutions in  $C([0, T]; H^s)$, $s\geq0$
(without any auxiliary norms.)
It may be of interest to compare 
and possibly combine
two infinite iteration arguments in  \cite{CH2} and this paper.

We prove Theorems \ref{thm0} and  \ref{thm1}
by establishing {\it a priori} estimates,
where we only use  the $C_tH^s_x$-norm of solutions.
In the next subsection, we briefly describe the idea of
Poincar\'e-Dulac normal form reductions.

Before doing so, let us introduce an equivalent formulation to \eqref{NLS1}.
Let $S(t) = e^{-i t\dx^2}$ denote the semigroup to the linear Schr\"odinger equation:
$i u_t - u_{xx} = 0$.
We apply a change of coordinates: $v(t) = S(-t) u(t) =  e^{it\dx^2} {u}(t)$,
i.e. $\ft{v}(n, t) = e^{-in^2t} \ft{u}(n,t)$. 
For simplicity of notation, we use $v_n = v_n(t)$ to denote
$\ft{v}(n, t)$.
Then, $v$ satisfies the following equation:
\begin{align} \label{NLS3}
\dt v_n (t)& = i \Nf(v, v, v)(n, t)\\
 :& = i 
\sum_{n = n_1 - n_2 + n_3 }
e^{- i \Phi(\bar{n})t } 
v_{n_1} \cj{v}_{n_2}v_{n_3}\notag \\
& = 
i \sum_{\substack{n = n_1 - n_2 + n_3 \\ n_2 \ne n_1, n_3} }
e^{- i \Phi(\bar{n})t } 
v_{n_1} \cj{v}_{n_2}v_{n_3}
+ i 
\sum_{\substack{n = n_1 - n_2 + n_3 \\ n_2 = n_1 \text{ or }n_3} }e^{- i \Phi(\bar{n})t } 
v_{n_1} \cj{v}_{n_2}v_{n_3}\notag \\
 & =: i\,  \Nf_1(v, v, v)(n, t) + i \Rf_1(v, v, v)(n, t).
 \end{align}

\noi
Note that $v(0) = u_0 \in H^s(\T)$.
The phase function $\Phi(\bar{n})$ is defined by 
\begin{align}\label{Phi}
\Phi(\bar{n}):& = \Phi(n, n_1, n_2, n_3) = n^2 - n_1^2 + n_2^2- n_3^2 \notag \\
& = 2(n_2 - n_1) (n_2 - n_3)
= 2(n - n_1) (n - n_3),
\end{align}

\noi
where the last two equalities hold under $n = n_1 - n_2 + n_3$.
From \eqref{Phi}, it follows that 
$\Nf_1$ corresponds to the non-resonant part (i.e. $\Phi(\bar{n})\ne 0$) of the nonlinearity.
Throughout this paper, we introduce several multilinear expressions.\footnote{By a multilinear operator, 
we mean an operator such that it is linear or conjugate linear with respect to each argument.}
We often suppress its dependence on $t$
and its multiple arguments $v$.
For example, we write 
 $(\Nf_1)_n$  or $\Nf_1(v)_n$ for $\Nf_1(v, v, v)(n, t)$.

As is well known, 
working in terms of $v$ has certain advantages.
In \cite{BO1}, Bourgain made an effective use of this coordinate
(called interaction representation in Quantum Mechanics \cite{G})
by introducing the $X^{s, b}$ spaces.
From the definition \eqref{XSB}, we have
$\|u\|_{X^{s, b}} = \|v\|_{H^b_tH^s_x}$,
i.e. a function $u$ is in $X^{s, b}$
if and only if its interaction representation $v(t) = S(-t) u(t)$
is in the classical Sobolev space  $H^b_t H^s_x$.

\subsection{Poincar\'e-Dulac normal form reduction: formal argument}

\label{SUBSEC:1.2}

First, let us describe the classical {\it Poincar\'e-Dulac Theorem}.
Consider a formal vector-valued power series $Ax + F(x) := Ax + \sum_{j = a}^\infty f_j(x)$,
with some $a \geq 2$, 
in $n$ variables $x = (x_1, x_2, \cdots, x_n)$, 
where $f_j(x)$ denotes nonlinear terms of degree $j$ in $x$.
Assume that the eigenvalues of $A$ are distinct.
Then, 
{\it Poincar\'e-Dulac Theorem} \cite{A} states the following.
Given 
a differential equation 
\begin{equation}\label{PD1}
\dt x = Ax + F(x) = Ax  + \sum_{j = a}^\infty f_j(x),
\end{equation}

\noi
we can introduce a sequence of formal changes of variables 
\begin{align} 
  & z_1  = x + y_1,   \notag \\
  & z_2  = z_1 + y_2 = x+ y_1 + y_2, \notag \\
  & \hphantom{XX} \vdots \notag \\
  & z  = z_\infty = x + \sum_{j = 1}^\infty y_j, \label{PD5}
\end{align}
  
\noi
to reduce the system
to the canonical form:
\begin{equation}\label{PD2} 
\dt z = A z + G(z)
= Az + \sum_{j = a}^\infty g_j(z),
\end{equation}

\noi
where $g_j(z)$  in the series $G(z) = \sum_{j = a}^\infty g_j(z) $
denotes
 {\it resonant} monomials of degree $j$ in $z$.\footnote{In this formal discussion,
 we intentionally remain vague about the definition of resonant monomials.
 See Arnold \cite{A} for the precise definition.}
Moreover, after the $k$th step, we have
\begin{equation}\label{PD6} 
\dt z_k = A z_k + G_k (z_k),
\end{equation}

\noi
where 
monomials of degree up to $k(a-1)+a-2$  in $G_k (z_k)$ are all resonant.

With $\wt{x}(t) = e^{-tA} x(t)$, 
$\wt{y}_j(t) = e^{-tA} y_j(t)$, and so on, namely ``working in terms of the  interaction representations,'' 
we can rewrite the original equation \eqref{PD1}
as
\begin{equation} \label{PD3}
\dt \wt{x} = e^{-tA} F(e^{tA} \wt{x}),
\end{equation}

\noi
and the resulting canonical equations  \eqref{PD6} and \eqref{PD2} as
\begin{equation} \label{PD4}
\begin{cases}
\dt \wt{z}_k =  e^{-tA} G_k (e^{tA} \wt{z}_k), & \text{after the $k$th step,}\\
\dt \wt{z} = e^{-tA} G(e^{tA} \wt{z}), & k = \infty.
\end{cases}
\end{equation}

\noi
Note that the right hand sides of \eqref{PD4} 
consist of only resonant monomials when $k = \infty$
(and up to degree $k(a-1) + a-2$ after the $k$ th step.)
After integrating  \eqref{PD4} in time, we obtain
\begin{equation} \label{PD9}
\begin{cases}  
\wt{z}_k (t) = \wt{z}_k(0) + \int_0^t  e^{-t'A} G_k (e^{t'A} \wt{z}_k(t')) dt',  
& \text{after the $k$th step,}\\
\wt{z}(t)  = \wt{z}(0) +\int_0^t e^{-t'A} G(e^{t'A} \wt{z}(t'))dt',  
& k = \infty.
\end{cases}
\end{equation}

\noi
With \eqref{PD5},  we formally have
\begin{enumerate}
\item After the $k$th step:
\begin{equation} \label{PD7}
\wt{x}(t) = \wt{x}(0) - \sum_{j = 1 }^k \big[\,  \wt{y}_j(t) 
-\wt{y}_j(0) \big]+ \int_0^t e^{-t'A} G_k (e^{t'A} \wt{z}_k (t'))dt'.
\end{equation}

\noi
Recall that
monomials of degree up to $k(a-1)+a-2$ in $G_k (z_k)$ are all resonant.

\item With $k = \infty$:
\begin{equation} \label{PD8}
\wt{x}(t) = \wt{x}(0) - \sum_{j = 1}^\infty \big[\, \wt{y}_j(t) 
- \wt{y}_j(0) \big]+ \int_0^t e^{-t'A} G (e^{t'A} \wt{z}(t')) dt'.
\end{equation}
\end{enumerate}

\noi
The point of the classical Poincar\'e-Dulac normal form
is to renormalize the flow so that it is expressed in terms of resonant terms as in \eqref{PD6}, \eqref{PD4},
and \eqref{PD9}.
However, for our purpose, the formulations \eqref{PD7} and \eqref{PD8}
turn out to be more useful.

\medskip

In the following, we take the infinite dimensional system \eqref{NLS3},
and formally apply the Poincar\'e-Dulac normal form reductions
to it.
In order to prove Theorems \ref{thm0} and \ref{thm1}, 
we present the revised application of  
the Poincar\'e-Dulac normal form reductions
{\it with estimates} in Sections \ref{SEC:2} and  \ref{SEC:3}.

The term $\Rf_1(v)$ in \eqref{NLS3} consists of only resonant monomials, 
and thus we leave it as it is.
Now, apply {\it differentiation by parts}, i.e. integration by parts 
without an integration symbol - this terminology was introduced in 
Babin-Ilyin-Titi \cite{BIT} -
on the non-resonant part $\Nf_1(v)$:
\begin{align}
(\Nf_1(v))_n & = \dt \bigg[ i \sum_{\substack{n = n_1 - n_2 + n_3 \\ n_2 \ne n_1, n_3} }
\frac{e^{- i \Phi(\bar{n})t } }{ \Phi(\bar{n})}
v_{n_1} \cj{v}_{n_2}v_{n_3}\bigg] \notag \\
&\hphantom{X} - i \sum_{\substack{n = n_1 - n_2 + n_3 \\ n_2 \ne n_1, n_3} }
\frac{e^{- i \Phi(\bar{n})t } }{ \Phi(\bar{n})}
\dt\big( v_{n_1} \cj{v}_{n_2}v_{n_3}\big) \notag\\
& =: \dt(\Nf_{21})_n + (\Nf_{22})_n. \label{fN1}
\end{align}

\noi
For simplicity of presentation, let us drop the complex number $i$
and simply use $1$ for $\pm 1$ and $\pm i$
appearing in the following formal computation.
Moreover, assume that the time derivative falls on the first factor
$v_{n_1}$
of $v_{n_1} \cj{v}_{n_2}v_{n_3}$ in the second term $\Nf_{22}$ in \eqref{fN1},
counting the multiplicity.
Then, from \eqref{NLS3}, we have
\begin{align}
(\Nf_{22})_n 
& = 3 \sum_{\substack{n = n_1 - n_2 + n_3 \\ n_2 \ne n_1, n_3} }
\frac{e^{- i \Phi(\bar{n})t } }{ \Phi(\bar{n})}
\,  (\Nf)_{n_1}
 \cj{v}_{n_2}v_{n_3}\notag\\
& = 
3 \sum_{\substack{n = n_1 - n_2 + n_3 \\ n_2 \ne n_1, n_3} }
\sum_{n_1 = m_1 - m_2 + m_3 }
\frac{e^{- i (\Phi(\bar{n})+ \Phi(\bar{m}))t } }{ \Phi(\bar{n})}
v_{m_1} \cj{v}_{m_2}v_{m_3} \cj{v}_{n_2}v_{n_3} \label{fN22} .
\end{align}

\noi
As before, the phase function $\Phi(\bar{m})$ is defined by 
\begin{align}\label{Phi_m}
\Phi(\bar{m}):& = \Phi(n_1, m_1, m_2, m_3) = n_1^2 - m_1^2 + m_2^2- m_3^2 \notag \\
& = 2(m_2 - m_1) (m_2 - m_3)
= 2(n_1 - m_1) (n_1 - m_3),
\end{align}

\noi
where the last two equalities hold under $n_1 = m_1 - m_2 + m_3$.

In particular, $\Nf_{22}$ consists of quintic monomials.
Then, from \eqref{NLS3},  \eqref{fN1}, and \eqref{fN22}, we have
\begin{align} \label{v1}
v(t) = v(0) + \Nf_{21}(t) - \Nf_{21}(0)
+ \int_0^t \Rf_1(t') + \Nf_{22}(t') dt'.
\end{align}

\noi
This corresponds to \eqref{PD7} with $k = 1$ (and $ a = 3$.)
Indeed,  all of the cubic monomials in the integrand of \eqref{v1}
are in $\Rf_1$, and they are all resonant, verifying the condition $3 = k + a - 1$
with $k = 1$ and $a = 3$.
Also, $\Nf_{21}$ corresponds to the first correction term $\wt{y_1}$
and its degree is 3.

In the second step, we can divide $\Nf_{22}$ into 
its resonant part $\Rf_1$ and non-resonant part $\Nf_2$,
according to $\Phi(\bar{n})+ \Phi(\bar{m}) = 0$ or $\ne 0$.
Then, we apply differentiation by parts on 
the non-resonant part $\Nf_2$.
This yields
\begin{equation} \label{fN2}
\Nf_2 = \dt \Nf_{31} + \Nf_{32}
\end{equation}

\noi
where $\Nf_{31}$ consists of quintic monomials
and $\Nf_{32}$ consists of septic monomials.
Moreover, the constant appearing in front of the summation is $3\cdot 5$.
(We assume that the time derivative
falls on the first of the five factors,
and thus we need to count the multiplicity.)  See \eqref{fN22}. 
Then, from \eqref{NLS3},  \eqref{fN1}, and \eqref{fN2}, we have
\begin{align} \label{v2}
v(t) = v(0) + \sum_{j = 1}^2 \big[\, \Nf_{(j+1) 1}(t) - \Nf_{(j+1)1}(0)\big]
+ \int_0^t \Rf_1(t') + \Rf_2(t') + \Nf_{32}(t') dt',
\end{align}

\noi
corresponding to \eqref{PD7} with $k = 2$ (and $ a = 3$.)
Since $\Nf_{32}$ consists of septic terms, 
all the terms up to degree 5 in the integrand in \eqref{v2}
are in $\Rf_1$ or $\Rf_2$, and hence they are resonant.

\medskip

In this way, we can repeat this formal procedure indefinitely. 
However, for our purpose, we need to estimate each term in $C_t H^s$,
and there are three potential difficulties.
\begin{enumerate}
\item  We need to estimate higher and higher order monomials.
This corresponds to establishing multilinear estimates
with higher and higher degrees of nonlinearities.

\item At the $k$th step, 
the number of factors on which the time derivative falls
is $2k+1$.
Thus, the constants grow like $3\cdot5\cdot7\cdot \cdots \cdot (2k+1)$.

\item Our multilinear estimates need to provide small constants
on the terms {\it without} time integration,
i.e. on the boundary terms, such as 
$ \Nf_{21}(t) - \Nf_{21}(0)$ in \eqref{v1}
and $\sum_{j = 1}^2 \big[\, \Nf_{(j+1)1}(t) - \Nf_{(j+1)1}(0)\big]$
in \eqref{v2}.
(We can introduce small constants for the terms
inside time integration by making the time interval of integration 
sufficiently small, depending on $\|u_0\|_{L^2}$.)

\end{enumerate}

\noi
In the following two sections, 
we revise this formal iteration of Poincar\'e-Dulac normal form reductions
to treat these three issues.
In particular, 
when we apply differentiation by parts on the non-resonant part $\Nf_k$
consisting of monomials of degree $2k+1$, 
we first divide it into two parts:
a part on which we can directly establish $(2k+1)$-linear estimate
(without differentiation by parts)
and a part on which we can not establish any $(2k+1)$-linear estimate.
Then, we apply differentiation by parts on the second part.
The issues (2) and (3) can be treated by 
introducing different levels of thresholds
for separating resonant and non-resonant parts
at each iteration step.
See \eqref{A_N}, \eqref{C1}, \eqref{C2}, and \eqref{CJ}.
Lastly, we point out that the crucial tool
for establishing multilinear estimates
is the divisor counting argument
(as in the proof of the periodic $L^4$- and $L^6$-Strichartz estimates
by Bourgain \cite{BO1}.) See \eqref{divisor}.

A precursor to this argument appears in the work by
Babin-Ilyin-Titi \cite{BIT} for KdV on $\T$,
followed by the authors \cite{KO} for mKdV on $\T$.\footnote{This kind of integration by parts
was previously used in Takaoka-Tsutsumi \cite{TT}.}
Note that two iterations were sufficient in \cite{BIT, KO}
(in \cite{KO}, the second differentiation by parts
is performed  in a slightly different manner
in the endpoint case)
whereas, for cubic NLS, we need to iterate the argument infinitely many times.
This is perhaps due to 
weaker dispersion of the Schr\"odinger equation
as compared to that of 
the Airy equation (= linear part of KdV and mKdV.)
Also,  Shatah \cite{SHA}
and, more recently, Germain-Masmoudi-Shatah \cite{GMS}
use ideas from Poincar\'e-Dulac normal form reduction\footnote{In \cite{GMS}, they introduced
time resonances, space resonances, and space-time resonances.
Resonances in this paper correspond to their time resonances.}
(to send a quadratic term into a cubic one by one iteration.)
However, their goal is global-in-time behavior of small solutions,
and is different from ours
(local-in-time with large data.)

Note that the Poincar\'e-Dulac normal form can be (formally) applied to 
non-Hamiltonian equations, whereas the Birkhoff normal form 
is for Hamiltonian equations.
See Bourgain \cite{BO2, BO3} and Colliander-Kwon-Oh \cite{CKO}
for inductive argument on the application of the Birkhoff normal form.
We point out that the argument in \cite{BO2, BO3, CKO}
is for large times with finite numbers of iterations,
whereas our argument is local-in-time with an infinite number of iterations.

\medskip

This paper is organized as follows.
In Section \ref{SEC:2}, we present the first step of (a revised version of)
Poincar\'e-Dulac normal form reduction
along with relevant estimates.
In Section \ref{SEC:3},
we introduce some notations and implement an infinite iteration scheme
of (a revised version of) Poincar\'e-Dulac normal form reductions,
establishing estimates on the terms appearing at each step.
In Section \ref{SEC:4},
we first express a smooth solution as
a sum of 
infinite series (see \eqref{41}),
and make sense of such a representation 
by the estimates in Sections \ref{SEC:2} and \ref{SEC:3}.
Then, we construct a weak solution in $C([0, T];L^2)$
with initial condition in $L^2$.
In Section \ref{SEC:5}, we work on $H^s$ for $s\geq \frac{1}{6}$
and justify the formal argument in Sections \ref{SEC:2} and \ref{SEC:3}.
This proves unconditional uniqueness in $C([0, T];H^s)$ for $s\geq\frac{1}{6}$.

\section{Poincar\'e-Dulac normal form reduction,  Part 1: basic setup}
\label{SEC:2}

In this section, we discuss the first step of Poincar\'e-Dulac normal form reduction.
In the following, we take $s = 0$ for simplicity, and 
 estimate each multilinear expression appearing in the discussion
by the $L^2_x$-norm, independent of time.
Namely, we establish direct $C_t L^2_x$ estimates.
Then, we implement an infinite iteration scheme
in the next section.
As in Section \ref{SEC:1}, we often drop the complex number $i$
and simply use $1$ for $\pm 1$ and $\pm i$
in the following.\footnote{When we apply differentiation by parts,
we keep the minus sign on the second term
for emphasis.
For example, see \eqref{N12}.}
Lastly, in Sections \ref{SEC:2} and \ref{SEC:3},
we perform all the formal computations, assuming that $u$
(and hence $v$)
is a smooth solution. 
In Section \ref{SEC:5}, we justify our formal computations
when $ u \in C_t H^s$, $ s\geq \frac{1}{6}$.

First, we write the nonlinearity $u|u|^2$ in \eqref{NLS1}
as
\begin{align*}
u|u|^2 & = \bigg( {u} |{u}|^2 - 2{u} \fint_\T \ |{u}|^2 dx\bigg) + 2{u} \fint_\T \ |{u}|^2 dx\notag \\
& = \sum_{n_2 \ne n_1, n_3} \ft{{u}}(n_1)\cj{{{u}}(n_2)}\ft{{u}}(n_3) 
	e^{i(n_1 - n_2 + n_3)x} - 
	\sum_n \ft{{u}}(n)|\ft{{u}}(n)|^2 e^{inx}\\
& \hphantom{X}	
+ 2\bigg(\fint_\T \ |{u}|^2 dx\bigg) \sum_n \ft{u}(n) e^{inx},
\end{align*}

\noi
where $\fint_\T |u|^2 dx := \frac{1}{2\pi} \int_\T |u|^2 dx$.
Then, \eqref{NLS3} can be written as 
\begin{align} \label{NLS4}
\dt v_n & = i 
\sum_{\substack{n = n_1 - n_2 + n_3\\ n_2\ne n_1, n_3} }
e^{- i \Phi(\bar{n})t } 
v_{n_1} \cj{v}_{n_2}v_{n_3}
- i|v_n|^2 v_n  
+2 i \bigg(\fint_\T \ |v|^2 dx\bigg) v_n \notag \\
& =: i\,  \N_1(v)(n) -i\,  \RR_1(v)(n) + i \RR_2(v)(n),
 \end{align}

\noi
where the phase function $\Phi(\bar{n})$ is as in \eqref{Phi}.
From \eqref{Phi}, it follows that 
$\N_1$ corresponds to the non-resonant part (i.e. $\Phi(\bar{n})\ne 0$) of the nonlinearity
and $\RR_1$ and $\RR_2$ correspond to the resonant part.


\begin{lemma}\label{LEM:R1}
Let $\RR_1$ and $\RR_2$ be as in \eqref{NLS4}.
Then, we have
\begin{align} \label{R1_1}
\| \RR_j(v)\|_{L^2} & \lesssim \|v\|_{L^2}^3,\\
\| \RR_j(v) - \RR_j(w)\|_{L^2} 
& \lesssim 
\big(\|v\|_{L^2}^2 + \|w\|_{L^2}^2\big) \|v - w\|_{L^2} \label{R1_2}
\end{align}

\noi
for $j = 1, 2$.
\end{lemma}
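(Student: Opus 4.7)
The plan is a direct Fourier-side computation, exploiting that both resonant terms are essentially pointwise (in frequency) operations, so no multilinear frequency interaction needs to be dissected.

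First I would write out $\RR_1$ and $\RR_2$ from \eqref{NLS4} explicitly on the Fourier side: $\RR_1(v)(n) = |v_n|^2 v_n$, while $\RR_2(v)(n) = \bigl(\fint_\T |v|^2\,dx\bigr) v_n$ is just a scalar multiple of $v_n$. By Parseval, $\|v\|_{L^2(\T)}^2 \sim \sum_n |v_n|^2 = \|v\|_{\ell^2_n}^2$, and in particular the trivial embedding $\|v\|_{\ell^\infty_n} \leq \|v\|_{\ell^2_n}$ holds. For \eqref{R1_1} applied to $\RR_1$, this gives
\[
\|\RR_1(v)\|_{L^2}^2 \;\sim\; \sum_n |v_n|^6 \;\leq\; \|v\|_{\ell^\infty_n}^4 \sum_n |v_n|^2 \;\lesssim\; \|v\|_{L^2}^6.
\]
For $\RR_2$, pull the (finite) factor $\fint_\T|v|^2\,dx \lesssim \|v\|_{L^2}^2$ out and the bound is immediate.

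For the Lipschitz estimate \eqref{R1_2} on $\RR_1$, I would use the pointwise-in-$n$ telescoping identity
\[
|v_n|^2 v_n - |w_n|^2 w_n = |v_n|^2(v_n - w_n) + v_n(\cj{v}_n - \cj{w}_n) w_n + \cj{w}_n(v_n - w_n)w_n,
\]
so that $\bigl||v_n|^2 v_n - |w_n|^2 w_n\bigr| \lesssim (|v_n|^2 + |w_n|^2)\,|v_n - w_n|$. Then
\[
\|\RR_1(v) - \RR_1(w)\|_{L^2}^2 \lesssim \bigl(\|v\|_{\ell^\infty_n}^2 + \|w\|_{\ell^\infty_n}^2\bigr)^2 \sum_n |v_n - w_n|^2 \lesssim \bigl(\|v\|_{L^2}^2 + \|w\|_{L^2}^2\bigr)^2 \|v - w\|_{L^2}^2,
\]
again via $\ell^\infty_n \hookrightarrow \ell^2_n$. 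For $\RR_2$, split
\[
\RR_2(v) - \RR_2(w) = \Bigl(\fint_\T |v|^2\,dx\Bigr)(v - w) + \Bigl(\fint_\T (|v|^2 - |w|^2)\,dx\Bigr) w,
\]
bound the first term by $\|v\|_{L^2}^2\|v-w\|_{L^2}$, and use Cauchy--Schwarz on $|v|^2 - |w|^2 = (v-w)\cj v + w(\cj v - \cj w)$ to control the second by $(\|v\|_{L^2}+\|w\|_{L^2})\|v-w\|_{L^2}\cdot\|w\|_{L^2}$. Summing and using $ab \lesssim a^2 + b^2$ yields \eqref{R1_2}.

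There is essentially no obstacle here: the lemma is a warm-up, and the only point to be slightly careful about is using the trivial $\ell^\infty_n \hookrightarrow \ell^2_n$ inclusion (rather than any smoothing from the Schr\"odinger flow) so that the resulting bounds are independent of time $t$, which is exactly the $C_tL^2_x$ setup promised at the start of this section.
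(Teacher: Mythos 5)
Your argument is correct and matches the paper's approach: the paper disposes of $\RR_1$ by citing the embedding $\ell^2_n\subset\ell^6_n$ (equivalent to your $\ell^\infty_n\hookrightarrow\ell^2_n$ interpolation step), and handles the difference of $\RR_2$ by the same telescoping identity plus Cauchy--Schwarz that you write down. The only difference is that the paper leaves the $\RR_1$ Lipschitz telescope implicit, whereas you spell it out.
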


\begin{proof}
For $\RR_1$, this is clear from $\ell^2_n \subset \ell^6_n$.
For $\RR_2$, the result follows from Cauchy-Schwarz inequality, once we note
\begin{align*}
2\bigg(\int_\T &  |v|^2 dx\bigg) v_n
- 2\bigg(\int_\T  |w|^2 dx\bigg) w_n\\
& = 2\bigg(\int_\T v (\cj{v} - \cj{w}) dx + \int_\T (v - w)\cj{w} dx\bigg) v_n
+2 
\bigg(\int_\T \ |v|^2 dx\bigg) (v_n -w_n). \qedhere
\end{align*}

\end{proof}

\medskip

Next, we consider the non-resonant part $\N_1$.
Let $N >0$ be a large parameter.
(As we see later, $N = N(\|u_0\|_{L^2})$.)
First, we write 
\begin{equation} \label{N1}
\N_1 = \N_{11} + \N_{12},
\end{equation}

\noi
where $\N_{11}$ is the restriction of $\N_1$
onto $A_N$, where $A_N = \bigcup_n A_N(n)$ with
\begin{align}\label{A_N}
A_N(n):= \big\{ (n, n_1, n_2, n_3); & \ n = n_1 - n_2 + n_3, \ n_1, n_3 \ne n, \notag\\
& |\Phi(\bar{n})| = |2(n - n_1) (n - n_3)| \leq N \big\}
\end{align}

\noi
and $\N_{12} := \N_1 - \N_{11}$.

Recall the following number theoretic fact \cite{HW}.
Given an integer $m$, let $d(m)$ denote the number of divisors of $m$.
Then, we have
\begin{equation} \label{divisor}
d(m) \lesssim e^{c\frac{\log m}{\log\log m} }
(= o(m^\eps) \text{ for any }\eps>0.)
\end{equation}

\noi
With \eqref{divisor}, we  estimate $\N_{11}$ as follows.
 
\begin{lemma}\label{LEM:N11}
Let $\N_{11}$ be as above.
Then, we have
\begin{align} \label{N11_1}
\| \N_{11}(v)\|_{L^2} & \lesssim N^{\frac{1}{2}+} \|v\|_{L^2}^3,\\
\| \N_{11}(v) - \N_{11}(w)\|_{L^2} 
& \lesssim 
N^{\frac{1}{2}+} \big(\|v\|_{L^2}^2 + \|w\|_{L^2}^2\big) \|v - w\|_{L^2}. \label{N11_2}
\end{align}
\end{lemma}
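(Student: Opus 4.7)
\medskip

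\noindent\textbf{Proof proposal.}
The plan is to estimate $\N_{11}(v)(n)$ pointwise in $n$ by Cauchy--Schwarz, isolating a counting factor for $A_N(n)$ from an $\ell^2$-type factor of the cubic product, and then sum in $n$ after swapping the order of summation. All phases $e^{-i\Phi(\bar n)t}$ have modulus one and can be discarded, so the estimates are independent of $t$.

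First I would apply Cauchy--Schwarz in the sum defining $\N_{11}(v)(n)$ to obtain
\begin{equation*}
|\N_{11}(v)(n)|^2 \le |A_N(n)| \sum_{(n,n_1,n_2,n_3)\in A_N(n)} |v_{n_1}|^2 |v_{n_2}|^2 |v_{n_3}|^2.
\end{equation*}
To bound $|A_N(n)|$, I would change variables $a=n-n_1$, $b=n-n_3$ (so that $n_2 = n-a-b$ is determined and $n_1,n_3\ne n$ becomes $a,b\ne 0$), reducing the count to pairs $(a,b)\in\Z^2\setminus\{ab=0\}$ with $|ab|\le N/2$. Either directly summing $\sum_{1\le|a|\le N/2} N/|a| \lesssim N\log N$, or equivalently using the divisor bound \eqref{divisor} on the level sets $\{2ab = \mu\}$ for $|\mu|\le N$, gives $|A_N(n)|\lesssim N^{1+}$ uniformly in $n$.

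Next I would sum $|\N_{11}(v)(n)|^2$ in $n$ and switch the order of summation. Since for fixed $(n_1,n_2,n_3)$ with $n_2\ne n_1,n_3$ the value $n=n_1-n_2+n_3$ is uniquely determined and the constraint $|\Phi(\bar n)|\le N$ is just an indicator, the remaining sum is bounded by $\|v\|_{L^2}^6$. Combining these two bounds gives \eqref{N11_1}. For the difference estimate \eqref{N11_2}, I would use the trilinearity of the symbol and the telescoping identity
\begin{equation*}
v_{n_1}\cj v_{n_2} v_{n_3} - w_{n_1}\cj w_{n_2} w_{n_3} = (v_{n_1}-w_{n_1})\cj v_{n_2} v_{n_3} + w_{n_1}(\cj v_{n_2}-\cj w_{n_2})v_{n_3} + w_{n_1}\cj w_{n_2}(v_{n_3}-w_{n_3}),
\end{equation*}
then apply the same Cauchy--Schwarz/counting argument to each of the three pieces.

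I do not expect a serious obstacle: the heart of the argument is just the counting bound $|A_N(n)|\lesssim N^{1+}$, and the only mild subtlety is being careful that the restriction $n_2\ne n_1,n_3$ is equivalent to $a,b\ne 0$ in the change of variables, so that one genuinely avoids the diagonal when counting. The $N^{\frac12+}$ loss on the right-hand side is sharp for the naive Cauchy--Schwarz scheme and reflects the fact that one is paying for the full non-resonant region $|\Phi|\le N$ rather than exploiting its oscillation; the role of $N$ as a cutoff parameter to be optimized will be clear in later sections.
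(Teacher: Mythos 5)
Your proof is correct and follows essentially the same route as the paper: Cauchy--Schwarz against the counting bound $|A_N(n)|\lesssim N^{1+}$, a swap of the order of summation to extract $\|v\|_{L^2}^6$, and the telescoping identity for the difference estimate \eqref{N11_2}. The paper obtains $|A_N(n)|\lesssim N^{1+}$ by applying the divisor bound \eqref{divisor} on each level set $\{\Phi(\bar n)=\mu\}$ and summing over $|\mu|\le N$, which is one of the two options you describe; your elementary alternative $\sum_{1\le|a|\le N/2}N/|a|\lesssim N\log N$ works equally well for this lemma, though the at-fixed-$\mu$ divisor count is the structure the paper reuses in the later lemmata where the phases $\mu_j$ must be tracked individually.
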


\begin{proof}
We only prove \eqref{N11_1} since \eqref{N11_2} follows in a similar manner.
Fix $n, \mu \in \Z$ with $|\mu| \leq N$.
Then, from \eqref{divisor}, 
there are at most $o(N^{0+})$ many choices for $n_1$ and $n_3$ (and hence for $n_2$ from $n = n_1 - n_2 + n_3$)
satisfying
\begin{equation}
\label{muu}
\mu = 2(n - n_1) (n - n_3).
\end{equation}

\noi
Then, by Cauchy-Schwarz inequality, we have
\begin{align*}
\|\N_{11}\|_{L^2} 
& = \bigg(\sum_n \Big|
\sum_{|\mu|\leq N } \sum_{\substack{n = n_1 - n_2 + n_3\\ n_2\ne n_1, n_3\\\mu = \Phi(\bar{n})} }
v_{n_1} \cj{v}_{n_2}v_{n_3}
\Big|^2\bigg)^\frac{1}{2} \\
& \leq \bigg\{ \sum_n \bigg(\sum_{|\mu|\leq N} N^{0+} \bigg)
\bigg(\sum_{
\substack{n = n_1 - n_2 + n_3\\n_1, n_3}} |v_{n_1}|^2 
|v_{n_1 + n_3-n}|^2|v_{n_3}|^2\bigg)
\bigg\}^\frac{1}{2}\\
& \lesssim N^{\frac{1}{2}+} \|v\|_{L^2}^3. 
\qedhere
\end{align*}
\end{proof}

\medskip
Now, we apply (the first step of) Poincar\'e-Dulac normal form reduction
to the remaining non-resonant part $\N_{12}$.
Namely, we differentiate $\N_{12}$ by parts 
(i.e. apply the product rule on differentiation in a reversed order)
and write
\begin{align}
 \N_{12}(v)_n  & =  
  \sum_{ A_N(n)^c} 
\dt\bigg( \frac{e^{- i \Phi(\bar{n})t } }{-i\Phi(\bar{n})}\bigg)
v_{n_1} \cj{v}_{n_2}v_{n_3} \notag \\
 & = 
i  \sum_{
 A_N(n)^c} 
 \dt \bigg[
\frac{e^{- i \Phi(\bar{n})t } }{2(n - n_1) (n - n_3)}
v_{n_1} \cj{v}_{n_2}v_{n_3}\bigg]  \notag \\
& \hphantom{XXX} 
-i  \sum_{
A_N(n)^c} 
\frac{e^{- i \Phi(\bar{n})t } }{2(n - n_1) (n - n_3)}
\dt\big( v_{n_1} \cj{v}_{n_2}v_{n_3}\big) \notag \\
 & = 
i \dt \bigg[
 \sum_{
 A_N(n)^c} 
\frac{e^{- i \Phi(\bar{n})t } }{2(n - n_1) (n - n_3)}
v_{n_1} \cj{v}_{n_2}v_{n_3}\bigg]  \notag \\
& \hphantom{XXX} 
-i  \sum_{
A_N(n)^c} 
\frac{e^{- i \Phi(\bar{n})t } }{2(n - n_1) (n - n_3)}
\dt\big( v_{n_1} \cj{v}_{n_2}v_{n_3}\big) \notag \\
& =: \dt (\N_{21})_n + (\N_{22})_n. \label{N12}
\end{align}

\noi
Note that we formally exchanged the order of the  sum and the time differentiation
in the first term
at the third equality.
See Section \ref{SEC:5} for more on this issue.

In the following, we assume that the frequencies $(n, n_1, n_2, n_3)$
are on $A_N^c$ defined in \eqref{A_N}, and we may not state it explicitly.

\begin{lemma}\label{LEM:N21}
Let $\N_{21}$ be as in \eqref{N12}.
Then, we have
\begin{align} \label{N21_1}
\| \N_{21}(v)\|_{L^2} & \lesssim N^{-\frac{1}{2}+} \|v\|_{L^2}^3,\\
\| \N_{21}(v) - \N_{21}(w)\|_{L^2} 
& \lesssim 
N^{-\frac{1}{2}+} \big(\|v\|_{L^2}^2 + \|w\|_{L^2}^2\big) \|v - w\|_{L^2}. \label{N21_2}
\end{align}
\end{lemma}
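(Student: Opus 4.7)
The argument parallels that of Lemma~\ref{LEM:N11}, with the weight $1/\Phi(\bar n) = 1/(2(n-n_1)(n-n_3))$ supplying the decay in $N$. Fixing $n\in\Z$ and $\mu\in\Z$ with $|\mu|>N$, the divisor bound \eqref{divisor} shows that there are at most $O(|\mu|^{0+})$ triples $(n_1,n_2,n_3)$ with $n = n_1-n_2+n_3$ and $\Phi(\bar n) = \mu$, since $n$ together with the factorization $\mu/2 = (n-n_1)(n-n_3)$ determines $n_1,n_3$ up to divisor choices and then $n_2 = n_1+n_3-n$.

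To prove \eqref{N21_1}, I would group the sum defining $\N_{21}(v)_n$ by the common value $\mu = \Phi(\bar n)$, and apply a weighted Cauchy--Schwarz in $\mu$ using the splitting $|\mu|^{-1} = |\mu|^{-1/2-\delta}\cdot|\mu|^{-1/2+\delta}$ for some small $\delta>0$. The first factor summed yields $\sum_{|\mu|>N}|\mu|^{-1-2\delta}\lesssim N^{-2\delta}$. For the second, a Cauchy--Schwarz on the inner sum (which has $O(|\mu|^{0+})$ terms) together with the crude bound $|\mu|^{-1+2\delta+0+}\leq N^{-1+2\delta+0+}$ (valid since the exponent is negative) allows one to drop the restriction $\Phi(\bar n)=\mu$ and sum over all triples with $n=n_1-n_2+n_3$. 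The outcome is
\begin{equation*}
|\N_{21}(v)_n|^2 \;\lesssim\; N^{-1+0+}\sum_{n_1,n_3}|v_{n_1}|^2|v_{n_1+n_3-n}|^2|v_{n_3}|^2.
\end{equation*}
Summing in $n$ and applying Plancherel gives $\|\N_{21}(v)\|_{L^2}^2 \lesssim N^{-1+}\|v\|_{L^2}^6$, which is \eqref{N21_1}.

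The Lipschitz bound \eqref{N21_2} then follows from the trilinear telescoping identity
\begin{equation*}
v_{n_1}\cj v_{n_2}v_{n_3} - w_{n_1}\cj w_{n_2}w_{n_3} \;=\; (v_{n_1}-w_{n_1})\cj v_{n_2}v_{n_3} + w_{n_1}(\cj v_{n_2}-\cj w_{n_2})v_{n_3} + w_{n_1}\cj w_{n_2}(v_{n_3}-w_{n_3}),
\end{equation*}
applying the same bound to each of the three resulting trilinear forms.

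I do not anticipate any genuine obstacle; the calculation is essentially a weighted variant of the proof of Lemma~\ref{LEM:N11}. The one point requiring care is the bookkeeping of the two $N^{-1/2}$ factors: one arises from the Dirichlet-type sum $\sum_{|\mu|>N}|\mu|^{-1-2\delta}\lesssim N^{-2\delta}$, and the other from pulling the factor $|\mu|^{-1/2}\leq N^{-1/2}$ out of the inner sum before extending the summation. The divisor contribution $|\mu|^{0+}$ is absorbed into an arbitrarily small $N^{0+}$ loss, yielding precisely the exponent $-\tfrac{1}{2}+$ stated in \eqref{N21_1} and \eqref{N21_2}.
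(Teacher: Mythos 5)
Your proposal is correct and follows essentially the same route as the paper: group by $\mu=\Phi(\bar n)$, invoke the divisor bound to count $(n_1,n_3)$ for each fixed $(n,\mu)$, and apply Cauchy--Schwarz so that the factor $\sum_{|\mu|>N}|\mu|^{-2+0+}\lesssim N^{-1+}$ supplies the decay while the remaining factor gives $\|v\|_{L^2}^6$. The paper performs a single Cauchy--Schwarz over $(n_1,n_3)$ with all of $|\mu|^{-1}$ in one slot, whereas you split $|\mu|^{-1}$ into $|\mu|^{-1/2\pm\delta}$ and do a double Cauchy--Schwarz (in $\mu$ and then on the inner fiber); this is just a different arrangement of the same estimate and lands on the same exponent.
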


\begin{proof}
We only prove \eqref{N21_1} since \eqref{N21_2} follows in a similar manner.
On $A_N^c$, we have $|\mu| > N$
where $\mu$ is as in \eqref{muu}.
As before, for fixed $n, \mu \in \mathbb{Z}$, 
there are at most $o(|\mu|^{0+})$ many choices for $n_1$ and $n_3$.
Then, by Cauchy-Schwarz inequality, we have
\begin{align*}
\|\N_{21}\|_{L^2} 
& \lesssim  \bigg\{\sum_n 
 \bigg(\sum_{|\mu| >N}\frac{1}{|\mu|^2} |\mu|^{0+}\bigg)
 \bigg( \sum_{n_1, n_3}
|v_{n_1}|^2 |\cj{v}_{n_2}|^2|v_{n_3}|^2\bigg)
\bigg\}^\frac{1}{2}\\
& \lesssim N^{-\frac{1}{2}+} \|v\|_{L^2}^3. 
\qedhere
\end{align*}
\end{proof}


By symmetry between $n_1$ and $n_3$, we can write the 
remaining term $\N_{22}$ as 
\begin{align}
(\N_{22})_n
& = -  2 i \sum_{\substack{n = n_1 - n_2 + n_3\\ n_2\ne n_1, n_3 }}
\frac{e^{- i \Phi(\bar{n})t } }{2(n - n_1) (n - n_3)}
\, \dt v_{n_1} \cj{v}_{n_2}v_{n_3} \notag \\
& \hphantom{xll} - i   \sum_{\substack{n = n_1 - n_2 + n_3\\ n_2\ne n_1, n_3 }}
\frac{e^{- i \Phi(\bar{n})t } }{2(n - n_1) (n - n_3)}
 v_{n_1} \dt \cj{v}_{n_2}v_{n_3} \notag \\
&  =: (\N_{221})_n + (\N_{222})_n. \label{N22}
\end{align}

\noi
In the following, we only estimate the first term $\N_{221}$
since $\N_{222}$ can be estimated analogously.
From \eqref{NLS4}, $\N_{221}$ can be divided into two terms:
\begin{align} \label{N221}
(\N_{221})_n & = 
  2 \sum_{\substack{n = n_1 - n_2 + n_3\\ n_2\ne n_1, n_3} }
\sum_{\substack{n_1 = m_1 - m_2  +m_3\\m_2 \ne m_1, m_3} }
\frac{e^{- i( \Phi(\bar{n})+ \Phi(\bar{m}) )t } }{2(n - n_1) (n - n_3)}
v_{m_1}\cj{v}_{m_2}v_{m_3}\cj{v}_{n_2}v_{n_3} \notag \\
& \hphantom{X} -   2  \sum_{\substack{n = n_1 - n_2 + n_3\\ n_2\ne n_1, n_3} }
\frac{e^{- i \Phi(\bar{n})t } }{2(n - n_1) (n - n_3)}
(\RR_{1}-\RR_2)_{n_1} \cj{v}_{n_2}v_{n_3}\notag\\
& =: (\N_{3})_n + (\N_{4})_n,
\end{align}

\noi
where the phase function $\Phi(\bar{m})$ is as in \eqref{Phi_m}.
%
%
The second term $\N_4$ can be easily estimated. 

\begin{lemma}\label{LEM:N4}
Let $\N_{4}$ be as in \eqref{N221}.
Then, we have
\begin{align} \label{N4_1}
\| \N_{4}(v)\|_{L^2} & \lesssim N^{-\frac{1}{2}+} \|v\|_{L^2}^5,\\
\| \N_{4}(v) - \N_{4}(w)\|_{L^2} 
& \lesssim 
N^{-\frac{1}{2}+} \big(\|v\|_{L^2}^4 + \|w\|_{L^2}^4\big) \|v - w\|_{L^2}. \label{N4_2}
\end{align}
\end{lemma}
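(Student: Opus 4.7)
The plan is to adapt the proof of Lemma~\ref{LEM:N21}, treating the resonant cubic expression $(\RR_1-\RR_2)(v)_{n_1}$ as a single ``amplitude'' $W_{n_1}$ on frequency $n_1$, whose $\ell^2_{n_1}$-norm is controlled via Lemma~\ref{LEM:R1}. I will only sketch \eqref{N4_1}; the bound \eqref{N4_2} then follows by the standard telescoping
\[
\bigl[(\RR_1-\RR_2)(v) - (\RR_1-\RR_2)(w)\bigr]_{n_1} \cj{v}_{n_2} v_{n_3} + (\RR_1-\RR_2)(w)_{n_1}\bigl[\cj{v}_{n_2} v_{n_3} - \cj{w}_{n_2} w_{n_3}\bigr],
\]
combined with the Lipschitz bound \eqref{R1_2} and a repetition of the multilinear argument below.

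Set $W_{n_1}:=(\RR_1-\RR_2)(v)_{n_1}$, so that \eqref{R1_1} gives $\|W\|_{\ell^2_{n_1}} \lesssim \|v\|_{L^2}^3$. Fix $n$. Since the summation in $\N_4$ is restricted to $A_N(n)^c$, the phase $\mu := \Phi(\bar n)=2(n-n_1)(n-n_3)$ satisfies $|\mu|>N$. I would then apply Cauchy--Schwarz in the pair $(n_1,n_3)$, with $n_2=n_1+n_3-n$:
\[
|(\N_4)_n| \lesssim \Big(\sum_{(n_1,n_3):\,|\Phi|>N}\frac{1}{\Phi(\bar n)^2}\Big)^{\!1/2}\Big(\sum_{(n_1,n_3)}|W_{n_1}|^2|v_{n_2}|^2|v_{n_3}|^2\Big)^{\!1/2}.
\]
By the divisor bound \eqref{divisor}, for each $\mu\neq 0$ the number of pairs $(n_1,n_3)$ with $2(n-n_1)(n-n_3)=\mu$ is $o(|\mu|^{0+})$, hence
\[
\sum_{(n_1,n_3):\,|\Phi|>N}\frac{1}{\Phi(\bar n)^2} \lesssim \sum_{|\mu|>N}\frac{|\mu|^{0+}}{|\mu|^2} \lesssim N^{-1+},
\]
uniformly in $n$. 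Squaring, summing in $n$, and changing variables $n_2=n_1+n_3-n$ decouples the three modes, yielding
\[
\|\N_4\|_{L^2}^2 \lesssim N^{-1+}\sum_{n_1,n_2,n_3}|W_{n_1}|^2|v_{n_2}|^2|v_{n_3}|^2 = N^{-1+}\|W\|_{\ell^2}^2\|v\|_{L^2}^4 \lesssim N^{-1+}\|v\|_{L^2}^{10},
\]
which gives \eqref{N4_1} after taking square roots.

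I do not foresee a real obstacle. The only bookkeeping subtleties are: the divisor count applies to the pair $(n_1,n_3)$, which pins down $\mu$ and leaves $n$ free for the substitution $n_2=n_1+n_3-n$ that factorizes the sum over three independent frequencies; and the amplitude $W_{n_1}$, although itself a cubic expression in $v$, enters this estimate only through its $\ell^2$-norm in $n_1$, which is exactly the quantity bounded by Lemma~\ref{LEM:R1}. This accounts for the jump from $\|v\|_{L^2}^3$ in Lemma~\ref{LEM:N21} to $\|v\|_{L^2}^5$ here, while the $N^{-\frac{1}{2}+}$ factor is inherited unchanged from the non-resonant weight.
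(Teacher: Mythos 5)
Your proof is correct and is essentially the same argument the paper intends: the paper's proof of Lemma~\ref{LEM:N4} simply says it follows from Lemmata~\ref{LEM:N21} and~\ref{LEM:R1}, and your write-up is precisely the spelled-out version of that—run the divisor-counting/Cauchy--Schwarz argument of Lemma~\ref{LEM:N21} with the factor $v_{n_1}$ replaced by $W_{n_1}=(\RR_1-\RR_2)(v)_{n_1}$, then control $\|W\|_{\ell^2}$ by Lemma~\ref{LEM:R1}.
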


\begin{proof}
This lemma  follows from Lemmata \ref{LEM:N21} and \ref{LEM:R1}.
\end{proof}

Now, it remains to estimate $\N_3$.
As in \eqref{N1}, we separate $\N_3$ into two parts, 
depending on the size of the phase $\Phi(\bar{n})+\Phi(\bar{m})$  (see \eqref{N^2_1} below),
and estimate a part of $\N_3$, 
corresponding to ``small'' phase $\Phi(\bar{n})+\Phi(\bar{m})$,
as in Lemma \ref{LEM:N11}.
See Lemma \ref{LEM:N^2_1}.
Then, we apply (the second step of) Poincar\'e-Dulac normal form reduction
to the remaining (non-resonant) part
with ``large'' phase $\Phi(\bar{n})+\Phi(\bar{m})$.  See \eqref{N^3}.
However, it turns out that in order to prove Theorems \ref{thm0} and  \ref{thm1},
we need to iterate this procedure infinitely many times.
Hence, in the next section, we first set up a necessary machinery 
and perform such an infinite iteration
to estimate $\N_3$.

\section{Poincar\'e-Dulac normal form reduction, Part 2: infinite iteration}
\label{SEC:3}

\subsection{Notations: index by trees}
In this section, we apply Poincar\'e-Dulac normal form reductions
infinitely many times to estimate 
\begin{align} \label{N3}
(\N_3)_n = 2 \sum_{\substack{n = n_1 - n_2 + n_3\\ n_2\ne n_1, n_3} }
\sum_{\substack{n_1 = m_1 - m_2  +m_3\\m_2 \ne m_1, m_3} }
\frac{e^{- i( \Phi(\bar{n})+ \Phi(\bar{m}) )t } }{2(n - n_1) (n - n_3)}
v_{m_1}\cj{v}_{m_2}v_{m_3}\cj{v}_{n_2}v_{n_3} .
\end{align}

\noi
In order to do so, we need to set up some notations.
In the following, the complex conjugate signs on  $v_{n_j}$ do not play any significant role,
and thus we drop the complex conjugate sign.
We also assume that all the Fourier coefficients $v_{n_j}$ are non-negative.

\medskip
When we apply differentiation by parts, 
we obtain terms like $\N_{22}$ in \eqref{N12},
where the time derivative may fall on any of the factors $v_{n_j}$.
In general, the structure of such terms can be very complicated, depending on where the time derivative falls.
In the following, we introduce the notion of trees (in particular, of ordered trees in Definition \ref{DEF:tree3})
for indexing such terms and frequencies arising in the general steps of the Poincar\'e-Dulac normal form reductions.
We point out that some of the definitions are similar, but that some are different 
from those in Christ \cite{CH2}. 

\begin{definition} \label{DEF:tree1} \rm
Given a partially ordered set $\TT$ with partial order $\leq$, 
we say that $b \in \TT$ 
with $b \leq a$ and $b \ne a$
is a child of $a \in \TT$,
if  $b\leq c \leq a$ implies
either $c = a$ or $c = b$.
If the latter condition holds, we also say that $a$ is the parent of $b$.

\end{definition}

\noi 
As in Christ \cite{CH2},
our trees in this paper 
refer to a particular subclass of usual trees with the following properties:
\begin{definition} \label{DEF:tree2} \rm
A tree $\TT$ is a finite partially ordered set satisfying
the following properties.
\begin{enumerate}
\item[(i)] Let $a_1, a_2, a_3, a_4 \in \TT$.
If $a_4 \leq a_2 \leq a_1$ and  
$a_4 \leq a_3 \leq a_1$, then we have $a_2\leq a_3$ or $a_3 \leq a_2$,

\item[(ii)]
A node $a\in \TT$ is called terminal, if it has no child.
A non-terminal node $a\in \TT$ is a node 
with  exactly three children denoted by $a_1, a_2$, and $a_3$,

\item[(iii)] There exists a maximal element $r \in \TT$ (called the root node) such that $a \leq r$ for all $a \in \TT$.
We assume that the root node is non-terminal,

\item[(iv)] $\TT$ consists of the disjoint union of $\TT^0$ and $\TT^\infty$,
where $\TT^0$ and $\TT^\infty$
denote  the collections of non-terminal nodes and terminal nodes, respectively.
\end{enumerate}

\end{definition}

\noi
Note that the number $|\TT|$ of nodes in a tree $\TT$ is $3j+1$ for some $j \in \mathbb{N}$,
where $|\TT^0| = j$ and $|\TT^\infty| = 2j + 1$.
Let us denote  the collection of trees in the $j$th generation (i.e. with $j$ parental nodes) by $T(j)$, i.e.
\begin{equation*}
T(j) := \{ \TT : \TT \text{ is a tree with } |\TT| = 3j+1 \}.
\end{equation*}


\noi
Now, we introduce the  notion of ordered trees.

\begin{definition} \label{DEF:tree3} \rm
We say that a sequence $\{ \TT_j\}_{j = 1}^J$ is a chronicle of $J$ generations, 
if 
\begin{enumerate}
\item[(i)] $\TT_j \in {T}(j)$ for each $j = 1, \dots, J$,
\item[(ii)]  $\TT_{j+1}$ is obtained by changing one of the terminal
nodes in $\TT_j$ into a non-terminal node (with three children), $j = 1, \dots, J - 1$.
\end{enumerate}

\noi
Given a chronicle $\{ \TT_j\}_{j = 1}^J$ of $J$ generations,  
we refer to $\TT_J$ as an {\it ordered tree} of the $J$th generation.
We denote the collection of the ordered trees of the $J$th generation
by $\mathfrak{T}(J)$.
Note that the cardinality of $\mathfrak{T}(J)$ is given by 
\begin{equation} \label{cj1}
 |\mathfrak{T}(J)| = 1\cdot3 \cdot 5 \cdot \cdots \cdot (2J-1) =: c_J.
 \end{equation}
\end{definition}

\begin{remark} \rm
Given two ordered trees $\TT_J$ and $\wt{\TT}_J$
of the $J$th generation, 
it may happen that $\TT_J = \wt{\TT}_J$ as trees (namely as graphs) 
according to Definition \ref{DEF:tree2},
while $\TT_J \ne \wt{\TT}_J$ as ordered trees according to Definition \ref{DEF:tree3}.
Namely, the notion of ordered trees comes with associated chronicles;
it encodes not only the shape of a tree
but also how it ``grew''.
Henceforth, when we refer to an ordered tree $\TT_J$ of the $J$th generation, 
it is understood that there is an underlying chronicle $\{ \TT_j\}_{j = 1}^J$.
\end{remark}

\begin{definition} \label{DEF:tree4} \rm
Given an ordered tree $\TT$ (of the $J$th generation for some $J \in \mathbb{N}$), 
we define an index function ${\bf n}: \TT \to \mathbb{Z}$ such that,
\begin{itemize}
\item[(i)] $n_a = n_{a_1} - n_{a_2} + n_{a_3}$ for $a \in \TT^0$,
where $a_1, a_2$, and $a_3$ denote the children of $a$,
\item[(ii)] $\{n_a, n_{a_2}\} \cap \{n_{a_1}, n_{a_3}\} = \emptyset$ for $a \in \TT^0$,

\item[(iii)] $|\mu_1| := |2(n_r - n_{r_1})(n_r - n_{r_3})| >N$, where $r$ is the root node,  
(recall that we are on $A_N^c$ - see \eqref{A_N}),
\end{itemize}

\noi
where  we identified ${\bf n}: \TT \to \mathbb{Z}$ 
with $\{n_a \}_{a\in \TT} \in \mathbb{Z}^\TT$.

We use 
$\mathfrak{N}(\TT) \subset \mathbb{Z}^\TT$ to denote the collection of such index functions ${\bf n}$.

\end{definition}

\begin{remark} \label{REM:terminal}
\rm Note that ${\bf n} = \{n_a\}_{a\in\TT}$ is completely determined
once we specify the values $n_a$ for $a \in \TT^\infty$.
\end{remark}


\medskip

Given an ordered tree 
$\TT_J$ of the $J$th generation with the chronicle $\{ \TT_j\}_{j = 1}^J$ 
and associated index functions ${\bf n} \in \mathfrak{N}(\TT_J)$,
we would like to keep track of the  ``generations'' of frequencies.
In the following,  we use superscripts to denote such generations of frequencies.

Fix ${\bf n} \in \mathfrak{N}(\TT_J)$.
Consider $\TT_1$ of the first generation.
Its nodes consist of the root node $r$
and its children $r_1, r_2, $ and $r_3$. 
We define the first generation of frequencies by
\[\big(n^{(1)}, n^{(1)}_1, n^{(1)}_2, n^{(1)}_3\big) :=(n_r, n_{r_1}, n_{r_2}, n_{r_3}).\]

\noi
From Definition \ref{DEF:tree4}, we have
\begin{equation*}
 n^{(1)} = n^{(1)}_1 - n^{(1)}_2 + n^{(1)}_3, \quad n^{(1)}_2\ne n^{(1)}_1, n^{(1)}_3.
\end{equation*}

 The ordered tree $\TT_2$ of the second generation is obtained from $\TT_1$ by
changing one of its terminal nodes $a = r_k \in \TT^\infty_1$ for some $k \in \{1, 2, 3\}$
into a non-terminal node.
Then, we define
the second generation of frequencies by
\[\big(n^{(2)}, n^{(2)}_1, n^{(2)}_2, n^{(2)}_3\big) :=(n_a, n_{a_1}, n_{a_2}, n_{a_3}).\]

\noi
Then, we have $n^{(2)} = n_k^{(1)}$ for some $k \in \{1, 2, 3\}$, 
\begin{equation*}
 n^{(2)} = n^{(2)}_1 - n^{(2)}_2 + n^{(2)}_3, \quad n^{(2)}_2\ne n^{(2)}_1, n^{(2)}_3,
\end{equation*}

\noi
where the last identities follow from Definition \ref{DEF:tree4}.

As we see later, this corresponds to introducing a new set of frequencies
after the first differentiation by parts.
For example, in \eqref{N3}, we assumed that the time derivative falls on $v_{n_1^{(1)}}$.
This corresponds to changing the ``first'' child $r_1 \in \TT_1^\infty$ into a non-terminal node,
and we have
\[\big(n^{(2)}, n^{(2)}_1, n^{(2)}_2, n^{(2)}_3\big) :=(n_1, m_1, m_2, m_3).\]

%
%

After  $j - 1$ steps, the ordered tree $\TT_j$ 
of the $j$th generation is obtained from $\TT_{j-1}$ by
changing one of its terminal nodes $a  \in \TT^\infty_{j-1}$
into a non-terminal node.
Then, we define
the $j$th generation of frequencies by
\[\big(n^{(j)}, n^{(j)}_1, n^{(j)}_2, n^{(j)}_3\big) :=(n_a, n_{a_1}, n_{a_2}, n_{a_3}).\]

\noi
As before, from Definition \ref{DEF:tree4}, we have
\begin{equation} \label{freq}
 n^{(j)} = n^{(j)}_1 - n^{(j)}_2 + n^{(j)}_3, \quad n^{(j)}_2\ne n^{(j)}_1, n^{(j)}_3.
\end{equation}

\noi
Also, we have $n^{(j)} = n^{(m)}_k (=n_a)$ for some $m \in \{1, \dots, j-1\}$
and $k \in \{1, 2, 3\}$,
since this corresponds to the frequency of some terminal node in $\TT_{j-1}$.

In the following, we pictorially present an example of an ordered tree $\TT \in \mathfrak{T}(4)$ with ${\bf n} \in \mathfrak{N}(\TT)$:
\begin{equation*} 
\xymatrix{
 & & &n^{(1)} \ar[dll] \ar[d] \ar[drr]\\
 &  n^{(1)}_1 =n^{(2)} \ar[dl] \ar[d]  \ar[dr] & & n^{(1)}_2& &  n^{(1)}_3 = n^{(3)} \ar[dl] \ar[d]  \ar[dr] &\\
n^{(2)}_1&n^{(2)}_2&n^{(2)}_3 = n^{(4)}\ar[dl] \ar[d]  \ar[dr] &&n^{(3)}_1&n^{(3)}_2&n^{(3)}_3\\
&n^{(4)}_1&n^{(4)}_2&n^{(4)}_3&&&}
\end{equation*}

\noi
Here, we have ornamented the nodes with the values of ${\bf n} 
= \{n_a\}_{a\in\TT}
\in \mathfrak{N}(\TT)$,
specifying the generations of frequencies as discussed above.

\medskip

We use $\mu_j$  to denote the corresponding phase factor introduced at the $j$th generation.
Namely, we have
\begin{align}
\mu_j & = \mu_j \big(n^{(j)}, n^{(j)}_1, n^{(j)}_2, n^{(j)}_3\big)
:= \big(n^{(j)}\big)^2 - \big(n_1^{(j)}\big)^2 + \big(n_2^{(j)}\big)^2- \big(n_3^{(j)}\big)^2 \notag \\
& = 2\big(n_2^{(j)} - n_1^{(j)}\big) \big(n_2^{(j)} - n_3^{(j)}\big)
= 2\big(n^{(j)} - n_1^{(j)}\big) \big(n^{(j)} - n_3^{(j)}\big), \label{mu}
\end{align}

\noi
where the last two equalities hold thanks to \eqref{freq}.

Lastly, for a fixed ordered tree $\TT$, we denote by $B_j = B_j(\TT)$
the set of all possible frequencies in the $j$th generation.

\subsection{Example: second and third generations}
Using these notations, we can rewrite $\N_3$ in \eqref{N3} as
\begin{align} \label{N^2}
\N^{(2)} (n): = (\N_3)_n = 
\sum_{\TT_2 \in \mathfrak{T}(2)}\sum_{\substack{{\bf n} \in \mathfrak{N}(\TT_2)\\{\bf n}_r = n}} 
\frac{e^{- i( \mu_1 + \mu_2 )t } }{\mu_1}
\prod_{a \in \TT^\infty_2} v_{n_{a}}. 
\end{align}

\noi
%
%
%
Here, we included the contribution of a similar term arising from  $\N_{222}$ in \eqref{N22},
i.e. when the time derivative falls on the second factor $v_{n_2}$.\footnote{As before, 
we only keep track of the absolute values of coefficients
in the following.
We may also drop the minus signs and the complex number $i$.}
Strictly speaking, the new phase factor may be $\mu_1 - \mu_2$
when the time derivative falls on the complex conjugate.
However, for our analysis, it makes no difference and hence
we simply write it as $\mu_1 + \mu_2$.
The same comments apply in the following.
Also, recall that the set of frequencies are restricted onto $A_N^c$ defined in \eqref{A_N}.
See Definition \ref{DEF:tree4} (iii).
In the following, similar restrictions on $\mu_j$ appear, 
but we suppress such restrictions for simplicity of notations, 
when it is clear from the context.

\medskip

Next, we divide the Fourier space into 
\begin{equation} \label{C1}
C_1 = \big\{ |\mu_1 + \mu_2| \lesssim 5^3 |\mu_1|^{1-\frac{1}{100}}\big\} 
\end{equation}

\noi
and its complement $C_1^c$.\footnote{Clearly, the number $5^3$ in \eqref{C1} 
does not make any difference at this point.
However, we insert it to match with \eqref{CJ}.
See also  \eqref{C2} and \eqref{C3}.}
Then, write
\begin{equation} \label{N^2_1}
\N^{(2)} = \N^{(2)}_1 + \N^{(2)}_2,
\end{equation}

\noi
where $\N^{(2)}_1$ is the restriction of $\N^{(2)}$
onto $C_1$
and
$\N^{(2)}_2 := \N^{(2)} - \N^{(2)}_1$.

\begin{lemma}\label{LEM:N^2_1}
Let $\N^{(2)}_1$ be as in \eqref{N^2_1}.
Then, we have
\begin{align} \label{N^2-1}
\| \N^{(2)}_1(v)\|_{L^2} & \lesssim N^{-\frac{1}{200}+} \|v\|_{L^2}^5,\\
\| \N^{(2)}_1(v) - \N^{(2)}_1(w)\|_{L^2} 
& \lesssim 
N^{-\frac{1}{200}+} \big(\|v\|_{L^2}^4 + \|w\|_{L^2}^4\big) \|v - w\|_{L^2}. \label{N^2-2}
\end{align}
\end{lemma}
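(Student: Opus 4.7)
My plan is to follow the template of the proof of Lemma~\ref{LEM:N11}, combining Cauchy--Schwarz with a double application of the divisor bound \eqref{divisor}. I will focus on \eqref{N^2-1}; the Lipschitz bound \eqref{N^2-2} will follow by writing $\N^{(2)}_1(v) - \N^{(2)}_1(w)$ as a telescoping sum of five quintilinear forms in which exactly one factor is $v-w$ and the remaining four are either $v$ or $w$, and then applying the same estimate to each piece.

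The key combinatorial input is a count, for fixed root frequency $n$ and fixed phase pair $(\mu_1, \mu_2)$, of the number of admissible index functions ${\bf n} \in \mathfrak{N}(\TT_2)$ with $n_r = n$. From $\mu_1 = 2(n^{(1)}-n^{(1)}_1)(n^{(1)}-n^{(1)}_3)$ and \eqref{divisor}, there are $O(|\mu_1|^{0+})$ choices for the first-generation pair $(n^{(1)}_1, n^{(1)}_3)$, with $n^{(1)}_2$ then determined by $n^{(1)} = n^{(1)}_1 - n^{(1)}_2 + n^{(1)}_3$. Once the non-terminal child $r_k$ of the root is fixed by $\TT_2$, the second-generation base $n^{(2)} = n^{(1)}_k$ is known, so a second application of \eqref{divisor} to $\mu_2 = 2(n^{(2)}-n^{(2)}_1)(n^{(2)}-n^{(2)}_3)$ gives $O(|\mu_2|^{0+})$ choices for $(n^{(2)}_1, n^{(2)}_3)$, with $n^{(2)}_2$ then determined. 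Because we are localized to $C_1$, we have $|\mu_2| \leq |\mu_1| + |\mu_1+\mu_2| \lesssim |\mu_1|$, so the total count is $O(|\mu_1|^{0+})$.

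By the triangle inequality and Cauchy--Schwarz applied to the outer sum over $(\TT_2, \mu_1, \mu_2, {\bf n})$ with $n_r = n$, I obtain
\begin{equation*}
|\N^{(2)}_1(n)|^2
\lesssim \bigg(\sum_{(\TT_2,\,\mu_1,\mu_2,\,{\bf n})} \frac{\mathbf 1_{C_1}}{|\mu_1|^2}\bigg)
\bigg(\sum_{(\TT_2,\,\mu_1,\mu_2,\,{\bf n})} \prod_{a \in \TT_2^\infty} |v_{n_a}|^2\bigg).
\end{equation*}
By the counting above and the definition \eqref{C1} of $C_1$, the first factor is controlled by
\begin{equation*}
\sum_{|\mu_1|>N} \frac{|\mu_1|^{0+}}{|\mu_1|^2} \cdot \#\bigl\{\mu_2 : |\mu_1+\mu_2| \lesssim |\mu_1|^{1-\frac{1}{100}}\bigr\}
\lesssim \sum_{|\mu_1|>N} |\mu_1|^{-1-\frac{1}{100}+} \lesssim N^{-\frac{1}{100}+}.
\end{equation*}
For the second factor, I sum in $n$ and re-index: the five terminal-node frequencies $\{n_a : a \in \TT_2^\infty\}$ determine, via the tree relations in Definition~\ref{DEF:tree4}, all remaining frequencies and the phases $\mu_1, \mu_2$. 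Hence, using $|\mathfrak{T}(2)| = 3$, the second factor summed over $n$ is bounded by $\|v\|_{L^2}^{10}$. Combining these and taking square roots yields \eqref{N^2-1}.

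The main obstacle is ensuring that the double divisor count does not cost a factor of size $|\mu_1|^{0+}|\mu_2|^{0+}$ large enough to overwhelm the gain from $1/|\mu_1|^2$. The restriction to $C_1$ plays a crucial dual role here: it enforces $|\mu_2| \lesssim |\mu_1|$ (so that $|\mu_2|^{0+}$ collapses into $|\mu_1|^{0+}$) and simultaneously restricts $\mu_2$ to a set of cardinality $\lesssim |\mu_1|^{1-1/100}$, leaving exactly the margin $|\mu_1|^{-1/100}$ needed to produce the positive power $N^{-1/200+}$. This delicate balance foreshadows why, in the general $J$th step of the iteration, the paper introduces phase thresholds of the specific form appearing in \eqref{CJ}.
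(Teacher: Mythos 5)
Your proof is correct and follows essentially the same route as the paper: both localize to $C_1$, use the divisor bound twice to count index functions for fixed $(n,\mu_1,\mu_2)$, use the $C_1$ constraint both to force $|\mu_2|\sim|\mu_1|$ (collapsing $|\mu_2|^{0+}$ into $|\mu_1|^{0+}$) and to bound the number of admissible $\mu_2$ by $O(|\mu_1|^{1-1/100})$, and then close with Cauchy--Schwarz, yielding $\sum_{|\mu_1|>N}|\mu_1|^{-1-1/100+}\lesssim N^{-1/100+}$ before the square root. The only cosmetic difference is that the paper packages the count of $\mu_2$'s and the divisor loss together into a single factor $|\mu|^{1-1/100+}$ inside the displayed Cauchy--Schwarz, whereas you spell out the two contributions separately; the content is identical.
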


\begin{proof}
We only prove \eqref{N^2-1} since \eqref{N^2-2} follows in a similar manner.
Since we are on $A_N^c$ (see \eqref{A_N}), we have $|\mu_1| >N$.
Next, we use the divisor counting argument as in the proof of Lemma \ref{LEM:N11}.
It follows from \eqref{divisor}
that for fixed $n$ and  $\mu_1$,
there are at most $o(|\mu_1|^{0+})$ many choices for $n^{(1)}_1$ and $n^{(1)}_3$ on $B_1$
(and hence for $n^{(1)}_2$ from $n^{(1)} = n^{(1)}_1 - n^{(1)}_2 + n^{(1)}_3$).
Similarly, 
for fixed $n^{(2)} = n_1^{(1)}$ and  $\mu_2$,
there are at most $o(|\mu_2|^{0+})$ many choices for $n^{(2)}_1$, $n^{(2)}_2$, and $n^{(2)}_3$ 
on $B_2$.

The main point is to control $|\mu_2|$ in terms of $|\mu_1|$.
From \eqref{C1}, we have $|\mu_2|\sim|\mu_1|$.
Moreover, for fixed $|\mu_1|$, there are at most $O(|\mu_1|^{1-\frac{1}{100}})$ many 
choices for $\mu_2$.
Hence,  by Cauchy-Schwarz inequality, we have
\begin{align*}
\| \N^{(2)}_1(v)\|_{L^2}
& \lesssim 
\sum_{\TT_2 \in \mathfrak{T}(2)}
\bigg(\sum_n \bigg|
\sum_{|\mu|> N } 
\sum_{\substack{{\bf n} \in \mathfrak{N}(\TT_2)\\{\bf n}_r = n\\{\mu_1 = \mu}}} 
\frac{1}{|\mu_1|} \prod_{a \in \TT^\infty_2} v_{n_a} 
\bigg|^2\bigg)^\frac{1}{2}\\
& \lesssim \bigg\{ \sum_n 
\bigg( 
\sum_{|\mu| >N } \frac{1}{|\mu|^2}\, |\mu|^{1-\frac{1}{100}+}\bigg)
 \bigg(\sum_{\substack{{\bf n} \in \mathfrak{N}(\TT_2)\\{\bf n}_r = n}} 
\prod_{a \in \TT^\infty_2} |v_{n_a}|^2
\bigg)\bigg\}^\frac{1}{2}\\
& \leq N^{-\frac{1}{200}+} \|v\|_{L^2}^5. \qedhere
\end{align*}
\end{proof}

%
%
%

Next, we apply (the second step of) Poincar\'e-Dulac normal form reduction
to $\N^{(2)}_2$. 
Note that we have 
\begin{equation} \label{mu2}
|\mu_1 + \mu_2|  \gg 5^3 |\mu_1|^{1-\frac{1}{100}} > 5^3 N^{1-\frac{1}{100}}
\end{equation}

\noi on the support 
of $\N^{(2)}_2$, i.e. on $C_1^c$.
After differentiation by parts, we obtain
\begin{align} \label{N^3}
\N^{(2)}_2 (n)
& = \dt \bigg[\sum_{\TT_2 \in \mathfrak{T}(2)}\sum_{\substack{{\bf n} \in \mathfrak{N}(\TT_2)\\{\bf n}_r = n}} 
\frac{e^{- i( \mu_1 + \mu_2 )t } }{\mu_1(\mu_1+\mu_2)}
\prod_{a \in \TT^\infty_2} v_{n_{a}} \bigg]\notag \\
& \hphantom{X} -  
\sum_{\TT_2 \in \mathfrak{T}(2)}
\sum_{\substack{{\bf n} \in \mathfrak{N}(\TT_2)\\{\bf n}_r = n} }
\frac{e^{- i( \mu_1 + \mu_2)t } }{\mu_1(\mu_1+\mu_2)}
\, 
\dt \bigg(\prod_{a \in \TT^\infty_2 } v_{n_{a}}\bigg) \notag \\
& = \dt \bigg[\sum_{\TT_2 \in \mathfrak{T}(2)}\sum_{\substack{{\bf n} \in \mathfrak{N}(\TT_2)\\{\bf n}_r = n}} 
\frac{e^{- i( \mu_1 + \mu_2 )t } }{\mu_1(\mu_1+\mu_2)}
\prod_{a \in \TT^\infty_2} v_{n_{a}} \bigg]\notag \\
& \hphantom{X} -  
\sum_{\TT_2 \in \mathfrak{T}(2)}
\sum_{b \in\TT^\infty_2} 
\sum_{\substack{{\bf n} \in \mathfrak{N}(\TT_2)\\{\bf n}_r = n} }
\frac{e^{- i( \mu_1 + \mu_2)t } }{\mu_1(\mu_1+\mu_2)}
\, \dt v_{n_b}
\prod_{a \in \TT^\infty_2 \setminus \{b\}} v_{n_{a}} \notag \\
& = \dt \bigg[\sum_{\TT_2 \in \mathfrak{T}(2)}\sum_{\substack{{\bf n} \in \mathfrak{N}(\TT_2)\\{\bf n}_r = n}} 
\frac{e^{- i( \mu_1 + \mu_2 )t } }{\mu_1(\mu_1+\mu_2)}
\prod_{a \in \TT^\infty_2} v_{n_{a}} \bigg]\notag \\
& \hphantom{X} -  
\sum_{\TT_2 \in \mathfrak{T}(2)}
\sum_{b \in\TT^\infty_2} 
\sum_{\substack{{\bf n} \in \mathfrak{N}(\TT_2)\\{\bf n}_r = n} }
\frac{e^{- i( \mu_1 + \mu_2)t } }{\mu_1(\mu_1+\mu_2)}
\, (\RR_1-\RR_2)_{n_b}
\prod_{a \in \TT^\infty_2 \setminus \{b\}} v_{n_{a}} \notag \\
& \hphantom{X} - \sum_{\TT_3 \in \mathfrak{T}(3)}\sum_{\substack{{\bf n} \in \mathfrak{N}(\TT_3)\\{\bf n}_r = n}} 
\frac{e^{- i( \mu_1 + \mu_2 +\mu_3)t } }{\mu_1(\mu_1+\mu_2)}
\,\prod_{a \in \TT^\infty_3} v_{n_{a}} \notag\\
& =: \dt \N^{(3)}_0(n) + \N^{(3)}_r(n) + \N^{(3)}(n).
\end{align}

\noi
In the third equality, we used \eqref{NLS4}
and replaced $\dt v_{n_b}$ by 
the resonant part $(\RR_1 -\RR_2) (n_b)$ and the non-resonant part $\N_1(n_b)$.
As for the contribution from the non-resonant part, we replace 
the frequency $n_b$ by $n_{b_1}, n_{b_2}$, and $n_{b_3}$
such that $n_b = n_{b_1} - n_{b_2} + n_{b_3}$
and $n_{b_2} \ne n_{b_1}, n_{b_3}$,
which corresponds to extending the tree $\TT_2 \in \mathfrak{T}(2)$
(and ${\bf n }\in \mathfrak{N}(\TT_2)$)
to $\TT_3 \in \mathfrak{T}(3)$
(and to  ${\bf n }\in \mathfrak{N}(\TT_3)$, respectively)
by replacing the terminal node $b \in \TT^\infty_2$
into a non-terminal node with three children $b_1, b_2,$ and $b_3$.

\medskip

First, let us estimate the easier terms $\N^{(3)}_0$ and $\N^{(3)}_r$.

\begin{lemma}\label{LEM:N^3_0}
Let $\N^{(3)}_0$ be as in \eqref{N^3}.
Then, we have
\begin{align} 
\| \N^{(3)}_0(v)\|_{L^2} & \lesssim N^{-1 + \frac{1}{200}+} \|v\|_{L^2}^5, \label{N^3_0-1}\\
\| \N^{(3)}_0(v) - \N^{(3)}_0(w)\|_{L^2} 
& \lesssim 
N^{-1 + \frac{1}{200}+} \big(\|v\|_{L^2}^4 + \|w\|_{L^2}^4\big) \|v - w\|_{L^2}. \label{N^3_0-2}
\end{align}
\end{lemma}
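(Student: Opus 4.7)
The plan is to mirror the proof of Lemma \ref{LEM:N^2_1}, exploiting the extra gain provided by the factor $(\mu_1+\mu_2)^{-1}$ that appears after the second differentiation by parts. Recall that on the support of $\N^{(3)}_0$ we are working on $C_1^c \cap A_N^c$ (the region where $\N^{(2)}_2$ is nonzero), so by \eqref{mu2} together with $|\mu_1| > N$ we have the two-sided lower bound $|\mu_1| > N$ and $|\mu_1 + \mu_2| \gtrsim |\mu_1|^{1-\frac{1}{100}} > N^{1-\frac{1}{100}}$. Since the cardinality of $\mathfrak{T}(2)$ is the finite constant $c_2 = 3$ from \eqref{cj1}, I will fix a single ordered tree $\TT_2 \in \mathfrak{T}(2)$ in the argument.

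For the first estimate \eqref{N^3_0-1}, I would begin by bounding
\[
|\N^{(3)}_0(v)(n)| \;\leq\; \sum_{\substack{{\bf n}\in \mathfrak{N}(\TT_2)\\ {\bf n}_r = n}} \frac{1}{|\mu_1|\,|\mu_1+\mu_2|} \prod_{a\in \TT_2^\infty} |v_{n_a}|
\]
and applying Cauchy--Schwarz in the variables ${\bf n}$ to split off the weight $(\mu_1(\mu_1+\mu_2))^{-2}$ from the Fourier coefficients. After squaring and summing in $n$, the Fourier coefficient piece reduces (by dropping the inequality constraints in $\mathfrak{N}(\TT_2)$ and summing freely over the five terminal-node frequencies, cf.\ Remark \ref{REM:terminal}) to $\|v\|_{L^2}^{10}$.

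The core of the argument is then to bound the supremum over $n$ of the weight sum
\[
\sup_n \sum_{\substack{{\bf n}\in \mathfrak{N}(\TT_2)\\ {\bf n}_r = n}} \frac{1}{|\mu_1|^2\,|\mu_1+\mu_2|^2}.
\]
Here I would change the summation variables to $(\mu_1,\mu_2)$ and use the divisor bound \eqref{divisor} twice: for fixed $n$ and $\mu_1$, there are $O(|\mu_1|^{0+})$ choices of $(n_1^{(1)}, n_3^{(1)})$ (and hence of $n_2^{(1)}$); and for fixed $n^{(2)}=n_1^{(1)}$ and $\mu_2$, there are $O(|\mu_2|^{0+})$ choices of the second-generation frequencies. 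After substituting $\nu := \mu_1+\mu_2$ (so that $|\mu_2|^{0+} \lesssim (|\mu_1|\,|\nu|)^{0+}$), the sum factorizes:
\[
\sum_{|\mu_1|>N}\frac{|\mu_1|^{0+}}{|\mu_1|^2} \sum_{|\nu| \gtrsim |\mu_1|^{1-\frac{1}{100}}} \frac{|\nu|^{0+}}{|\nu|^2} \;\lesssim\; \sum_{|\mu_1|>N} \frac{|\mu_1|^{-(1-\frac{1}{100})+}}{|\mu_1|^2} \;\lesssim\; N^{-2+\frac{1}{100}+},
\]
whose square root gives the desired factor $N^{-1+\frac{1}{200}+}$.

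For the Lipschitz estimate \eqref{N^3_0-2}, I would use the standard telescoping identity
\[
\prod_{a\in\TT_2^\infty} v_{n_a} - \prod_{a\in\TT_2^\infty} w_{n_a} = \sum_{b\in\TT_2^\infty}(v_{n_b}-w_{n_b})\prod_{a<b} v_{n_a}\prod_{a>b} w_{n_a}
\]
(with any fixed ordering on $\TT_2^\infty$), and then apply the estimate above to each of the five resulting terms, distributing four $L^2$-norms between $v$ and $w$ to get the factor $\|v\|_{L^2}^4 + \|w\|_{L^2}^4$. The only step that requires real care is the bookkeeping of the divisor-counting arguments through both generations simultaneously while preserving the region constraints on $\mu_1$ and $\mu_1+\mu_2$; once the $(\mu_1,\nu)$ change of variables is in place, the geometric series sums yield exactly the exponent $-1+\frac{1}{200}$ claimed.
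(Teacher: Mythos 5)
Your proposal is correct and follows essentially the same route as the paper: Cauchy--Schwarz to separate the phase weights from the Fourier coefficients, divisor counting in each generation, and summation over the phase variables $(\mu_1,\wt\mu_2)$ using $|\mu_1|>N$ and $|\wt\mu_2|\gtrsim N^{1-\frac{1}{100}}$ to produce the factor $N^{-1+\frac{1}{200}+}$. The only cosmetic difference is that you bound $|\mu_2|^{0+}\lesssim(|\mu_1||\wt\mu_2|)^{0+}$ where the paper uses $|\mu_2|\lesssim\max(|\mu_1|,|\wt\mu_2|)$; these are equivalent for the $\epsilon$-loss being absorbed, and the telescoping argument for the Lipschitz bound is the standard one the paper elides.
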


\begin{proof}
We only prove \eqref{N^3_0-1} since \eqref{N^3_0-2} follows in a similar manner.
As in the proof of Lemma \ref{LEM:N^2_1}, 
it follows from \eqref{divisor}
that for fixed $n$ and  $\mu_1$,
there are at most $o(|\mu_1|^{0+})$ many choices for $n^{(1)}_1$, $n^{(1)}_2$, and $n^{(1)}_3$ on $B_1$.
Similarly, 
for fixed $n^{(2)} = n_1^{(1)}$ and  $\mu_2$,
there are at most $o(|\mu_2|^{0+})$ many choices for $n^{(2)}_1$, $n^{(2)}_2$, and $n^{(2)}_3$ 
on $B_2$.

With $\wt{\mu}_2 = \mu_1 + \mu_2$, 
we have $|\mu_2| \lesssim \max (|\mu_1|, |\wt{\mu}_2|)$.
Then, by Cauchy-Schwarz inequality, we have
\begin{align*}
\| \N^{(3)}_0(v)\|_{L^2}
& \lesssim \sum_{\TT_2 \in \mathfrak{T}(2)}\bigg\{ \sum_n 
\bigg(\sum_{\substack{|\mu_1| >N\\ |\wt{\mu}_2| >5^3 N^{1-\frac{1}{100}}}}
\frac{1}{|\mu_1|^{2}|\wt{\mu}_2|^{2}} \, |\mu_1|^{0+}|\mu_2|^{0+}
\bigg)\\
 & \hphantom{XXXXXXXXX} \times 
  \bigg(\sum_{\substack{{\bf n} \in \mathfrak{N}(\TT_2)\\{\bf n}_r = n}} 
\prod_{a \in \TT^\infty_2} |v_{n_a}|^2
\bigg)\bigg\}^\frac{1}{2}\\
& \lesssim N^{-1 + \frac{1}{200}+} \|v\|_{L^2}^5. \qedhere
\end{align*}
\end{proof}

\begin{lemma}\label{LEM:N^3_r}
Let $\N^{(3)}_r$ be as in \eqref{N^3}.
Then, we have
\begin{align} 
\| \N^{(3)}_r(v)\|_{L^2} & \lesssim N^{-1 + \frac{1}{200}+} \|v\|_{L^2}^7, \label{N^3_r-1}\\
\| \N^{(3)}_r(v) - \N^{(3)}_r(w)\|_{L^2} 
& \lesssim 
N^{-1 + \frac{1}{200}+} \big(\|v\|_{L^2}^6 + \|w\|_{L^2}^6\big) \|v - w\|_{L^2}. \label{N^3_r-2}
\end{align}
\end{lemma}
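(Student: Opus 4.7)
The plan is to mirror the argument used for $\N^{(3)}_0$ in Lemma \ref{LEM:N^3_0}, while absorbing the cubic factor $(\RR_1-\RR_2)_{n_b}$ into the Cauchy--Schwarz sum via the $L^2$-bound supplied by Lemma \ref{LEM:R1}. As in the preceding lemmata, I will only treat \eqref{N^3_r-1}; the Lipschitz estimate \eqref{N^3_r-2} follows along the same lines, using the Lipschitz half of Lemma \ref{LEM:R1} to handle the difference $(\RR_1-\RR_2)(v) - (\RR_1-\RR_2)(w)$ in place of $(\RR_1-\RR_2)(v)$.

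First I would setup the frequency analysis exactly as in Lemma \ref{LEM:N^3_0}. The propagator $\frac{1}{\mu_1(\mu_1+\mu_2)}$ is identical, and the underlying ordered tree is still $\TT_2 \in \mathfrak{T}(2)$, so that the product runs over the five terminal nodes of $\TT_2$ with one of them (namely $b$) carrying the cubic factor $(\RR_1-\RR_2)_{n_b}$ instead of $v_{n_b}$. The restrictions are unchanged: $|\mu_1|>N$ (from $A_N^c$) and $|\widetilde{\mu}_2| := |\mu_1+\mu_2| > 5^3 N^{1-\frac{1}{100}}$ (from $C_1^c$, see \eqref{mu2}). As before, $|\mu_2| \lesssim \max(|\mu_1|,|\widetilde{\mu}_2|)$, and the divisor bound \eqref{divisor} gives, for each fixed $n$ and $\mu_1$, at most $o(|\mu_1|^{0+})$ choices of $(n^{(1)}_1,n^{(1)}_2,n^{(1)}_3)\in B_1$, and for fixed $n^{(2)}=n^{(1)}_k$ and $\mu_2$, at most $o(|\mu_2|^{0+})$ choices of $(n^{(2)}_1,n^{(2)}_2,n^{(2)}_3)\in B_2$. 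Summing $b$ and $\TT_2$ introduces only a harmless combinatorial constant.

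Next I would apply Cauchy--Schwarz in $n$, splitting off the $(\mu_1,\widetilde{\mu}_2)$-summation from the product of absolute squares of the terminal factors. The weight produced by the divisor counting and the constraint region is exactly the one already estimated in Lemma \ref{LEM:N^3_0}:
\begin{equation*}
\sum_{\substack{|\mu_1|>N\\|\widetilde{\mu}_2|>5^3 N^{1-\frac{1}{100}}}}\frac{|\mu_1|^{0+}|\mu_2|^{0+}}{|\mu_1|^{2}|\widetilde{\mu}_2|^{2}}
\ \lesssim\ N^{-2+\frac{1}{100}+}.
\end{equation*}
What remains is the sum over ${\bf n}\in\mathfrak{N}(\TT_2)$ of the product of absolute squares over the terminal nodes of $\TT_2$, with $v_{n_b}$ replaced by $(\RR_1-\RR_2)_{n_b}$. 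Since the terminal frequencies $\{n_a\}_{a\in\TT_2^\infty}$ determine ${\bf n}$ freely (Remark \ref{REM:terminal}), this factorizes as
\begin{equation*}
\Big\|(\RR_1-\RR_2)(v)\Big\|_{\ell^2_{n_b}}^{2}\ \prod_{a\in\TT_2^\infty\setminus\{b\}}\|v\|_{L^2}^{2}\ \lesssim\ \|v\|_{L^2}^{6}\cdot\|v\|_{L^2}^{8}\ =\ \|v\|_{L^2}^{14},
\end{equation*}
where I invoked Lemma \ref{LEM:R1} to bound $\|(\RR_1-\RR_2)(v)\|_{\ell^2}\lesssim\|v\|_{L^2}^{3}$. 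Combining these two bounds and taking the square root yields \eqref{N^3_r-1} with the exponent $-1+\tfrac{1}{200}+$.

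The main (and essentially only) point to track is arithmetic: the cubic resonant factor costs two extra powers of $\|v\|_{L^2}$ relative to $\N^{(3)}_0$, so the degree rises from five to seven while the $N$-dependence is identical. There is no genuine obstacle here, because all multilinear ingredients are already in place; the proof amounts to verifying that Cauchy--Schwarz respects the substitution $v_{n_b}\mapsto(\RR_1-\RR_2)_{n_b}$, which it does thanks to the $L^2$-boundedness of $\RR_1-\RR_2$ provided by Lemma \ref{LEM:R1}.
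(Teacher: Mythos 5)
Your proposal is correct and follows essentially the same route as the paper: the paper's proof simply states that the lemma follows from Lemmata \ref{LEM:N^3_0} and \ref{LEM:R1}, noting that the sum over $b\in\TT_2^\infty$ contributes a factor of $5$, and your write-up just makes the Cauchy--Schwarz step and the substitution $v_{n_b}\mapsto(\RR_1-\RR_2)_{n_b}$ explicit. The bookkeeping (degree $7$ in $\|v\|_{L^2}$, same $N^{-1+\frac{1}{200}+}$ gain as $\N^{(3)}_0$) checks out.
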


\begin{proof}
This lemma  follows from Lemmata \ref{LEM:N^3_0} and \ref{LEM:R1}.
Note that, given $\TT_2 \in \mathfrak{T}(2)$, we have $\#\{b: b \in \TT_2\} = 5$.
\end{proof}

Now, we treat $\N^{(3)}$.
As before, we write
\begin{equation} \label{N^3_1}
\N^{(3)} = \N^{(3)}_1 + \N^{(3)}_2,
\end{equation}

\noi
where $\N^{(3)}_1$ is the restriction of $\N^{(3)}$
onto 
\begin{equation} \label{C2}
C_2 = \big\{ |\wt{\mu}_3| \lesssim 7^3|\wt{\mu}_2|^{1-\frac{1}{100}}\big\} 
\cup \big\{ |\wt{\mu}_3| \lesssim 7^3 |\mu_1|^{1-\frac{1}{100}}\big\}, 
\end{equation}

\noi
where $\wt{\mu}_2 := \mu_1 + \mu_2$ and $\wt{\mu}_3 := \mu_1 + \mu_2 + \mu_3$,
and
$\N^{(3)}_2 := \N^{(3)} - \N^{(3)}_1$.

\begin{lemma}\label{LEM:N^3_1}
Let $\N^{(3)}_1$ be as in \eqref{N^3_1}.
Then, we have
\begin{align} \label{N^3-1}
\| \N^{(3)}_1(v)\|_{L^2} & \lesssim N^{-\frac{1}{2}+} \|v\|_{L^2}^7,\\
\| \N^{(3)}_1(v) - \N^{(3)}_1(w)\|_{L^2} 
& \lesssim 
N^{-\frac{1}{2}+} \big(\|v\|_{L^2}^6 + \|w\|_{L^2}^6\big) \|v - w\|_{L^2}. \label{N^3-2}
\end{align}
\end{lemma}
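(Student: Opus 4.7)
My plan is to mimic the proof of Lemma \ref{LEM:N^2_1} for $\N^{(2)}_1$, but with one additional frequency generation to keep track of, and the crucial new ingredient being the dyadic analysis of $\tilde\mu_3$ on the set $C_2$ defined in \eqref{C2}. As in the proof of \eqref{N^2-2}, I will only establish \eqref{N^3-1} since \eqref{N^3-2} follows by writing a telescoping difference and applying the analogous estimate to each term.

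First, I would apply Cauchy--Schwarz inside the $n$-sum to split
\[
\bigg|\sum_{\substack{{\bf n} \in \mathfrak{N}(\TT_3)\\ {\bf n}_r = n}}
\frac{1}{\mu_1 (\mu_1 + \mu_2)} \prod_{a \in \TT^\infty_3} v_{n_a}\bigg|^2
\leq \bigg(\sum_{\substack{{\bf n} \in \mathfrak{N}(\TT_3)\\ {\bf n}_r = n\\ C_2 \cap C_1^c \cap A_N^c}}
\frac{1}{|\mu_1|^2 |\tilde\mu_2|^2}\bigg)
\bigg(\sum_{\substack{{\bf n} \in \mathfrak{N}(\TT_3)\\ {\bf n}_r = n}} \prod_{a \in \TT^\infty_3} |v_{n_a}|^2\bigg),
\]
and sum in $n$. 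Since $|\mathfrak{T}(3)|$ is a finite constant, and the second factor telescopes through the $7$ terminal nodes to give $\|v\|_{L^2}^{14}$, the matter reduces to bounding the multiplier sum uniformly in $n$.

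To do so, I would reorganize the inner sum by first summing over the three phase factors $\mu_1, \tilde\mu_2, \tilde\mu_3$ and then, for each fixed triple, counting the number of compatible index functions ${\bf n}$. The divisor bound \eqref{divisor} applied at each generation gives $o(|\mu_1|^{0+} |\mu_2|^{0+} |\mu_3|^{0+})$ many choices of ${\bf n}$. On our support, $|\tilde\mu_2| \gg |\mu_1|^{1 - 1/100}$ by \eqref{mu2}, so $|\mu_2| = |\tilde\mu_2 - \mu_1| \sim |\tilde\mu_2|$; and on either branch of $C_2$ we have $|\tilde\mu_3| \lesssim |\tilde\mu_2|$, so $|\mu_3| = |\tilde\mu_3 - \tilde\mu_2| \sim |\tilde\mu_2|$ as well. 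Hence the counting factor contributes only $|\mu_1|^{0+}|\tilde\mu_2|^{0+}$, which can be absorbed.

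The main step, and the place where I expect the delicate bookkeeping to live, is bounding the range of $\tilde\mu_3$. I would split $C_2$ into its two subsets. On $\{|\tilde\mu_3| \lesssim |\tilde\mu_2|^{1-1/100}\}$, the $\tilde\mu_3$-sum contributes a factor $|\tilde\mu_2|^{1-1/100}$, and summing
\[
\sum_{|\mu_1|>N}\ \sum_{|\tilde\mu_2|>5^3|\mu_1|^{1-\frac{1}{100}}} \frac{|\mu_1|^{0+}\,|\tilde\mu_2|^{1-\frac{1}{100}+0+}}{|\mu_1|^2|\tilde\mu_2|^2}
\lesssim \sum_{|\mu_1|>N} \frac{|\mu_1|^{0+}}{|\mu_1|^2}\, (|\mu_1|^{1-\frac{1}{100}})^{-\frac{1}{100}+0+}
\lesssim N^{-1-\frac{1}{100}+0+}.
\]
On $\{|\tilde\mu_3| \lesssim |\mu_1|^{1-1/100}\}$, the $\tilde\mu_3$-range contributes $|\mu_1|^{1-1/100}$ and the $\tilde\mu_2$-sum gives $(|\mu_1|^{1-1/100})^{-1+0+}$, so the total is
\[
\sum_{|\mu_1|>N} |\mu_1|^{-2+0+}\cdot |\mu_1|^{1-\frac{1}{100}+0+} \cdot |\mu_1|^{-(1-\frac{1}{100})+0+}
\lesssim N^{-1+0+}.
\]
Taking the square root of the worse of the two bounds yields the desired $N^{-\frac{1}{2}+}$ factor, and the seven powers of $\|v\|_{L^2}$ come from the terminal-node Cauchy--Schwarz factor. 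This completes the plan; the Lipschitz estimate \eqref{N^3-2} follows by the same scheme applied to the multilinear difference $\prod v_{n_a} - \prod w_{n_a}$, distributing $v-w$ across each of the seven factors.
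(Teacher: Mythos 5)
Your plan matches the paper's proof essentially verbatim: the same Cauchy--Schwarz splitting of the inner sum, the same divisor-counting bound $o(|\mu_1|^{0+}|\mu_2|^{0+}|\mu_3|^{0+})$ for each fixed triple of phase factors, the same two-branch analysis of $C_2$, and the same arithmetic in $N$. One small imprecision worth noting: $|\tilde\mu_2|\gg|\mu_1|^{1-1/100}$ does \emph{not} force $|\mu_2|\sim|\tilde\mu_2|$ (one could have $|\mu_1|\gg|\tilde\mu_2|$, in which case $|\mu_2|\sim|\mu_1|$); the correct statement, as in the paper, is $|\mu_2|\lesssim\max(|\mu_1|,|\tilde\mu_2|)$, which still lets you absorb $|\mu_2|^{0+}$ into $|\mu_1|^{0+}|\tilde\mu_2|^{0+}$, so the rest of your argument goes through unchanged.
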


\begin{proof}
We only prove \eqref{N^3-1} since \eqref{N^3-2} follows in a similar manner.
The proof is very similar to that of Lemma \ref{LEM:N^2_1},
i.e. we use the divisor counting argument.
It follows from \eqref{divisor}
that for fixed $n$ and  $\mu_1$,
there are at most $o(|\mu_1|^{0+})$ many choices for $n^{(1)}_1, n^{(1)}_2,$ and $ n^{(1)}_3$ on $B_1$.
For fixed  $n^{(2)} = n_1^{(1)}$ and $\mu_2$,
there are at most $o(|\mu_2|^{0+})$ many choices for $n^{(2)}_1$, $n^{(2)}_2$, and $n^{(2)}_3$ 
on $B_2$.
Moreover, for fixed  $n^{(3)} = n_1^{(2)}$ and $\mu_3$,
there are at most $o(|\mu_3|^{0+})$ many choices for $n^{(3)}_1$, $n^{(3)}_2$, and $n^{(3)}_3$ 
on $B_3$.

First, we assume that $|\wt{\mu}_3| \lesssim |\wt{\mu}_2|^{1-\frac{1}{100}}$
holds
in \eqref{C2}.
Then, we have $|\mu_3|\sim|\wt{\mu}_2|$ since $\wt{\mu}_3 = \wt{\mu}_2 + \mu_3$.
Moreover, for fixed $|\wt{\mu}_2|$, namely for fixed $\mu_1$ and $\mu_2$,
there are at most $O(|\wt{\mu}_2|^{1-\frac{1}{100}})$ many 
choices for $\wt{\mu}_3$,  and hence for $\mu_3 = \wt{\mu}_3 - \wt{\mu}_2$.
Also, we have
$|\mu_2| \lesssim \max(|\mu_1|, |\wt{\mu}_2|)$
and \eqref{mu2}.
Then, by Cauchy-Schwarz inequality, we have
\begin{align}
\| \N^{(3)}_1(v)\|_{L^2}
& \lesssim \sum_{\TT_3 \in \mathfrak{T}(3)}
\bigg(\sum_n \bigg|
 \sum_{\substack{{\bf n} \in \mathfrak{N}(\TT_3)\\{\bf n}_r = n}} 
 \frac{1}{|\mu_1||\wt{\mu}_2|}
\prod_{a \in \TT^\infty_3} v_{n_{a}} 
\bigg|^2\bigg)^\frac{1}{2}   \notag \\
& \lesssim \bigg\{ \sum_n 
\bigg(\sum_{\substack{|\mu_1|>N\\ |\wt{\mu}_2| >5^3N^{1-\frac{1}{100}}}} 
\frac{1}{|\mu_1|^{2}|\wt{\mu}_2|^{2}} |\mu_1|^{0+}|\mu_2|^{0+}|\mu_3|^{0+}
|\wt{\mu}_2|^{1-\frac{1}{100}}\bigg) \notag \\
 & \hphantom{XXXXXX} \times \bigg( \sum_{\substack{{\bf n} \in \mathfrak{N}(\TT_3)\\{\bf n}_r = n}} 
\prod_{a \in \TT^\infty_3} |v_{n_{a}}|^2 
\bigg)\bigg\}^\frac{1}{2} \notag \\
& \lesssim \bigg\{ \sum_n 
\bigg(\sum_{\substack{|\mu_1|>N\\ |\wt{\mu}_2| >5^3N^{1-\frac{1}{100}}}} 
\frac{1}{|\mu_1|^{2-}|\wt{\mu}_2|^{1+\frac{1}{100}-}}\bigg) 
  \bigg(\sum_{\substack{{\bf n} \in \mathfrak{N}(\TT_3)\\{\bf n}_r = n}} 
\prod_{a \in \TT^\infty_3} |v_{n_{a}}|^2 
\bigg)\bigg\}^\frac{1}{2} \notag\\
& \lesssim N^{-\frac{1}{2}} \|v\|_{L^2}^7.  \label{N^3-3}
\end{align}

\noi
If $|\wt{\mu}_3| \lesssim |\mu_1|^{1-\frac{1}{100}}$ holds in \eqref{C2},
then, for fixed $\mu_1$ and $\mu_2$, 
there are at most $O(|\mu_1|^{1-\frac{1}{100}})$ many 
choices for $\wt{\mu}_3$, and hence for $\mu_3$.
By repeating the same computation, we obtain 
$|\mu_1|^{-1-\frac{1}{100}+}|\wt{\mu}_2|^{-2+}$
in \eqref{N^3-3}, yielding \eqref{N^3-1} with $N^{-\frac{1}{2}+}$.
\end{proof}

Next, we apply (the third step of) Poincar\'e-Dulac normal form reduction
to $\N^{(3)}_2$. 
Note that we have 
\begin{equation} \label{mu3}
|\wt{\mu}_3| = |\mu_1 + \mu_2+\mu_3|  \gg 7^3 |\mu_1|^{1-\frac{1}{100}} > 7^3 N^{1-\frac{1}{100}}
\end{equation}

\noi on the support 
of $\N^{(3)}_2$, i.e. on $C_2^c$.
After differentiation by parts, we obtain
\begin{align} \label{N^4}
\N^{(3)}_2 (n)
& = \dt \bigg[-\sum_{\TT_3 \in \mathfrak{T}(3)}\sum_{\substack{{\bf n} \in \mathfrak{N}(\TT_3)\\{\bf n}_r = n}} 
\frac{e^{- i( \mu_1 + \mu_2 +\mu_3)t } }{\mu_1(\mu_1+\mu_2)( \mu_1 + \mu_2 +\mu_3)}
\, \prod_{a \in \TT^\infty_3} v_{n_{a}}
\bigg]\notag \\
& \hphantom{X} + 
\sum_{\TT_3 \in \mathfrak{T}(3)}
\sum_{b \in\TT^\infty_3} 
\sum_{\substack{{\bf n} \in \mathfrak{N}(\TT_3)\\{\bf n}_r = n} }
\frac{e^{- i( \mu_1 + \mu_2+\mu_3)t } }{\mu_1(\mu_1+\mu_2)( \mu_1 + \mu_2 +\mu_3)}
\, (\RR_1-\RR_2)_{n_b}
\prod_{a \in \TT^\infty_3 \setminus \{b\}} v_{n_{a}}  \notag \\
& \hphantom{X} + 
\sum_{\TT_4 \in \mathfrak{T}(4)}\sum_{\substack{{\bf n} \in \mathfrak{N}(\TT_4)\\{\bf n}_r = n}} 
\frac{e^{- i( \mu_1 + \mu_2 +\mu_3+\mu_4)t } }{\mu_1(\mu_1+\mu_2)( \mu_1 + \mu_2 +\mu_3)}
\,\prod_{a \in \TT^\infty_4} v_{n_{a}}  \notag\\
& =: \dt \N^{(4)}_0 (n)+ \N^{(4)}_r (n)+ \N^{(4)}(n).
\end{align}

\noi
We can clearly estimate $\N^{(4)}_0$ and $ \N^{(4)}_r$,
with $|\mu_1| >N$, \eqref{mu2}, and \eqref{mu3}, just as in Lemmata \ref{LEM:N^3_0} and \ref{LEM:N^3_r}.
As for $\N^{(4)}$,  
we can write 
\[\N^{(4)} = \N^{(4)}_1 + \N^{(4)}_2\]

\noi
as the restrictions onto 
\begin{equation} \label{C3}
C_3 = \big\{ |\wt{\mu}_4| \lesssim 9^3  |\wt{\mu}_3|^{1-\frac{1}{100}}\big\} 
\cup \big\{ |\wt{\mu}_4| \lesssim 9^3  |\mu_1|^{1-\frac{1}{100}}\big\}, 
\end{equation}

\noi
where $\wt{\mu}_4 :=  \mu_1 + \mu_2 + \mu_3+\mu_4$,
and its complement $C_3^c$, respectively.
Then, $\N^{(4)}_1$ can be estimated as in Lemma \ref{LEM:N^3_1}
and we can apply (the fourth step of)
Poincar\'e-Dulac normal form reduction
to $\N^{(4)}_2$. 
In this way, we iterate Poincar\'e-Dulac normal form reductions.


\subsection{General step: $J$th generation}

%

After the $J$\,th step, we have
\begin{align} \label{N^J+1}
\N^{(J)}_2 (n)
& = \dt \bigg[\mp 
\sum_{\TT_J \in \mathfrak{T}(J)}\sum_{\substack{{\bf n} \in \mathfrak{N}(\TT_J)\\{\bf n}_r = n}} 
\frac{e^{- i \wt{\mu}_Jt } }{\ft{\mu}_J}
\, \prod_{a \in \TT^\infty_J} v_{n_{a}}
\bigg]\notag \\
& \hphantom{X} 
\pm 
 \sum_{\TT_{J} \in \mathfrak{T}(J)}
\sum_{b \in\TT^\infty_J} 
\sum_{\substack{{\bf n} \in \mathfrak{N}(\TT_J)\\{\bf n}_r = n} }
\frac{e^{- i \wt{\mu}_Jt } }{\ft{\mu}_J}
\, (\RR_1-\RR_2)_{n_b}
\prod_{a \in \TT^\infty_J \setminus \{b\}} v_{n_{a}}  \notag \\
& \hphantom{X} 
\pm 
\sum_{\TT_{J+1} \in \mathfrak{T}(J+1)}\sum_{\substack{{\bf n} \in \mathfrak{N}(\TT_{J+1})\\{\bf n}_r = n}} 
\frac{e^{- i \wt{\mu}_{J+1}t } }{\ft{\mu}_J}
\,\prod_{a \in \TT^\infty_{J+1}} v_{n_{a}} \notag\\
& =: \dt \N^{(J+1)}_0 (n)+ \N^{(J+1)}_r(n) + \N^{(J+1)}(n),
\end{align}

\noi
where
$\wt{\mu}_J$ and $\ft{\mu}_J$
are given by
\begin{align*}
 \wt{\mu}_J := \sum_{j = 1}^J \mu_j,
\quad \text{and} \quad \ft{\mu}_J  := \prod_{j = 1}^J \wt{\mu}_j.
\end{align*}


\medskip

Keep in mind that $|\mu_1|>N$ and 
\begin{equation} \label{muj}
|\wt{\mu}_j|  \gg 
(2j+1)^3\max ( |\wt{\mu}_{j-1}|^{1-\frac{1}{100}},
|\mu_1|^{1-\frac{1}{100}}) >
(2j+1)^3N^{1-\frac{1}{100}}, 
\end{equation}

\noi
for $j = 2, \dots, J.$
First, we estimate $\N^{(J+1)}_0$ and  $\N^{(J+1)}_r$.

\begin{lemma}\label{LEM:N^J+1_0}
Let $\N^{(J+1)}_0$ be as in \eqref{N^J+1}.
Then, 
we have\footnote{The implicit constants are independent of $J$.
The same comment applies to Lemmata \ref{LEM:N^J+1_r} and \ref{LEM:N^J+1_1}.}
\begin{align} 
\| \N^{(J+1)}_0(v)\|_{L^2} & \lesssim N^{-\frac{J}{2}+\frac{(J-1)}{200}+}
 \|v\|_{L^2}^{2J+1}, \label{N^J+1_0-1}\\
\| \N^{(J+1)}_0(v) - \N^{(J+1)}_0(w)\|_{L^2} 
& \lesssim 
N^{-\frac{J}{2}+\frac{(J-1)}{200}+}
 \big(\|v\|_{L^2}^{2J} + \|w\|_{L^2}^{2J}\big) \|v - w\|_{L^2}. \label{N^J+1_0-2}
\end{align}
\end{lemma}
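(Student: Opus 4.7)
I will prove \eqref{N^J+1_0-1}, as \eqref{N^J+1_0-2} follows from
essentially the same argument after telescoping $\prod v_{n_a} - \prod w_{n_a}$ in the usual way.
The plan is a direct generalization of the argument for Lemma \ref{LEM:N^3_0}: apply
Cauchy--Schwarz in the inner sum, control the combinatorial factor by iterated divisor counting,
and sum geometrically in the phase variables using the lower bounds \eqref{muj}.

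First, fix an ordered tree $\TT_J \in \mathfrak{T}(J)$ and $n \in \Z$.
By Cauchy--Schwarz applied to the inner sum over $\mathbf{n} \in \mathfrak{N}(\TT_J)$ with $\mathbf{n}_r = n$,
\begin{align*}
\bigg| \sum_{\substack{\mathbf{n} \in \mathfrak{N}(\TT_J)\\ \mathbf{n}_r = n}}
\frac{1}{|\ft{\mu}_J|} \prod_{a \in \TT^\infty_J} |v_{n_a}| \bigg|^2
\leq \bigg( \sum_{\substack{\mathbf{n} \in \mathfrak{N}(\TT_J)\\ \mathbf{n}_r = n}} \frac{1}{|\ft{\mu}_J|^2} \bigg)
\bigg( \sum_{\substack{\mathbf{n} \in \mathfrak{N}(\TT_J)\\ \mathbf{n}_r = n}} \prod_{a \in \TT^\infty_J} |v_{n_a}|^2 \bigg).
\end{align*}
I will estimate the two factors separately and then sum in $n$.

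For the first (combinatorial) factor, I reorganize the sum by first fixing the phases
$\mu_1, \dots, \mu_J$. Using the divisor bound \eqref{divisor} inductively, at each generation $j$
the parent frequency $n^{(j)}$ is determined by earlier choices, so once $\mu_j$ is fixed
there are at most $o(|\mu_j|^{0+})$ admissible triples $(n_1^{(j)}, n_2^{(j)}, n_3^{(j)})$.
Hence
\begin{equation*}
\sum_{\substack{\mathbf{n} \in \mathfrak{N}(\TT_J)\\ \mathbf{n}_r = n}} \frac{1}{|\ft{\mu}_J|^2}
\lesssim \sum_{\mu_1, \dots, \mu_J} \frac{\prod_{j=1}^J |\mu_j|^{0+}}{\prod_{j=1}^J |\wt{\mu}_j|^2}.
\end{equation*}
Changing variables to $\wt{\mu}_1, \dots, \wt{\mu}_J$ and using
$|\mu_j| = |\wt{\mu}_j - \wt{\mu}_{j-1}| \lesssim \max(|\wt{\mu}_j|, |\wt{\mu}_{j-1}|)$,
the $|\mu_j|^{0+}$ factors are absorbed into $|\wt{\mu}_j|^{0+}|\wt{\mu}_{j-1}|^{0+}$,
leaving
\begin{equation*}
\sum_{|\wt{\mu}_1|>N} \frac{1}{|\wt{\mu}_1|^{2-}} \,
\prod_{j=2}^J \sum_{|\wt{\mu}_j| > (2j+1)^3 N^{1-\frac{1}{100}}} \frac{1}{|\wt{\mu}_j|^{2-}}
\lesssim N^{-1+} \prod_{j=2}^J (2j+1)^{-3+} N^{-(1-\frac{1}{100})(1-)}.
\end{equation*}
This yields a combinatorial factor bounded by
$N^{-J + \frac{J-1}{100}+} \prod_{j=2}^J (2j+1)^{-3+}$.

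For the second factor, Remark \ref{REM:terminal} lets me parameterize $\mathbf{n}$ by the
$2J+1$ terminal frequencies, and summing over $n$ removes the constraint $\mathbf{n}_r = n$:
\begin{equation*}
\sum_n \sum_{\substack{\mathbf{n} \in \mathfrak{N}(\TT_J)\\ \mathbf{n}_r = n}}
\prod_{a \in \TT^\infty_J} |v_{n_a}|^2 \leq \|v\|_{L^2}^{2(2J+1)}.
\end{equation*}
Combining the two factors, taking the square root, and summing over $\TT_J \in \mathfrak{T}(J)$ (of cardinality $c_J$ from \eqref{cj1}) gives
\begin{equation*}
\|\N^{(J+1)}_0(v)\|_{L^2} \lesssim c_J \bigg(\prod_{j=2}^J (2j+1)^{-3+}\bigg)^{\!\frac{1}{2}}
N^{-\frac{J}{2} + \frac{J-1}{200}+} \|v\|_{L^2}^{2J+1}.
\end{equation*}
The main (and only non-routine) step is to check that the prefactor is bounded uniformly in $J$.
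A direct telescoping gives $\prod_{j=2}^J (2j+1) = c_J \cdot (2J+1)/3$, so
$c_J \big(\prod_{j=2}^J (2j+1)^{-3}\big)^{1/2} = 3^{3/2}(2J+1)^{-3/2} c_J^{-1/2}$,
which is summable in $J$ and in particular bounded, with the small loss $\prod (2j+1)^{0+}$ absorbed harmlessly.
This completes the proof.
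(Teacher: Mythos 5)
Your proof is correct and follows essentially the same route as the paper's: Cauchy--Schwarz in the inner sum, iterated divisor counting to bound the combinatorial factor by $\prod_j |\mu_j|^{0+}/|\wt\mu_j|^2$, geometric summation over the $\wt\mu_j$ using the lower bounds \eqref{muj} to extract both the $N$-decay and the $\prod_{j\ge 2}(2j+1)^{-3+}$ factor, and the check that this beats $c_J = |\mathfrak{T}(J)|$. You have merely unpacked the paper's compressed display into two separately estimated factors and spelled out the telescoping computation $\prod_{j=2}^J(2j+1) = c_J(2J+1)/3$ that the paper leaves implicit.
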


\begin{proof}
We only prove \eqref{N^J+1_0-1} since \eqref{N^J+1_0-2} follows in a similar manner.
Note that there is an extra factor $\sim J$ 
when we estimate the difference in \eqref{N^J+1_0-2}
since
$|a^{2J+1} - b^{2J+1}| \lesssim     
\big(\sum_{j = 1}^{2J+1} a^{2J+1-j}b^{j-1} \big) |a - b |$
has $O(J)$ many terms.
However, this does not cause a problem since the constant we obtain decays 
like a fractional power of a factorial in $J$ (as we see below in \eqref{N^J+1_0-3}.)
The same comment applies to Lemmata \ref{LEM:N^J+1_r}
and \ref{LEM:N^J+1_1}.

As in the proof of Lemma \ref{LEM:N^3_0},
for fixed  $n^{(j)}$ and $\mu_j$,
there are at most $o(|\mu_j|^{0+})$ many choices for $n^{(j)}_1$, $n^{(j)}_2$, and $n^{(j)}_3$.
Also, note that $\mu_j$ is determined by $\wt{\mu}_1, \dots, \wt{\mu}_j$
and 
\begin{equation} \label{mujj}
|\mu_j| \lesssim \max(|\wt{\mu}_{j-1}|, |\wt{\mu}_j|).
\end{equation}

\noi
since $\mu_j = \wt{\mu}_{j}-\wt{\mu}_{j-1}$.
Then, by Cauchy-Schwarz inequality, we have
\begin{align}
\| \N^{(J+1)}_0(v)\|_{L^2}
& \lesssim  
\sum_{\TT_J \in \mathfrak{T}(J)}
\bigg\{ \sum_n 
\bigg(\sum_{\substack{|\mu_1|>N \\ |\wt{\mu}_j|>(2j+1)^3N^{1-\frac{1}{100}}\\ j = 2, \dots, J} }
\prod_{k = 1}^J \frac{1}{|\wt{\mu}_k|^2}
\, |\mu_k|^{0+}\bigg) \notag \\
& \hphantom{XXXXXX} \times 
\bigg(\sum_{\substack{{\bf n} \in \mathfrak{N}(\TT_J)\\{\bf n}_r = n}} 
\prod_{a \in \TT^\infty_J} |v_{n_a}|^2
\bigg)\bigg\}^\frac{1}{2}  \notag \\
& \lesssim \frac{c_J}{\prod_{j = 2}^J(2j+1)^{\frac{3}{2}-}}N^{-\frac{J}{2}+\frac{(J-1)}{200}+} \|v\|_{L^2}^{2J+1}
\label{N^J+1_0-3} \\
&\lesssim N^{-\frac{J}{2}+\frac{(J-1)}{200}+} \|v\|_{L^2}^{2J+1}, \notag
\end{align}

\noi
where $c_J = |\mathfrak{T}(J)| $ is defined in \eqref{cj1}.
\end{proof}

\medskip

\begin{lemma}\label{LEM:N^J+1_r}
Let $\N^{(J+1)}_r$ be as in \eqref{N^J+1}.
Then, 
we have
\begin{align} 
\| \N^{(J+1)}_r(v)\|_{L^2} & \lesssim N^{-\frac{J}{2}+\frac{(J-1)}{200}+} 
\|v\|_{L^2}^{2J+3}, \label{N^J+1_r-1}\\
\| \N^{(J+1)}_r(v) - \N^{(J+1)}_r(w)\|_{L^2} 
& \lesssim 
N^{-\frac{J}{2}+\frac{(J-1)}{200}+} 
\big(\|v\|_{L^2}^{2J+2} + \|w\|_{L^2}^{2J+2}\big) \|v - w\|_{L^2}. \label{N^J+1_r-2}
\end{align}
\end{lemma}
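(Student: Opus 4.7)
The plan is to reduce Lemma \ref{LEM:N^J+1_r} to Lemmata \ref{LEM:N^J+1_0} and \ref{LEM:R1}, paralleling the reduction of Lemma \ref{LEM:N^3_r} to Lemmata \ref{LEM:N^3_0} and \ref{LEM:R1}. Observe that $\N^{(J+1)}_r$ carries exactly the same phase $e^{-i\wt{\mu}_J t}$, the same multiplier $1/\ft{\mu}_J$, and the same outer index structure ($\TT_J \in \mathfrak{T}(J)$, $\mathbf{n} \in \mathfrak{N}(\TT_J)$) as $\N^{(J+1)}_0$. The only two differences are: (i) an additional sum over terminal nodes $b \in \TT_J^\infty$, which contributes exactly $|\TT_J^\infty| = 2J+1$ terms, and (ii) the factor $v_{n_b}$ is replaced by $(\RR_1 - \RR_2)(v)_{n_b}$, which by Lemma \ref{LEM:R1} is cubic in $v$ with $\|(\RR_1-\RR_2)(v)\|_{L^2} \lesssim \|v\|_{L^2}^3$.

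First I would fix $b \in \TT_J^\infty$ and run the divisor-counting Cauchy--Schwarz argument from the proof of Lemma \ref{LEM:N^J+1_0} essentially verbatim. The frequency restrictions \eqref{muj} and the divisor bound $o(|\mu_j|^{0+})$ on the number of choices for $(n_1^{(j)}, n_2^{(j)}, n_3^{(j)})$ are untouched by the substitution at node $b$, so the multiplier sum again yields $N^{-J/2 + (J-1)/200+}$ together with the factorial prefactor $c_J/\prod_{j=2}^J (2j+1)^{3/2-}$ isolated in \eqref{N^J+1_0-3}. On the spatial side, I would separate the node $b$ before the final application of Cauchy--Schwarz and invoke \eqref{R1_1}, which replaces one factor of $\|v\|_{L^2}$ by $\|(\RR_1 - \RR_2)(v)\|_{L^2} \lesssim \|v\|_{L^2}^3$, producing the overall power $\|v\|_{L^2}^{2J+3}$ claimed in \eqref{N^J+1_r-1}. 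Summing over the $2J+1$ choices of $b$ contributes a linear-in-$J$ loss that is absorbed by the factorial decay in $\prod_{j=2}^J (2j+1)^{3/2-}$.

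For the Lipschitz estimate \eqref{N^J+1_r-2}, I would write $\N^{(J+1)}_r(v) - \N^{(J+1)}_r(w)$ as a telescoping sum across the $2J$ remaining terminal factors $v_{n_a}$ with $a \neq b$, together with the single difference $(\RR_1 - \RR_2)(v)_{n_b} - (\RR_1 - \RR_2)(w)_{n_b}$. On the $\RR_j$ difference I would apply \eqref{R1_2}; each remaining factor is bounded by $\|v\|_{L^2}$ or $\|w\|_{L^2}$, exactly as in the Lipschitz version of Lemma \ref{LEM:N^J+1_0}. The telescoping produces an additional $O(J)$ prefactor, which is again swallowed by the factorial decay.

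The only obstacle is essentially bookkeeping: one must verify that the polynomial-in-$J$ losses (from $|\TT_J^\infty| = 2J+1$ and from telescoping) do not corrupt the $J$-uniform implicit constant. This is automatic from the factorial prefactor $c_J/\prod_{j=2}^J (2j+1)^{3/2-}$ already made explicit in \eqref{N^J+1_0-3}, so the argument reduces to a routine copy of the proof of Lemma \ref{LEM:N^J+1_0} with one cubic factor $\RR_1 - \RR_2$ inserted at the distinguished node $b$.
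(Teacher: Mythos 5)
Your proposal is correct and takes essentially the same approach as the paper: reduce to Lemma \ref{LEM:N^J+1_0} and Lemma \ref{LEM:R1}, inserting the cubic $\RR_1 - \RR_2$ factor at the distinguished node $b$, and absorb the $|\TT_J^\infty| = 2J+1$ and telescoping losses via the factorial decay exhibited in \eqref{N^J+1_0-3}. The paper states this in two lines; your version simply fills in the bookkeeping details.
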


\begin{proof}
This lemma  follows from Lemmata \ref{LEM:N^J+1_0} and \ref{LEM:R1}.
Note that, given $\TT_J \in \mathfrak{T}(J)$, we have $\#\{b: b \in \TT^\infty_J\} = 2J+1$.
This extra factor $2J+1$ does not cause a problem thanks to the fast decaying constant 
in \eqref{N^J+1_0-3}.
\end{proof}

Finally, we treat $\N^{(J+1)}$.
As before, we write
\begin{equation} \label{N^J+1_1}
\N^{(J+1)} = \N^{(J+1)}_1 + \N^{(J+1)}_2,
\end{equation}

\noi
where $\N^{(J+1)}_1$ is the restriction of $\N^{(J+1)}$
onto 
\begin{equation} \label{CJ}
C_J = \big\{ |\wt{\mu}_{J+1}| \lesssim (2J+3)^3|\wt{\mu}_J|^{1-\frac{1}{100}}\big\} 
\cup \big\{ |\wt{\mu}_{J+1}| \lesssim (2J+3)^3|\mu_1|^{1-\frac{1}{100}}\big\} 
\end{equation}

\noi
and
$\N^{(J+1)}_2 := \N^{(J+1)} - \N^{(J+1)}_1$.
We estimate the first term $\N^{(J+1)}_1$
in the following lemma,
while we apply Poincar\'e-Dulac normal form reduction 
once again to the second term $\N^{(J+1)}_2$ 
as in \eqref{N^J+1}.

\begin{lemma}\label{LEM:N^J+1_1}
Let $\N^{(J+1)}_1$ be as in \eqref{N^J+1_1}.
Then, 
we have
\begin{align} \label{N^J+1-1}
\| \N^{(J+1)}_1(v)\|_{L^2} & \lesssim N^{-\frac{J-1}{2}+\frac{(J-2)}{200}+}\|v\|_{L^2}^{2J+3},\\
\| \N^{(J+1)}_1(v) - \N^{(J+1)}_1(w)\|_{L^2} 
& \lesssim 
N^{-\frac{J-1}{2}+\frac{(J-2)}{200}+} \big(\|v\|_{L^2}^{2J+2} + \|w\|_{L^2}^{2J+2}\big) \|v - w\|_{L^2}. \label{N^J+1-2}
\end{align}
\end{lemma}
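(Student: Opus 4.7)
The plan is to follow the template of the proof of Lemma \ref{LEM:N^3_1} essentially verbatim, now combining the divisor-counting bound over all $J+1$ generations with the extra gain supplied by the cutoff $C_J$. I would only write out \eqref{N^J+1-1}, since \eqref{N^J+1-2} follows by distributing $v-w$ across the $2J+3$ factors in the product and using the fact that the resulting extra factor of $O(J)$ is absorbed by the factorially-decaying constant (cf.\ the opening remarks in the proof of Lemma \ref{LEM:N^J+1_0}).

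The first ingredient is the divisor bound \eqref{divisor}: for each generation $j = 1, \dots, J+1$, once $n^{(j)}$ and $\mu_j = 2(n^{(j)} - n^{(j)}_1)(n^{(j)} - n^{(j)}_3)$ are fixed, there are at most $o(|\mu_j|^{0+})$ admissible choices of $\big(n^{(j)}_1, n^{(j)}_2, n^{(j)}_3\big)$. Using \eqref{mujj} repeatedly to replace each $|\mu_j|^{0+}$ by $\max(|\wt{\mu}_{j-1}|, |\wt{\mu}_j|)^{0+}$, all these divisor factors will be absorbed into an innocuous $\prod_k |\wt{\mu}_k|^{0+}$.

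The second ingredient is the cutoff $C_J$ from \eqref{CJ}, which provides the key gain at the $(J+1)$-th generation. In the regime $|\wt{\mu}_{J+1}| \lesssim (2J+3)^3 |\wt{\mu}_J|^{1-\frac{1}{100}}$, for fixed $\wt{\mu}_1, \dots, \wt{\mu}_J$ there are at most $O(|\wt{\mu}_J|^{1-\frac{1}{100}})$ choices for $\wt{\mu}_{J+1}$, hence for $\mu_{J+1} = \wt{\mu}_{J+1} - \wt{\mu}_J$. In the complementary regime $|\wt{\mu}_{J+1}| \lesssim (2J+3)^3 |\mu_1|^{1-\frac{1}{100}}$ the bound is replaced by $O(|\mu_1|^{1-\frac{1}{100}})$, which is smaller and therefore harmless. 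Then Cauchy--Schwarz, exactly as in \eqref{N^J+1_0-3}, gives
\begin{align*}
\|\N^{(J+1)}_1(v)\|_{L^2}^2
& \lesssim c_{J+1}^2 \!\!
\sum_{\substack{|\mu_1|>N \\ |\wt{\mu}_j|>(2j+1)^3 N^{1-\frac{1}{100}}\\ j=2,\dots,J}}
\Bigg( \prod_{k=1}^{J-1}\frac{1}{|\wt{\mu}_k|^{2-}}\Bigg)\frac{1}{|\wt{\mu}_J|^{1+\frac{1}{100}-}} \,\|v\|_{L^2}^{4J+6}.
\end{align*}
The $\wt{\mu}_1$-sum contributes $N^{-1+}$, each $\wt{\mu}_j$-sum for $j=2,\dots,J-1$ contributes $N^{-(1-\frac{1}{100})+}$, and the modified $\wt{\mu}_J$-sum contributes $N^{-\frac{1}{100}(1-\frac{1}{100})+}$. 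Their product is $N^{-(J-1)+\frac{J-2}{100}+}$, and taking square roots yields the claimed bound $N^{-\frac{J-1}{2}+\frac{(J-2)}{200}+}$.

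The main obstacle, as in the earlier lemmas, is purely combinatorial bookkeeping: one must verify that the cardinality $c_{J+1} = 1\cdot 3 \cdots (2J+1)$ of $\mathfrak{T}(J+1)$ and the constants $(2J+3)^3$ from the definition of $C_J$ do not spoil the estimate. This is arranged exactly as in \eqref{N^J+1_0-3}: the thresholds $|\wt{\mu}_j| > (2j+1)^3 N^{1-\frac{1}{100}}$ produce a decaying factor $\prod_{j=2}^{J+1}(2j+1)^{-\frac{3}{2}+}$, which beats $c_{J+1}$ by a large margin and in fact gives a super-factorial decay that will be crucial in Section \ref{SEC:4} when summing the contributions over all generations $J$.
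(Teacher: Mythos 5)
Your proposal is correct and follows essentially the same route as the paper: divisor counting over all $J+1$ generations, the cutoff $C_J$ supplying the extra $O(|\wt{\mu}_J|^{1-\frac{1}{100}})$ count for $\mu_{J+1}$, and Cauchy--Schwarz as in the proof of Lemma \ref{LEM:N^J+1_0}, with the factorial decay from the thresholds $(2j+1)^3 N^{1-\frac{1}{100}}$ absorbing $c_{J+1}$ and the $(2J+3)^3$ factors. One small imprecision: in the complementary regime $|\wt{\mu}_{J+1}| \lesssim (2J+3)^3 |\mu_1|^{1-\frac{1}{100}}$ the replacement count $O(|\mu_1|^{1-\frac{1}{100}})$ is not actually ``smaller'' than $O(|\wt{\mu}_J|^{1-\frac{1}{100}})$ in any pointwise sense --- the two quantities are not comparable --- but, as the paper computes, placing the gain against $|\mu_1|^{-2}$ instead of $|\wt{\mu}_J|^{-2}$ produces the same final power of $N$, so the conclusion holds.
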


\begin{proof}
We only prove \eqref{N^J+1-1} since \eqref{N^J+1-2} follows in a similar manner.
As before, we use the divisor counting argument.
For fixed $n^{(j)}$ and $\mu_j$,
there are at most $o(|\mu_j|^{0+})$ many choices for $n^{(j)}_1$, $n^{(j)}_2$, and $n^{(j)}_3$.
Also, note that $\mu_j$ is determined by $\wt{\mu}_1, \dots, \wt{\mu}_j$

First, we assume that 
$ |\wt{\mu}_{J+1}| = |\wt{\mu}_J+\mu_{J+1}| \lesssim (2J+3)^3|\wt{\mu}_J|^{1-\frac{1}{100}}$
holds in \eqref{CJ}.
Then, we have
$|\mu_{J+1}| \lesssim |\wt{\mu}_J|$.
Also, for fixed $\wt{\mu}_J$,
there are at most $o(|\wt{\mu}_J|^{1-\frac{1}{100}})$ many choices\footnote{Strictly speaking,
there are at most $o((2J+3)^3|\wt{\mu}_J|^{1-\frac{1}{100}})$ choices.
However, we drop $(2J+3)^3$ in view of fast decay of coefficients in $J$.
See \eqref{N^J+1_0-3}.
The same comment applies in the following.}
for $\wt{\mu}_{J+1}$
and hence for $\mu_{J+1} = \wt{\mu}_{J+1} - \wt{\mu}_J$.
Then, by Cauchy-Schwarz inequality with \eqref{muj} and \eqref{mujj}, we have
\begin{align}
\| \N^{(J+1)}_1(v)\|_{L^2}
&  \lesssim \sum_{\TT_{J+1} \in \mathfrak{T}(J+1)}
\bigg\{ \sum_n 
\bigg(\sum_{\substack{|\mu_1|>N \\ |\wt{\mu}_j|>(2j+1)^3N^{1-\frac{1}{100}}\\ j = 2, \dots, J} }
|\wt{\mu}_J|^{1-\frac{1}{100}+}
\prod_{k = 1}^J \frac{1}{|\wt{\mu}_k|^2}
\, |\mu_k|^{0+} \bigg)  \notag\\
& \hphantom{XXXXXX}
\times \bigg(\sum_{\substack{{\bf n} \in \mathfrak{N}(\TT_{J+1})\\{\bf n}_r = n}} 
\prod_{a \in \TT^\infty_{J+1}} |v_{n_{a}}|^2 
\bigg)\bigg\}^\frac{1}{2} \notag\\
& \lesssim N^{-\frac{J-1}{2}+\frac{J-2}{200}-\frac{1}{200}+} \|v\|_{L^2}^{2J+3}
 \leq N^{-\frac{J-1}{2}+\frac{J-2}{200}+} \|v\|_{L^2}^{2J+3}.  \label{N^J+1-3}
\end{align}

\noi
by crudely estimating in $N$.

If $|\wt{\mu}_{J+1}| \lesssim (2J+3)^3|\mu_1|^{1-\frac{1}{100}}$ holds in \eqref{CJ},
then, for fixed $\mu_j$, $j = 1, \dots, J$, 
there are at most $O(|\mu_1|^{1-\frac{1}{100}})$ many 
choices for $\mu_{J+1}$.
By repeating the same computation, we obtain 
\[|\mu_1|^{1-\frac{1}{100}}
\prod_{k = 1}^J \frac{1}{|\wt{\mu}_k|^2}
\, |\mu_k|^{0+} 
\]
in \eqref{N^J+1-3}, yielding \eqref{N^J+1-1} with 
$N^{-\frac{J-1}{2}+\frac{J-2}{200}+}$.
\end{proof}

\section{Existence of weak solutions}
\label{SEC:4}

In this section, we put together all the lemmata in the previous sections
and prove Theorems \ref{thm0} in $L^2(\T)$, i.e. for $s = 0$.
The argument for $s > 0$ follows in a similar manner and we omit the details.
By performing an infinite iteration of Poincar\'e-Dulac normal form reductions
described in Sections \ref{SEC:2} and \ref{SEC:3}, we have the following.

First consider a smooth solution $v$  of \eqref{NLS4}
with smooth initial condition $v_0$.
Then, it satisfies the Duhamel formulation:
\begin{align} \label{Duhamel2}
v(t) & = v_0 + i \int_0^t \N_1(v)(t') - \RR_1(v)(t') + \RR_2(t') dt'\notag \\
& = v_0 +i \int_0^t S(-t') \big[S(t')v(t')|S(t')v(t')|^2\big] dt'
\end{align}

\noi
as a smooth function for each $t$.
Then it {\it formally} satisfies\footnote{Once again, we are replacing $\pm 1$
and  $\pm i$
by 1 for simplicity since they play no role in our analysis.} 
\begin{align} \label{41}
\dt v =  \dt \sum_{j = 2}^\infty \N^{(j)}_0(v)
+ \RR_1 +\RR_2
+ \sum_{j = 2}^\infty \N^{(j)}_r(v) + \sum_{j = 1}^\infty \N^{(j)}_1(v).
\end{align}

\noi 
or 
\begin{align} \label{42}
 v(t)  = \G_{v_0}v(t) : = v_0 & +   \sum_{j = 2}^\infty  \N^{(j)}_0(v)(t) 
 - \sum_{j = 2}^\infty \N^{(j)}_0(v_0) \notag \\
& + 
\int_0^t \RR_1 (t')+ \RR_2(t')+
\sum_{j = 2}^\infty \N^{(j)}_r(v)(t') + \sum_{j = 1}^\infty \N^{(j)}_1(v)(t')dt',
\end{align}

\noi
where
$\N_1^{(1)} = \N_{11}$ in \eqref{N1},
$\N^{(2)}_0 = \N_{21}$ in \eqref{N12},
$\N^{(2)} = \N_3$ in \eqref{N^2}, and 
$\N^{(2)}_r = \N_4$ in \eqref{N221}.
At this point, the right hand sides of \eqref{41} and \eqref{42}
are merely formal expressions.
In the following, we show that the series appearing on the right hand side
of \eqref{42} converge absolutely in $C([0, T];L^2)$
for sufficiently small $T>0$
if $v \in C([0, T];L^2)$.

First, define the partial sum operator $\G_{v_0}^{(J)}$ by 
\begin{align}
\G_{v_0}^{(J)}  v(t)  = v_0 +   \sum_{j = 2}^J & \N^{(j)}_0(v)(t) 
 - \sum_{j = 2}^J \N^{(j)}_0(v_0) \notag \\
& + 
\int_0^t \RR_1 (t')+\RR_2 (t')+ 
\sum_{j = 2}^J \N^{(j)}_r(v)(t') + \sum_{j = 1}^J \N^{(j)}_1(v)(t')dt'.
\label{43}
\end{align}

\noi
In the following, we let $C_TL^2 = C([0, T];L^2)$.
By Lemmata \ref{LEM:R1}, \ref{LEM:N11}, 
\ref{LEM:N^J+1_0}, \ref{LEM:N^J+1_r}, and \ref{LEM:N^J+1_1}, we have
\begin{align}
\|\G_{v_0}^{(J)}  v\|_{C_TL^2}  
\leq & \ \|v_0\|_{L^2} +   
C \sum_{j = 2}^J N^{-\frac{j-1}{2} +\frac{j-2}{200}+} 
\big(\|v\|_{C_TL^2}^{2j-1} + \|v_0\|_{L^2}^{2j-1}\big)\notag \\
  & \hphantom{XXX}+ 
CT \Big\{\|v\|_{C_TL^2}^3
+ \sum_{j = 2}^J 
N^{-\frac{j-1}{2} +\frac{j-2}{200}+} \|v\|_{C_TL^2}^{2j+1} \notag\\
 & \hphantom{XXXXXX}+ 
 N^{\frac{1}{2}+}\|v\|_{C_TL^2}^3
 +\sum_{j = 2}^J 
 N^{-\frac{j-2}{2} +\frac{j-3}{200}+}
 \|v\|_{C_TL^2}^{2j+1}\Big\}.\label{44}
\end{align}

\noi
{\it Suppose} that $\|v_0\|_{L^2} \leq R$ and $ \|v\|_{C_TL^2} \leq \wt{R}$
with $\wt{R} \geq R \geq 1$.
Then, we have
\begin{align}
\|\G_{v_0}^{(J)}  v\|_{C_TL^2}  
\leq & \ R +   
C N^{-\frac{1}{2}+}R^3 \sum_{j = 0}^{J-2} 
(N^{-\frac{1}{2}+\frac{1}{200}+} R^2)^j 
+ C N^{-\frac{1}{2}+}\wt{R}^3 \sum_{j = 0}^{J-2} 
(N^{-\frac{1}{2}+\frac{1}{200}+} \wt{R}^2)^j
\notag\\
 & \hphantom{XXX} + 
CT \Big\{(1+N^{\frac{1}{2}+})\wt{R}^3
+ N^{-\frac{1}{2}+}\wt{R}^5
\sum_{j = 0}^{J-2} 
(N^{-\frac{1}{2}+\frac{1}{200}+} \wt{R}^2)^j \notag \\
 & \hphantom{XXXXXX}
 +N^{-\frac{1}{200+}} \wt{R}^5 \sum_{j = 0}^{J-2} 
(N^{-\frac{1}{2}+\frac{1}{200}+} \wt{R}^2)^j
\Big\}.\label{45}
\end{align}

\noi
Now, choose $N = N(\wt{R})$ large such that $N^{-\frac{1}{2}+\frac{1}{200}+} \wt{R}^2 \leq \frac{1}{2}$.
For example, we can simply choose 
\begin{equation}
N^{-\frac{1}{3}} \wt{R}^2 \leq \tfrac{1}{2}
\quad \Longleftrightarrow \quad
N \geq (2\wt{R}^2)^3.
\label{46}
\end{equation}

\noi
Then, the geometric series in \eqref{45} converge
(even for $J = \infty$)
and are bounded by 2.
Thus, we have
\begin{align}
\|\G_{v_0}^{(J)}  v\|_{C_TL^2}  
\leq   R \ +  \ & 
2C N^{-\frac{1}{2}+}R^3
+ 2C N^{-\frac{1}{2}+}\wt{R}^3 \notag \\
&  + 
CT \Big\{(1+N^{\frac{1}{2}+})\wt{R}^2
+ 2 N^{-\frac{1}{2}+}\wt{R}^4
 +2N^{-\frac{1}{200+}} \wt{R}^4
\Big\}\wt{R}.\label{47}
\end{align}

\noi
Next, choose $T >0$ sufficiently small such that 
\begin{equation}
\label{48}
CT \Big\{(1+N^{\frac{1}{2}+})\wt{R}^2
+ 2 N^{-\frac{1}{2}+}\wt{R}^4
 +2N^{-\frac{1}{200+}} \wt{R}^4
\Big\} < \tfrac{1}{10}.
\end{equation}

\noi
From \eqref{46}, we have
$2C N^{-\frac{1}{2}+}\wt{R}^3
\leq C N^{-\frac{1}{6}+}\wt{R}$.
Finally, by further imposing $N$ sufficiently large such that
\begin{equation} \label{49}
C N^{-\frac{1}{6}+} <\tfrac{1}{10},
\end{equation}

\noi
we obtain
\begin{align}
\|\G_{v_0}^{(J)}  v(t)\|_{C_TL^2}  
\leq    R+ \tfrac{1}{10} R \ +  
\tfrac{1}{5} \wt{R}
= \tfrac{11}{10} R \ +  
\tfrac{1}{5} \wt{R}.\label{410}
\end{align}

\noi
We point out that this estimate also holds for $J = \infty$,
and hence $\G_{v_0} = \G_{v_0}^{(\infty)}$ (= right hand side of \eqref{42})
is well-defined.

\medskip

Next, given an initial condition $v_0 \in L^2(\T)$, we construct a solution $v \in C([0, T];L^2)$
in the sense of Definition \ref{DEF:3}.
First, take a sequence $\{v_0^{[m]}\}_{m\in \mathbb{N}}$ 
of smooth functions such that $v_0^{[m]} \to v_0$ in $L^2(\T)$.
(Simply take $v_0^{[m]} := \mathbb{P}_{\leq m} v_0$,
where $\mathbb{P}_{\leq m}$ is the Dirichlet projection onto the frequencies $|n|\leq m$.)
Let $R = \|v_0\|_{L^2} + 1$.
Without loss of generality, assume that $\|v_0^{[m]}\|_{L^2} \leq R$.

In the following, we establish an a priori estimate on smooth solutions
without the $L^2$-conservation
so that the argument can be easily modified for $v_0 \in H^s$, $s>0$.
Let $v^{[m]}$ denote the smooth global-in-time solution of cubic NLS \eqref{NLS4}
with initial condition $v_0^{[m]}$.
First, we use the continuity argument to show that 
$v^{[m]}$ satisfies \eqref{42} on $[0, T]$ with $T = T(R)>0$, 
independent of $m \in \mathbb{N}$.
(As we see later, it suffices to take $T = T(R)>0$
satisfying \eqref{48}.)
Fix $m \in \mathbb{N}$.
Note that $\|v^{[m]}\|_{C_tL^2} = \|v^{[m]}\|_{C([0, t];L^2)}$ is continuous in $t$.
Since $ \|v_0^{[m]}\|_{L^2} \leq R$, 
there exists a time interval $[0, T_1]$ with $T_1 > 0$ such that 
$ \|v^{[m]}\|_{C_{T_1}L^2} \leq 4R$. 
Then, by repeating the previous computation with 
$\wt{R} = 4R $
(and keeping one of the factors as $  \|v\|_{C_{T_1}L^2}$),
we obtain
\begin{align}
 \|v^{[m]}\|_{C_{T_1}L^2} =  \|\G_{v_0^{[m]}}  v^{[m]}\|_{C_{T_1} L^2}  
\leq   \tfrac{11}{10} R \ +  \ & 
\tfrac{1}{5}  \|v^{[m]}\|_{C_{T_1}L^2}\label{411}
\end{align}

\noi
as long as $N$ and $T_1$ satisfy \eqref{46}, \eqref{48}, and \eqref{49}.
This implies that
\begin{equation} \label{412}
 \|v^{[m]}\|_{C_{T_1}L^2} \leq \tfrac{19}{10}R < 2R.
\end{equation}

\noi
Hence, it follows from the continuity in $t$ of $ \|v^{[m]}\|_{C_{t}L^2}$
that there exists $\eps > 0$ such that $ \|v^{[m]}\|_{C_{T_1+\eps}L^2} \leq 4R$.
Then, from \eqref{411} and \eqref{412} with $T_1+\eps$ in place of $T_1$,
we conclude that  
$ \|v^{[m]}\|_{C_{T_1+\eps}L^2} \leq 2R$
as long as $N$ and $T_1+\eps$ satisfy \eqref{46}, \eqref{48}, and \eqref{49}.
Note that these conditions are independent of $m \in \mathbb{N}$.
In this way, we obtain a time interval $[0, T]$
such that $ \|v^{[m]}\|_{C_{T}L^2} \leq 2R$ for all $m\in\mathbb{N}$.

Moreover, by repeating a similar computation on the difference, 
we have
\begin{align} \label{M3}
 \| \G_{v_0^{[m_1]}} v^{[m_1]} - & \G_{v_0^{[m_2]}} v^{[m_2]}\|_{C_T L^2} \notag\\
& \leq 
(1+ \tfrac{1}{10})\|v_0^{[m_1]}-v_0^{[m_2]}\|_{L^2}
+ \tfrac{1}{5}
\|v^{[m_1]}- v^{[m_2]}\|_{C_TL^2}
\end{align}

\noi
by possibly taking larger $N$ and smaller $T$.
Since $v^{[m_j]}$ is a (smooth) solution with initial condition $v_0^{[m_j]}$,
namely $ v^{[m_j]}= \G_{v_0^{[m_j]}} v^{[m_j]}$, 
it follows from \eqref{M3} that
\begin{align} \label{M4}
\|v^{[m_1]}- v^{[m_2]}\|_{C_T L^2}
\leq C'\|v_0^{[m_1]}-v_0^{[m_2]}\|_{L^2}
\end{align}

\noi
for some $C' >0$.
Hence, $\{v^{[m]}\}$ converges in $C([0, T]; L^2)$.

\medskip

Let $v^\infty$ denote the limit.
Next, we  show that $u^\infty := S(t) v^\infty$ satisfies
NLS \eqref{NLS1} on $[0, T]$ in the sense of Definition \ref{DEF:3}.
In the following, we drop $\infty$ in the superscript and simply denote $v^\infty$ and $u^\infty$ by $v$ and $u$.
Also, let $u^{[m]}(t):= S(t) v^{[m]}(t)$, where 
$v^{[m]}$ is the smooth  solution to \eqref{NLS4} with smooth initial condition $v_0^{[m]}$ as above.
Note that $u^{[m]}$ is the smooth  solution to \eqref{NLS1} with smooth initial condition 
$u_0^{[m]} := v_0^{[m]}$.
Moreover, $u^{[m]}$ converges to $u$ in $C([0, T];L^2)$,
since $v^{[m]}$ converges to $v$ in $C([0, T];L^2)$.
Thus, $\dt u^{[m]}$ and $\dx^2 u^{[m]}$
converge to $\dt u$ and $\dx^2 u$ in $\mathcal{D}'(\T\times (0, T))$, respectively.
Since $u^{[m]}$ satisfies \eqref{NLS1} for each $m$, 
we see that 
\[\mathcal{N}(u^{[m]}) : = u^{[m]}|u^{[m]}|^2
= - i \dt u^{[m]} + \dx^2 u^{[m]}\]

\noi
also converges to some distribution $w$ in $\mathcal{D}'(\T\times (0, T))$.

\begin{proposition} \label{PROP:NON2}
Let $w$ be the limit of $\mathcal{N}(u^{[m]})$ in the distributional sense as above.
Then, $w = \mathcal{N}(u)$, where $\mathcal{N}(u)$ on the right hand side is to be interpreted
in the sense of Definition \ref{DEF:2}.
\end{proposition}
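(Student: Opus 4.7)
The plan is to identify $w$ in two independent ways and check that the two identifications match. First, by passing to the limit $m \to \infty$ in the normal-form representation of $\mathcal{N}(u^{[m]})$ already implicit in Sections~\ref{SEC:2}--\ref{SEC:3}; second, by running the same normal-form algorithm on $\mathcal{N}(T_N u^{[m]})$, letting $m \to \infty$ to obtain a representation of $\mathcal{N}(T_N u)$, and finally letting $N \to \infty$.

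For the first identification, combining $\partial_t v^{[m]} = -i S(-t)\mathcal{N}(u^{[m]})$ with the iterated differentiation-by-parts identities from \eqref{N12} through \eqref{N^J+1} yields, for each smooth solution $v^{[m]}$, the pointwise identity
\begin{equation*}
S(-t)\mathcal{N}(u^{[m]}) = i\partial_t \sum_{j\geq 2} \N^{(j)}_0(v^{[m]}) + i\sum_{j\geq 1} \N^{(j)}_1(v^{[m]}) + i\sum_{j\geq 2} \N^{(j)}_r(v^{[m]}) - i\RR_1(v^{[m]}) + i\RR_2(v^{[m]}),
\end{equation*}
valid as a distribution in time. Each series converges absolutely in $C_TL^2$ by Lemmas~\ref{LEM:R1}, \ref{LEM:N^J+1_0}, \ref{LEM:N^J+1_r}, \ref{LEM:N^J+1_1}. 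Sending $m \to \infty$ and invoking the $C_TL^2$-continuity of each multilinear summand in $v$, together with $\mathcal{N}(u^{[m]}) \to w$ in $\mathcal{D}'$, produces the identical identity with $v^{[m]}$ replaced by $v$ and $\mathcal{N}(u^{[m]})$ replaced by $w$.

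For the second identification, fix a Fourier cutoff $T_N$ and observe that $T_N u^{[m]}$ is smooth because $u^{[m]}$ is. Computing $\mathcal{N}(T_N u^{[m]}) = T_N u^{[m]}|T_N u^{[m]}|^2$ in Fourier variables, splitting into resonant and non-resonant parts, and carrying out the same infinite differentiation-by-parts iteration---crucially substituting $\partial_t v^{[m]}$ at each stage by means of \eqref{NLS4} for $v^{[m]}$, never for $T_N v^{[m]}$---produces the analog of the preceding identity in which every multilinear summand is dressed by the single extra multiplier $m_N(n^{(1)}_1)\overline{m_N(n^{(1)}_2)}m_N(n^{(1)}_3)$ attached to the three children of the root node. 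No cutoff appears on the inner frequencies generated by the iteration, precisely because the substitution uses the equation for $v^{[m]}$ itself. Writing $\N^{(j),N}_\ast$ and $\RR^N_j$ for the resulting dressed operators and letting $m \to \infty$ with $N$ fixed, compactness of the Fourier support of $T_N u^{[m]}$ gives $\mathcal{N}(T_N u^{[m]}) \to \mathcal{N}(T_N u)$ uniformly on $\T \times [0,T]$, while the dressed multilinear sums inherit $C_TL^2$-continuity in $v$; the identity thus persists with $v^{[m]}$ replaced by $v$ and with $\mathcal{N}(T_N u)$ on the left-hand side.

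It remains to send $N \to \infty$. For each fixed $j$ and each $\ast \in \{0, r, 1\}$, dominated convergence applied to the Cauchy--Schwarz estimates of the relevant lemma yields $\|\N^{(j),N}_\ast(v) - \N^{(j)}_\ast(v)\|_{L^2} \to 0$, since $|m_N| \leq C$ uniformly and $m_N(n^{(1)}_1)\overline{m_N(n^{(1)}_2)}m_N(n^{(1)}_3) \to 1$ pointwise as $N \to \infty$; the same reasoning gives $\|\RR^N_j(v) - \RR_j(v)\|_{L^2} \to 0$. Because the geometric-in-$j$ decay of the bounds in Lemmas~\ref{LEM:N^J+1_0}--\ref{LEM:N^J+1_1} is uniform in the Fourier cutoff, $\lim_N$ commutes with $\sum_j$ and with the distributional $\partial_t$ on the $\N^{(j)}_0$-series. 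Comparing with the representation from the first step yields $\mathcal{N}(T_N u) \to w$ in $\mathcal{D}'(\T \times (0,T))$, which is the claim. The principal difficulty is the combinatorial bookkeeping in the second step, namely verifying that through the infinite iteration the sole residual trace of the Fourier cutoff is the single outer triple of multipliers, which is what enables a clean dominated-convergence argument to finish the proof.
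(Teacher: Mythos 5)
Your proposal is essentially correct but takes a genuinely different route from the paper. You build two explicit normal-form representations (one for $w$, one for $\mathcal{N}(T_Nu)$) and show they converge to each other by two-level dominated convergence, commuting $\lim_N$ with $\sum_j$ via the uniform geometric decay in $j$. The paper instead runs a leaner $\eps/3$ triangle inequality
\[
|\jb{ w - \mathcal{N}(T_N u), \phi }|
\leq |\jb{ w  - \mathcal{N}(u^{[m]}),  \phi }|
 + |\jb{ \mathcal{N}(u^{[m]})  - \mathcal{N}(T_N u^{[m]} ), \phi }|
 + |\jb{\mathcal{N}(T_N u^{[m]} ) - \mathcal{N}(T_N u ), \phi}|,
\]
handling the first term by definition of $w$, the middle ``cutoff-removal'' term by a direct H\"older--Sobolev bound on a fixed \emph{smooth} $u^{[m]}$ followed by dominated convergence in $N$ (this is where the order of quantifiers $m$-then-$N_0(m)$ matters), and the third term by a simple Bernstein estimate for fixed $N$ \emph{combined with} the same uniform-in-$N$ Cauchy-in-$m$ claim you use, which it extracts from the dressed normal form without needing its precise structure. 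The paper's route thus only needs the qualitative observation that the dressed bounds are uniform in $N$ because $\sup_N\|m_N\|_\infty<\infty$; it never has to identify where the $m_N$ factors sit. Your route requires the sharper structural fact that, because the time derivative is substituted via the equation for $v^{[m]}$ itself, the only residual cutoff is the outer triple $m_N(n^{(1)}_1)\overline{m_N(n^{(1)}_2)}m_N(n^{(1)}_3)$ (note that this should be read as including the restriction to the resonant sets for the $\RR_1,\RR_2$ pieces). That observation is correct, and it is what makes your dominated-convergence comparison clean; but it is also extra bookkeeping that the paper's argument dodges, and, as you note, it is the main place where your proof requires more care than the original. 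The upshot is a trade-off: your approach is more explicit and gives an actual formula for $\mathcal{N}(T_Nu)-w$, while the paper's is shorter and more robust to not knowing exactly how the cutoff propagates through the iteration.
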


 \noi
 We present the proof of Proposition \ref{PROP:NON2} at the end of this section.
Assuming Proposition \ref{PROP:NON2}, we see that $u = u^\infty$
is a solution to \eqref{NLS1} in the extended sense as in Definition \ref{DEF:3}.

\medskip
It follows from  \eqref{46}, \eqref{48},  and \eqref{49}
(with $R = \|v_0\|_{L^2} + 1$ and $\wt{R} = 4R$)
that the time of existence $T$ satisfies $T \gtrsim (1+\|v_0\|_{L^2})^{-\beta}$
for some $\beta > 0$.
The Lipschitz dependence on initial data follows from \eqref{M4},
bypassing smooth approximations.

Lastly, for $s> 0$, we only have to note that all the lemmata in Sections \ref{SEC:2}
and \ref{SEC:3} hold true
even if we replace the $L^2$-norm by the $H^s$-norm.
Indeed, 
if $n^{(j)}$ is large, then 
there exists at least one of $n^{(j)}_1,  n^{(j)}_2$, and $n^{(j)}_3$
satisfies $|n^{(j)}_k| \geq \frac{1}{3}|n^{(j)}|$,
since we have
$n^{(j)} = n^{(j)}_1 - n^{(j)}_2 + n^{(j)}_3$.
Hence, in the estimates for the terms in  the $J$th generation (Lemmata \ref{LEM:N^J+1_0}, \ref{LEM:N^J+1_r},
\ref{LEM:N^J+1_1}), 
there exists at least one frequency $n^{(j)}_k$ (with some $j = 1, \dots, J$) such that 
\[\jb{n}^s \leq 3^{js} \jb{n^{(j)}_k}^s \leq 3^{Js} \jb{n^{(j)}_k}^s.\]

\noi
Note that the constant grows exponentially in $J$.
However, this exponential growth does not cause a problem
thanks to the factorial decay in the denominator 
(as seen in the proof Lemma \ref{LEM:N^J+1_0}.)

\begin{proof} [Proof of Proposition \ref{PROP:NON2}]

Let $\{T_N\}_{N\in \mathbb{N}}$ be a sequence of Fourier cutoff multipliers as in Definition \ref{DEF:1}.
Fix a test function on $\T\times (0, T)$.
Then, we need to show that 
given $\eps > 0$, there exists $N_0$ such that for all $N \geq N_0$, we have
\begin{align} \label{NON3}
|\jb{ w - \N(T_N u), \phi }| <\eps.
\end{align}

\noi
Write the left hand side of \eqref{NON3} as
\begin{align*}
|\jb{ w - \N(T_N u), \phi }|
\leq \   |\jb{ w  - \N(u^{[m]}), &  \phi }|
 + |\jb{ \N(u^{[m]})  - \N(T_N u^{[m]} ), \phi }| \notag \\
& + |\jb{\N(T_N u^{[m]} ) - \N(T_N u ), \phi}|.
\end{align*}

\noi
By definition of of $w$, we see that 
\begin{equation}  \label{NON4}
|\jb{ w - \N( u^{[m]}), \phi }| <\tfrac{1}{3}\eps
\end{equation}

\noi
for sufficiently large $m\in \mathbb{N}$. 

Next, consider the second term for {\it fixed} $m$.
By writing 
$\N(u^{[m]})  - \N(T_{N} u^{[m]})$ in a telescoping sum, 
we only consider
\begin{equation*}
|\jb{\N\big((I - T_{N_1}) u^{[m]}, u^{[m]}, u^{[m]}\big), \phi}|,
\end{equation*}

\noi
where $\mathcal{N}(u_1, u_2, u_3) = u_1 \cj{u_2} u_3$
and $I$ denotes the identity operator.
(The other terms in the telescoping sum have similar forms.)
By H\"older inequality and Sobolev embedding, we obtain
\begin{align} \label{NON5}
|\jb{\N\big((I - T_{N_1}) u^{[m]}, & u^{[m]}, u^{[m]}\big), \phi}|
 \leq \|\phi\|_{L^2_{x, T}} \| u^{[m]}\|^2_{L^\infty_{x, T}} 
\|(I - T_N) u^{[m]}   \|_{L^2_{x, T}} \notag \\
& \leq C_\phi \| u^{[m]}\|^2_{C_T H^{\frac{1}{2}+}} 
\|(I - T_N) u^{[m]}   \|_{L^2_{x, T}} \notag \\
& \leq C_{\phi, m} \|(I - T_N) u^{[m]}   \|_{L^2_{x, T}}, 
\end{align}

\noi
where $L^2_{x, T}$ denotes $L^2(\T \times [0, T])$.
Here, we used the fact that 
$\| u^{[m]}\|^2_{C_T H^{\frac{1}{2}+}}$ is a finite constant
(depending on $m$.)
By definition of the Fourier cutoff operators, 
$\big((I - T_N) u^{[m]}\big)^\wedge(n, t)$ converges pointwise in $n$ and $t$.
Then, by Dominated Convergence Theorem, there exists $N_0 = N_0(m)$
such that 
\begin{equation} \label{NON6}
\eqref{NON5} <\tfrac{1}{3}\eps.
\end{equation}

\noi
for all $N \geq N_0$.

As for the third term, 
first consider the sequence $\{\N(T_N u^{[m]})\}_{m \in \mathbb{N}}$
for each fixed $N$.
By applying the Poincar\'e-Dulac normal form reduction 
to $\{S(-t) \N(T_N u^{[m]})\}_{m \in \mathbb{N}}$
(which is basically the nonlinearity in the $v$-equation \eqref{NLS4} modulo $T_N$)
 as in Sections \ref{SEC:2} and \ref{SEC:3},
we see that $\{ \N(T_N u^{[m]})\}_{m \in \mathbb{N}}$ is a Cauchy sequence in $\mathcal{D}'(\T\times (0, T))$,
as $m \to \infty$ for each fixed $N$
since $u^{[m]}$ is Cauchy in $C([0, T];L^2)$.
Moreover, this convergence is uniform in $N$ since the multipliers for $T_N$ 
are uniformly bounded in $N$.

On the other hand, note that for fixed $N$, 
$T_N u $ is in $C_T H^\infty$, since the multiplier $m_N$ for $T_N$
has a compact support.
Thus,  
$\N(T_N u ) = T_N u |T_N u|^2$ makes sense as a function. 
Hence, for {\it fixed} $N$, we can choose $m$ large such that
\begin{align*} 
|\jb{\N(T_N u^{[m]} ) - \N(T_N u ), \phi}|
& \leq \|\phi\|_{L^4_{x, T}} \big(\|T_N u^{[m]}\|^2_{L^4_{x, T}} + \|T_N u\|_{L^4_{x, T}}^2\big) 
\|T_N u^{[m]}  - T_N u \|^2_{L^4_{x, T}} \notag \\
& \leq C_{\phi, \|u\|_{C_TL^2}} M^\frac{3}{4} T^\frac{3}{4}
\| u^{[m]}  -  u \|^2_{C_T L^2} <  \tfrac{1}{3}\eps,
\end{align*}

\noi
by Sobolev inequality, where $M = M(N)\in \mathbb{N}$ is chosen such that $\supp (m_N) \subset [-M, M]$.
i.e. $\N(T_N u^{[m]} )$ converges to $\N(T_N u )$
in $\mathcal{D}'(\T\times (0, T))$
as $m \to \infty$ for each fixed $N$.

Combining these two observations, 
we conclude that  $\N(T_N u^{[m]} )$ converges to $\N(T_N u )$
in $\mathcal{D}'(\T\times (0, T))$
as $m \to \infty$ {\it uniformly} in $N$.
Namely, 
\begin{equation} \label{NON7}
|\jb{\N(T_N u^{[m]} ) - \N(T_N u ), \phi}| <\tfrac{1}{3}\eps
\end{equation}

\noi
for all sufficiently large $m$, uniformly in $N$.
Therefore, \eqref{NON3} follows 
by first choosing $m$ sufficiently large such that  \eqref{NON4} and  \eqref{NON7} hold, 
then choosing $N_0 = N_0(m)$ such that \eqref{NON6} holds.
\end{proof}

\section{Unconditional uniqueness in $C_tH^s$, $s\geq \frac{1}{6}$}
\label{SEC:5}

In this section, we prove Theorem \ref{thm1}.
More precisely, we justify the formal computations in Sections \ref{SEC:2} and \ref{SEC:3}
on the additional regularity assumption.
Then, the Lipschitz bound implies the uniqueness.
In the following, we justify our computations,
assuming that $u$ is a solution to \eqref{NLS1} in $C([0, T]; L^3(\T))$.

\medskip
First, we make sense of the use of $\dt v_n (t) = e^{-in^2t} \ft{u}(n, t)$ in Sections \ref{SEC:2} and \ref{SEC:3}.
Suppose that $u  \in C([0, T]; L^3(\T))$.
Then, we have $u|u|^2 \in C([0, T]; L^1(\T))$,
and hence $\mathbb{P}_{\leq M} (u|u|^2) \in C([0, T]; H^\infty(\T))$
for any $M \in \mathbb{N}$,
where $\mathbb{P}_{\leq M}$ is the Dirichlet projection onto the frequencies $|n|\leq M$.
This implies $(\dx^2 \mathbb{P}_{\leq M}u)^\wedge \in C([0, T]; H^\infty(\T))$.
Hence, from the equation \eqref{NLS1}, we see that 
$(\mathbb{P}_{\leq M} \dt u)^\wedge \in C([0, T]; H^\infty(\T))$.
In particular, $\ft{ u}(n, \cdot)$ is a $C^1$-function in $t$.

\medskip
In Sections \ref{SEC:2} and \ref{SEC:3},  we switched the order of summation and the time differentiation.
For example, see \eqref{N12}.
This can be justified, also by assuming $u \in C([0, T]; L^3(\T))$.
First, we state a lemma.
\begin{lemma}\label{LEM:CONV1}
Let $\{f_n\}_{n\in \mathbb{N}}$ be a sequence in $\mathcal{D}'_t$.
Suppose that $\sum_n  f_n$ converges (absolutely) in $\mathcal{D}'_t$.
Then, $\sum_n\dt f_n$ converges (absolutely) in $\mathcal{D}'_t$
and $\dt (\sum_n f_n ) = \sum_n \dt f_n$.
\end{lemma}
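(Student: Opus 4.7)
The plan is to reduce the statement to the sequential continuity of distributional differentiation, which is built into the definition of $\partial_t$ on $\mathcal{D}'_t$. Recall that convergence of $\sum_n g_n$ in $\mathcal{D}'_t$ means that for every test function $\psi \in \mathcal{D}(\mathbb{R})$, the numerical series $\sum_n \langle g_n,\psi\rangle$ converges; absolute convergence is the analogous statement for $\sum_n |\langle g_n,\psi\rangle|$. Since $\partial_t$ on $\mathcal{D}'_t$ is defined by duality via $\langle \partial_t g,\psi\rangle = -\langle g, \partial_t \psi\rangle$, and since $\partial_t \psi$ is again a test function whenever $\psi$ is, the whole argument is essentially a one-line duality check.

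First I would unfold the hypothesis: for an arbitrary test function $\phi \in \mathcal{D}(\mathbb{R})$, apply the (absolute) convergence of $\sum_n f_n$ to the test function $\partial_t \phi$ to obtain that $\sum_n \langle f_n, \partial_t \phi\rangle$ converges absolutely in $\mathbb{C}$. By the duality definition of the distributional derivative, this says exactly that
\[
\sum_n \langle \partial_t f_n, \phi\rangle = -\sum_n \langle f_n, \partial_t \phi\rangle
\]
converges absolutely. Since $\phi$ was arbitrary, $\sum_n \partial_t f_n$ converges (absolutely) in $\mathcal{D}'_t$.

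To identify the limit, I would use the hypothesis once more to pass the sum across the pairing:
\[
-\sum_n \langle f_n, \partial_t \phi\rangle = -\Big\langle \sum_n f_n,\, \partial_t \phi \Big\rangle = \Big\langle \partial_t \Big(\sum_n f_n\Big),\, \phi \Big\rangle,
\]
where the second equality is again the definition of $\partial_t$ on $\mathcal{D}'_t$. Since $\phi \in \mathcal{D}(\mathbb{R})$ was arbitrary, we conclude $\partial_t \big(\sum_n f_n\big) = \sum_n \partial_t f_n$ in $\mathcal{D}'_t$, as desired.

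There is no real obstacle here: the only thing to verify is that the bookkeeping works out, and it does because the pairing with any single test function is a continuous linear functional on $\mathcal{D}'_t$, so it commutes with the convergent (in fact absolutely convergent) series. The lemma is then applied in Section \ref{SEC:5} to justify interchanges such as the one in \eqref{N12}, where $\partial_t$ is pulled outside an infinite sum of multilinear Fourier expressions that have already been shown to converge in $\mathcal{D}'_t$ via the estimates of Sections \ref{SEC:2} and \ref{SEC:3}.
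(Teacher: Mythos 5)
Your proof is correct and follows essentially the same duality argument as the paper: pair $\sum_n \partial_t f_n$ against $\phi$, convert to $-\sum_n \langle f_n, \partial_t \phi\rangle$ by definition of the distributional derivative, and use the hypothesis applied to the test function $\partial_t\phi$. Your write-up is slightly more streamlined and handles the ``absolutely'' qualifier more explicitly, but the underlying idea is identical.
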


\begin{proof}
Recall that a sequence of distribution $g_n$ is said to converge to a distribution $g$
if, for all $\phi \in \mathcal{D}$, we have $\jb{g_n, \phi} \to \jb{g, \phi}$.
Thus, we have 
\begin{align*}
\Big\langle\sum_{n= 1}^\infty f'_n, \phi\Big\rangle
= \lim_{N \to \infty}
\Big\langle\sum_{n= 1}^N f'_n, \phi\Big\rangle
= \lim_{N \to \infty}
\sum_{n= 1}^N \jb{f'_n, \phi},
\end{align*}

\noi
if the right hand side exists.
By the definition of a distributional derivative, we have
\begin{align*}
 \lim_{N \to \infty}
\sum_{n= 1}^N \jb{f'_n, \phi}
= - \lim_{N \to \infty}
\sum_{n= 1}^N \jb{f_n, \phi'},
%
\end{align*}

\noi
where the right hand side converges since $\sum_n f_n$ converges in $\mathcal{D}'$.
Hence, $\sum_n f'_n$ converges in $\mathcal{D}'$.
The second claim follows once we note the following.
\begin{align*}
- \lim_{N \to \infty}
\sum_{n= 1}^N \jb{f_n, \phi'}
= - \lim_{N \to \infty}
\Big\langle\sum_{n= 1}^N f_n, \phi'\Big\rangle
= - \Big\langle\sum_{n= 1}^\infty f_n, \phi'\Big\rangle
= \Big\langle \dt \Big(\sum_{n= 1}^\infty f_n\Big), \phi\Big\rangle,
\end{align*}

\noi
where the second equality follows from the definition of $\sum_n f_n$
as a distributional limit.
\end{proof}

Now, we consider \eqref{N12} for fixed $n$.
Then, we want to apply Lemma \ref{LEM:CONV1} to a sequence
\[ \{ a_{n, n_1, n_2}(t) \}:=\bigg\{  \frac{e^{- i \Phi(\bar{n})t } }{2(n - n_1) (n - n_3)}
v_{n_1}(t) \cj{v}_{n_2}(t)v_{n_3}(t)\bigg\}, \]

\noi
where $n = n_1 - n_2 + n_3$ and $(n, n_1, n_2, n_3) \in A(n)^c$.
(Here, $a_{n, n_1, n_2}$ depends on several indices with a restriction
(i.e. on $A_N(n)^c$), but we can arrange them to be a sequence.)

By Lemma \ref{LEM:N21}, 
$\sum_{A_N(n)^c} a_{n, n_1, n_2} $ converges absolutely and is bounded in $C([0, T])$
(for fixed $n$.)
In particular, for each $n_1$ and $n_2$, 
$a_{n, n_1, n_2}$ is a distribution on $[0, T]$.
By Lemma \ref{LEM:CONV1}, we have
\begin{align}
 \dt \bigg[
 \sum_{ A_N(n)^c} 
& \frac{e^{- i \Phi(\bar{n})t } }{- i \Phi(\bar{n})}
v_{n_1}  \cj{v}_{n_2}v_{n_3}\bigg]  
=   \sum_{ A_N(n)^c} 
 \dt \bigg[
\frac{e^{- i \Phi(\bar{n})t } }{- i \Phi(\bar{n})}
v_{n_1} \cj{v}_{n_2}v_{n_3}\bigg]   \notag \\
& =   \sum_{ A_N(n)^c} 
\dt\bigg( \frac{e^{- i \Phi(\bar{n})t } }{-i\Phi(\bar{n})}\bigg)
v_{n_1} \cj{v}_{n_2}v_{n_3} 
+   \sum_{A_N(n)^c} 
\frac{e^{- i \Phi(\bar{n})t } }{-i\Phi(\bar{n})}
\dt\big( v_{n_1} \cj{v}_{n_2}v_{n_3}\big).  \label{CONV2}
\end{align}

\noi
In the second equality, we applied the product rule.
It is in this step that we needed the additional regularity
$u \in C([0, T]; L^3)$
so that $v_n$ is continuously differentiable and the product rule is applicable.
A similar argument justifies the exchange of the sum and the time differentiation 
in the $J$th generation.  We omit the details.

\medskip
Indeed, if we assume that $u \in C([0, T]; L^3)$, we can say more on this issue.
First, note that from \eqref{NLS4}, we have $\dt v_n = e^{-in^2t} \N(S(-t) v)_n = e^{-in^2t} \N(u)_n$.
Then, 
\begin{equation} \|\dt v_n\|_{C_T \ell^\infty_n} =  \|\N(u)_n\|_{C_T \ell^\infty_n}
\leq \|\N(u)\|_{C_T L^1_x} \leq \|u\|^3_{C_TL^3_x}.
\label{CONV3}
\end{equation}

\noi
Hence, $\dt v_n \in C([0, T]; \ell^\infty_n)$.
In the following, fix $n$.
Then, by a variant of Lemma \ref{LEM:N21}, the second term on the right hand side of \eqref{CONV2}
is estimated as 
\begin{align}
\bigg|   \sum_{A_N(n)^c} 
\frac{e^{- i \Phi(\bar{n})t } }{-i\Phi(\bar{n})}
\dt\big( v_{n_1} \cj{v}_{n_2}v_{n_3}\big)\bigg|
\lesssim \|\dt v_n\|_{C_T \ell^\infty_n}\|v\|_{C_T L^2}^2
\leq \|u \|_{C_T L^3}^3\|v\|_{L^2}^2,
\end{align}

\noi
where the convergence is absolute and uniform (in $t$.)
The first term in \eqref{CONV2} can be written as $ e^{-in^2t} \N(u)_n$
and thus also converges in view of \eqref{CONV3}.
(Here, the convergence is not absolute, 
but uniform in $t$.) 
Therefore, we can simply switch the sum and the time differentiation 
(i.e. the first equality in \eqref{CONV2}) in classical sense.
The argument for the $J$th generation is similar and we omit the details.

\medskip

Lastly, the regularity $C([0, T]; L^3)$ was sufficient to justify the formal
computations in Sections \ref{SEC:2} and \ref{SEC:3}.
However, in order to prove the lemmata, which are proven on the Fourier side,
we need a $L^2$-based space of the same scaling, namely $C([0, T]; H^\frac{1}{6})$.

\medskip

\noindent {\bf Acknowledgments:} The authors would like to thank Prof.~Nicolas Burq,  Prof.~Herbert Koch, 
and Prof. Nader Masmoudi for the helpful comments and discussions.

\end{document}